\newcommand{\bfs}{\boldsymbol}
\newtheorem{theorem}{Theorem}[section]
\newtheorem{corollary}[theorem]{Corollary}
\newtheorem{lemma}[theorem]{Lemma}
\newtheorem{proposition}[theorem]{Proposition}
\newtheorem{remark}[theorem]{Remark}
\numberwithin{equation}{section}
\mathchardef\mhyphen="2D
\begin{document}

\title[$L^{p}$-solvability of nonlocal system]{A note on the $L^{p}$-solvability of a strongly-coupled nonlocal system of equations}

\author[Miriam Abbate]
{Miriam Abbate}
\address{Miriam Abbate, Department of Mathematics, University of Tennessee Knoxville, Knoxville, TN 37996}
\email{mabbate3@vols.utk.edu}
\author[Tadele Mengesha]
{Tadele Mengesha}
\address{Tadele Mengesha, Department of Mathematics,  University of Tennessee Knoxville, Knoxville, TN 37996}
\email{mengesha@utk.edu}

\everymath{\displaystyle}

\maketitle
\begin{abstract}
The goal of this paper is to study the $L^p$-solvability of the strongly-coupled nonlocal system 
\begin{equation*}
     \mathbb{L} \bm{u} (\bm{x})  + \lambda \bm{u}(\bm{x})= \bm{f}(\bm{x}) \quad \text{in $\mathbb{R}^{d}$ }
\end{equation*}
where $\mathbb{L}$ is a linear nonlocal coupled vector-valued operator associated with a kernel $K$ comparable to $|\bm{y}|^{-(d+2s)}$ for $s \in (0,1)$, satisfying certain ellipticity and cancellation conditions. 
For any $\bm{f} \in [L^p(\mathbb{R}^d)]^d$, $1< p < \infty$, the existence of a unique strong solution $\bm{u} \in [H^{2s,p}(\mathbb{R}^d)]^d$ is proved via the method of continuity. To apply this method, we establish the continuity of the operator $\mathbb{L}$ and the necessary \textit{a priori} estimates. 
These are obtained through the study of the corresponding parabolic system. The proof strategy follows and extends recent ideas developed for the scalar setting, combining commutator estimates, Sobolev embeddings, a level set estimates and a bootstrap argument. 
\end{abstract}

\keywordsname: $L^{p}$-estimates, strongly-coupled nonlocal systems,  optimal regularity, Peridynamics 
\section{Introduction and statement of the main result}
\subsection{Motivation and set up of the problem}
In this work, we investigate the $L^{p}$-solvability of the strongly-coupled system of linear equations:
\begin{equation}
\label{system}
     \mathbb{L} \bm{u} (\bm{x})  + \lambda \bm{u}(\bm{x})= \bm{f}(\bm{x}) \quad \text{in $\mathbb{R}^{d}$ }
\end{equation}
where $\mathbb{L}$ is a nonlocal vector-valued operator defined as
\begin{align*}
- \mathbb{L}\bm{u}(\bm{x}) =  \int_{\mathbb{R}^d} \left( \frac{\bm{y} \otimes \bm{y}}{|\bm{y}|^2} \right) \left( \bm{u}(\bm{x} + \bm{y}) - \bm{u}(\bm{x}) - D[\bm{u}](\bm{x}) \bm{y} \chi^{(s)}(\bm{y}) \right) K(\bm{y}) \ d\bm{y}. 
\end{align*}
The notations we used to define the operator will be explained later along with precise conditions on $K$ but for now it suffices to say that $K(\bm{y}) $ is bounded from above and below by a positive constant multiple of the  fractional kernel $|\bm{y}|^{-(d+ 2s)}$ for  $s\in (0, 1)$. Equation \eqref{system} has been studied in \cite{MENGESHA2020123919}.   In particular, corresponding to $K$ in  a broad class of kernels,  existence of a unique solution is shown for the case $p=2$. In addition, in the special case of $K(\bm{y}) = |\bm{y}|^{-(d+ 2s)}$, using a Fourier multiplier theorem,  it is shown that \eqref{system} has a unique solution in the Sobolev space $[H^{2s, p}(\mathbb{R}^{d})]^{d}$ for any $1<p<\infty,$ see below for the definition of the space.   In this paper, we extend this result to general $K$ that is comparable to $|\bm{y}|^{-(d+ 2s)}$. Namely, we demonstrate  that for any $\lambda >0$, $p\in (1, \infty)$, and $ {\bm f}\in [L^{p}(\mathbb{R}^{d})]^{d}$, the system \eqref{system} has  a unique  strong solution $\bm{u}$ that belongs to $[H^{2s, p}(\mathbb{R}^{d})]^{d}$.   

As discussed in \cite{MENGESHA2020123919} one uses the solvability of \eqref{system} together with the theory of semigroups to establish well posedness of the wave-type system of time dependent problem 
\begin{equation}
\label{system2}
\begin{cases}
    \partial_{tt} \bm{u}(t,\bm{x}) + \mathbb{L} \bm{u} (t,\bm{x}) = \bm{f}(t,\bm{x}), &\qquad (t,\bm{x}) \in [0,T] \times \mathbb{R}^d, \\
    \bm{u}(0,\bm{x}) = \bm{u}_0(\bm{x}), &\qquad \bm{x} \in \mathbb{R}^d, \\
    \partial_t\bm{u}(0, \bm{x}) = \bm{v}_0(\bm{x}), &\qquad \bm{x} \in \mathbb{R}^d. 
\end{cases}
\end{equation}
The above wave-type system of evolution equations is motivated by the equation of motion in linearized bond-based peridynamics \cite{silling2010linearized,silling2000reformulation}. In this framework, an elastic material is modeled as a mass-spring system, where interactions occur between all pairs of points $\bm{x}$ and $\bm{y}$ within the body via the bond vector $\bm{x} - \bm{y}$. Under external loading $\bm{f}$, the material undergoes a displacement $\bm{u}$, such that the deformed position is given by $\bm{x} + \bm{u}(t, \bm{x})$. In the uniform small-strain theory, the bond strain and the internal linearized force density function are expressed by the linearized strain 
$
(\bm{u}(t, \bm{x}) - \bm{u}(t, \bm{y})) \cdot \frac{\bm{x} - \bm{y}}{|\bm{x} - \bm{y}|} 
$ and $\int_{\mathbb{R}^d}  \left( (\bm{u}(t,\bm{y})-\bm{u}(t,\bm{x}) ) \cdot \frac{\bm{y} - \bm{x}}{|\bm{y} - \bm{x}|} \right) \frac{\bm{y} - \bm{x}}{|\bm{y} - \bm{x}|} \  \rho(\bm{x}, \bm{y}) \ d\bm{y}$, respectively,  
where the kernel $\rho : \mathbb{R}^d \times \mathbb{R}^d \to \mathbb{R}$ encodes the strength and range of interactions among the material points. 
The peridynamic balance of forces in this setting gives rise to the strongly-coupled system of equations
\begin{equation}
\label{nonstst}
\partial_{tt} \bm{u}(t,\bm{x}) +  \int_{\mathbb{R}^d}  \left( (\bm{u}(t,\bm{x}) - \bm{u}(t, \bm{y})) \cdot \frac{\bm{x} - \bm{y}}{|\bm{x} - \bm{y}|} \right) \frac{\bm{x} - \bm{y}}{|\bm{x} - \bm{y}|} \  \rho(\bm{x}, \bm{y}) \ d\bm{y} = \bm{f}(t,\bm{x}),
\end{equation}
The dependence of $\rho$ on $\bm{x}$ and $\bm{y}$ can vary. For example, in the case of homogeneous isotropic
materials, $\rho$ depends only on the relative distance $|\bm{x} - \bm{y}|$. More general kernels $\rho$ have the potential to model
heterogeneous and anisotropic long-range interactions. We focus on models that use a translation invariant $\rho(\bm{x}, \bm{y}) = K(\bm{x}-\bm{y})$ where $K$ is not necessarily even.  In this case,  the internal linearized force density function can be rewritten as 
\[
\int_{\mathbb{R}^d}  \left( (\bm{u}(t, \bm{y} +\bm{x})-\bm{u}(t,\bm{x}) ) \cdot \frac{\bm{y}}{|\bm{y}|} \right) \frac{\bm{y}}{|\bm{y}|} \  K(\bm{y}) \ d\bm{y} = \int_{\mathbb{R}^d} \left( \frac{\bm{y} \otimes \bm{y}}{|\bm{y}|^2} \right) \left( \bm{u}(t, \bm{x} + \bm{y}) -\bm{u}(t,\bm{x}) \right) K(\bm{y}) \ d\bm{y}. 
\]
In the event that $K\in L^{1}(\mathbb{R}^{d})$, the above integral converges absolutely for compactly supported smooth functions.  If $K$ is not integrable, we must modify the integrand to get a well defined quantity. To that end, we work with a class of kernels that are fractional-type. That is, we assume that there are constants $\alpha_1, \alpha_2 \in (0, \infty)$ such that for any $s\in (0, 1)$
\begin{equation}\label{Ellipticity}
(1-s)\alpha_1 \leq K(\bm{y})|\bm{y}|^{-(d+2s)} \leq (1-s)\alpha_2 \quad \forall \ \bm{y}\neq {\bfs 0}. 
\end{equation}
Taking advantage of the presence of the even rank-one matrix $ \frac{\bm{y} \otimes \bm{y}}{|\bm{y}|^2} $, we modify the above integral  to obtain the operator 
\begin{align}\label{leading-operator}
- \mathbb{L}\bm{u}(\bm{x}) =  \int_{\mathbb{R}^d} \left( \frac{\bm{y} \otimes \bm{y}}{|\bm{y}|^2} \right) \left( \bm{u}(\bm{x} + \bm{y}) - \bm{u}(\bm{x}) - D[\bm{u}](\bm{x}) \bm{y} \chi^{(s)}(\bm{y}) \right) K(\bm{y}) \ d\bm{y}. 
\end{align}
with $D[\bm{u}](\bm{x})$ denoting the symmetric part of the gradient:
$
D[\bm{u}](\bm{x}) = \frac{1}{2} \left(\nabla \bm{u}(\bm{x}) + \nabla \bm{u}(\bm{x})^T\right), 
$ 
and 
\[
\chi^{(s)}(\bm{y}) = 
\begin{cases}
0 & \text{if } s \in (0, 1/2), \\
\mathbbm{1}_{B_1}(\bm{y}) & \text{if } s = 1/2, \\
1 & \text{if } s \in (1/2, 1), 
\end{cases}
\]
where $B_{r}$ denotes a ball of radius $r$ centered at the origin.   
For $s=\frac{1}{2}$, we also assume that 
\begin{equation}\label{cancellation}
\int_{\partial B_r} \bm{y} K(\bm{x}, \bm{y}) \ dS_r(\bm{y}) = {\bfs 0},
\end{equation}
which ensures that, in the definition of $\chi^{(s)} (\bm{y}) $, $\mathbbm{1}_{B_1}(\bm{y})$  can be replaced by $\mathbbm{1}_{B_r}(\bm{y})$ for any $r>0$. 
With this new modification, it is clear that if $K$ satisfies \eqref{Ellipticity} and \eqref{cancellation}, the integral converges absolutely for smooth $\bm{u}$. In the event $K$ is even, \eqref{cancellation} is automatically satisfied. For $K(\bm{y}) = c(d,s)|\bm{y}|^{-(d+2s)},$ the operator in \eqref{leading-operator} becomes 
\begin{equation}\label{fractional-Lame}
- (-\mathring{\Delta}^{s})\bm{u}(\bm{x}) = c(d,s)\int_{\mathbb{R}^d} {1 \over |\bm{y}|^{d+2s}}\left( \frac{\bm{y} \otimes \bm{y}}{|\bm{y}|^2} \right) \left( \bm{u}(\bm{x} + \bm{y}) - \bm{u}(\bm{x}) - D[\bm{u}](\bm{x}) \bm{y} \chi^{(s)}(\bm{y}) \right)  \ d\bm{y}
\end{equation}
which is considered the fractional analog of the operator Lam\'e in linearized elasticity, \cite{MD-2014}.

\subsection{Statement of the main results} 
The main objective of this work is to study the $L^{p}$-solvability of \eqref{system} with the leading operator defined in \eqref{leading-operator} under assumptions \eqref{Ellipticity} and \eqref{cancellation}.  
Before stating the main results, we introduce the necessary notations to define the relevant function spaces. The Fourier transform $\mathcal{F}$ of a vector field $\bm{f}$ is defined by
\[
\mathcal{F}\bm{f}(\bm{\xi}) = \widehat{\bm{f}}(\bm{\xi}) := \int_{\mathbb{R}^d} \bm{f}(\bm{x}) \ e^{-2\pi i \bm{x} \cdot \bm{\xi}} \ d\bm{x}.
\]
Let $[\mathcal{S}]^d$ denote the space of $\mathbb{R}^d$-valued Schwartz vector fields, and let $[\mathcal{S}']^d$ denote its dual. For $p \in (1, \infty)$ and $s \in (0,1)$, we define the Bessel potential space
\[
[H^{2s,p}(\mathbb{R}^d)]^d := \left\{ \bm{u} \in [\mathcal{S}']^d \Bigg|\ \left((1 + 4\pi^2 |\bm{\xi}|^2)^s \widehat{\bm{u}}\right)^\vee \in [L^p(\mathbb{R}^d)]^d \right\},
\]
with norm
\[
\|\bm{u}\|_{[H^{2s,p}(\mathbb{R}^d)]^d} := \left\| \left((1 + 4\pi^2 |\bm{\xi}|^2)^s \widehat{\bm{u}}\right)^\vee \right\|_{[L^p(\mathbb{R}^d)]^d}.
\]
The corresponding homogeneous space is given by
\[
[\dot{H}^{2s,p}(\mathbb{R}^d)]^d := \left\{ \bm{u} \in [\mathcal{S}']^d \Bigg|\ \left((4\pi^2 |\bm{\xi}|^2)^s \widehat{\bm{u}}\right)^\vee =: (-\Delta)^s \bm{u} \in [L^p(\mathbb{R}^d)]^d \right\},
\]
with associated seminorm
\[
[\bm{u}]_{[H^{2s,p}(\mathbb{R}^d)]^d} := \|(-\Delta)^s \bm{u}\|_{[L^p(\mathbb{R}^d)]^d}.
\]
We are now ready to state our main result.

\begin{theorem}
\label{mainthr}
Let $s \in (0,1)$, $\lambda > 0$, and $p \in (1,\infty)$. Suppose that the kernel $K$ satisfies \eqref{Ellipticity} and \eqref{cancellation}. Then the operator $\mathbb{L}$, defined in \eqref{leading-operator}, has a unique continuous extension from $[H^{2s,p}(\mathbb{R}^d)]^d$ to $[L^p(\mathbb{R}^d)]^d$. Moreover, for every $\bm{f} \in [L^p(\mathbb{R}^d)]^d$, there exists a unique solution $\bm{u} \in [H^{2s,p}(\mathbb{R}^d)]^d$ to the equation
\[
\mathbb{L}\bm{u} + \lambda \bm{u} = \bm{f} \quad \text{in } \mathbb{R}^d,
\]
which satisfies the estimates
\begin{equation}
\label{est1}
\|(-\Delta)^s \bm{u}\|_{[L^p(\mathbb{R}^d)]^d} + \lambda \|\bm{u}\|_{[L^p(\mathbb{R}^d)]^d} \leq N \|\bm{f}\|_{[L^p(\mathbb{R}^d)]^d},
\end{equation}
for a constant $N> 0$  that depends only on $d$, $s$, $p$, $\alpha_1$, $\alpha_2$. As a consequence, 
\begin{equation}
\label{est2}
\|\bm{u}\|_{[H^{2s,p}(\mathbb{R}^d)]^d} \leq C \|\bm{f}\|_{[L^p(\mathbb{R}^d)]^d},
\end{equation}
for $C > 0$  depending only on $d$, $s$, $p$, $\alpha_1$, $\alpha_2$ and $\lambda$.
\end{theorem}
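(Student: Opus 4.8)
The plan is to obtain both the existence statement and the estimate \eqref{est1} by the \emph{method of continuity}, connecting $\mathbb{L}$ to the constant-kernel operator $\mathbb{L}_0$ obtained from \eqref{leading-operator} with $K(\bm{y})=(1-s)\alpha_1|\bm{y}|^{-(d+2s)}$ (cf.\ \eqref{fractional-Lame}), for which solvability in $[H^{2s,p}(\mathbb{R}^d)]^d$ together with the analogue of \eqref{est1} is already known from \cite{MENGESHA2020123919} via the Fourier multiplier theorem. For $\tau\in[0,1]$ set $\mathbb{L}_\tau=(1-\tau)\mathbb{L}_0+\tau\,\mathbb{L}$, which is the operator \eqref{leading-operator} associated with $K_\tau=(1-\tau)(1-s)\alpha_1|\bm{y}|^{-(d+2s)}+\tau K$; since $K_\tau$ still satisfies \eqref{Ellipticity} and \eqref{cancellation} with constants independent of $\tau$, the analysis below applies uniformly along the path. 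Granting (i) that each $\mathbb{L}_\tau$ extends to a bounded operator $[H^{2s,p}(\mathbb{R}^d)]^d\to[L^p(\mathbb{R}^d)]^d$ and (ii) the $\tau$-uniform a priori bound
\[
\|(-\Delta)^s\bm{u}\|_{[L^p(\mathbb{R}^d)]^d}+\lambda\|\bm{u}\|_{[L^p(\mathbb{R}^d)]^d}\le N\,\|\mathbb{L}_\tau\bm{u}+\lambda\bm{u}\|_{[L^p(\mathbb{R}^d)]^d},
\]
the standard continuity argument (openness from perturbation of a bounded bijection, closedness from the uniform estimate) propagates bijectivity of $\mathbb{L}_0+\lambda$ to $\mathbb{L}_1+\lambda=\mathbb{L}+\lambda$; this yields existence, while uniqueness is immediate from the a priori bound, and \eqref{est2} follows from \eqref{est1} and the norm equivalence $\|\cdot\|_{[H^{2s,p}(\mathbb{R}^d)]^d}\sim\|(-\Delta)^s\cdot\|_{[L^p(\mathbb{R}^d)]^d}+\|\cdot\|_{[L^p(\mathbb{R}^d)]^d}$.

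For the continuity in (i), I would first prove $\|\mathbb{L}\bm{u}\|_{[L^p(\mathbb{R}^d)]^d}\lesssim\|\bm{u}\|_{[H^{2s,p}(\mathbb{R}^d)]^d}$ for Schwartz $\bm{u}$ and extend by density. Splitting the integral in \eqref{leading-operator} at $|\bm{y}|=1$: on $|\bm{y}|\ge1$ the kernel is integrable by \eqref{Ellipticity}, so that piece is a finite combination of bounded convolutions plus, when $\chi^{(s)}\neq0$, a first-order term dominated by $\|D[\bm{u}]\|_{[L^p]^d}\lesssim\|\bm{u}\|_{[H^{2s,p}]^d}$; on $|\bm{y}|<1$ one uses the second-order Taylor remainder $\bm{u}(\bm{x}+\bm{y})-\bm{u}(\bm{x})-D[\bm{u}](\bm{x})\bm{y}\chi^{(s)}(\bm{y})$ together with the comparison $K_\tau\sim|\bm{y}|^{-(d+2s)}$ and the cancellation \eqref{cancellation} to bound this piece, up to lower-order contributions, by the constant-kernel operator $\mathbb{L}_0$, which is bounded $[H^{2s,p}(\mathbb{R}^d)]^d\to[L^p(\mathbb{R}^d)]^d$ by the multiplier theorem of \cite{MENGESHA2020123919}.

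The a priori estimate (ii) is the substance of the argument, obtained by passing through the parabolic system $\partial_t\bm{v}+\mathbb{L}_\tau\bm{v}=\bm{g}$, $\bm{v}(0,\cdot)=\bm{0}$: once one has the space-time maximal regularity bound $\|(-\Delta)^s\bm{v}\|_{L^p}+\|\partial_t\bm{v}\|_{L^p}\le N\|\bm{g}\|_{L^p}$, the elliptic estimate follows by taking $\bm{g}(t,\bm{x})=e^{\lambda t}\bm{f}(\bm{x})$-type data and using the Laplace transform $\bm{u}=\int_0^\infty e^{-\lambda t}\bm{v}(t,\cdot)\,dt=(\mathbb{L}_\tau+\lambda)^{-1}\bm{f}$. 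For the parabolic bound, the case $p=2$ is direct: Plancherel in $\bm{x}$ reduces the system to an ODE for $\widehat{\bm{v}}(t,\bm{\xi})$ whose matrix symbol is, by \eqref{Ellipticity} and the rank-one weight $\bm{y}\otimes\bm{y}/|\bm{y}|^2$, comparable to $|\bm{\xi}|^{2s}\,\mathrm{Id}$, and a Gronwall argument closes it. For general $p$ one localizes with a cutoff $\phi$, so that $\partial_t(\phi\bm{v})+\mathbb{L}_\tau(\phi\bm{v})=\phi\bm{g}+[\mathbb{L}_\tau,\phi]\bm{v}$; the commutator $[\mathbb{L}_\tau,\phi]\bm{v}$, because $|\phi(\bm{x}+\bm{y})-\phi(\bm{x})|\lesssim\min(|\bm{y}|,1)$ together with \eqref{cancellation} kills one order of the singularity, is an operator of order strictly less than $2s$ and is thus controlled, via Sobolev embedding, by intermediate Bessel-potential norms of $\bm{v}$. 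Combining the $p=2$ bound for the localized equation with the regularity of $\mathbb{L}_\tau$-caloric functions produces a level-set (good-$\lambda$/Calderón--Zygmund) inequality for $(-\Delta)^s\bm{v}$ on parabolic cylinders, hence an $L^q$ bound for some $q>2$; a bootstrap then upgrades the integrability exponent by a fixed amount at each step — the step size being governed by the order gained in the commutator and the Sobolev exponent — reaching every $p>2$ in finitely many steps, and the range $1<p<2$ is covered by duality, since the adjoint of $\mathbb{L}_\tau$ has kernel $K_\tau(-\bm{y})$, which again satisfies \eqref{Ellipticity} and \eqref{cancellation}.

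The main obstacle is precisely the general-$p$ parabolic estimate: running the level-set argument with a kernel $K$ carrying \emph{no regularity}, while keeping the strongly-coupled vector-valued structure under control, and verifying that both the commutators $[\mathbb{L}_\tau,\phi]\bm{v}$ and the perturbation away from the constant-kernel model are genuinely of order $<2s$ \emph{uniformly in $\tau$}, so that the bootstrap terminates and the resulting constant $N$ is independent of $\tau$ (and, after the Laplace transform, exhibits the sharp $\lambda$-scaling in \eqref{est1}). By comparison, the $p=2$ estimate, the continuity of $\mathbb{L}$, and the method-of-continuity wrap-up are routine.
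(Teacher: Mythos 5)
Your overall strategy matches the paper's: method of continuity from a constant-kernel reference operator $\mathbb{L}_0$, with the two needed inputs (continuity of $\mathbb{L}$ and a $\tau$-uniform a priori bound) supplied by a parabolic maximal-regularity estimate, which in turn is proved at $p=2$ by Plancherel, bootstrapped to general $p$ via a level-set/good-$\lambda$ argument on parabolic cylinders, and closed for $1<p<2$ by duality. This is essentially the paper's outline, and most of your sketch of item (ii) would survive under scrutiny, modulo technical details (the paper handles the parabolic-to-elliptic passage by testing with $\eta(t/T)\bm{v}(\bm{x})$ and sending $T\to\infty$ rather than via a Laplace transform, which avoids the issue of making sense of the semigroup on $[0,\infty)$ and yields the $\lambda$-dependence in \eqref{est1} transparently; and the paper runs the level-set estimate on $\mathbb{L}\bm{u}$ for solutions of the $(-\Delta)^s$-driven parabolic system, which requires commuting $\mathbb{L}$ with $(-\Delta)^s$ and with mollification, rather than on $(-\Delta)^s\bm{v}$ for the $\mathbb{L}_\tau$-driven system).

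The genuine gap is in your proposed \emph{direct} proof of (i), the bound $\|\mathbb{L}\bm{u}\|_{[L^p]^d}\lesssim\|\bm{u}\|_{[H^{2s,p}]^d}$. You propose to estimate the near-origin piece $\int_{|\bm{y}|<1}\frac{\bm{y}\otimes\bm{y}}{|\bm{y}|^2}\bigl(\bm{u}(\bm{x}+\bm{y})-\bm{u}(\bm{x})-D[\bm{u}](\bm{x})\bm{y}\chi^{(s)}(\bm{y})\bigr)K(\bm{y})\,d\bm{y}$ by invoking the pointwise comparison $K(\bm{y})\sim|\bm{y}|^{-(d+2s)}$ and \eqref{cancellation} to ``bound this piece by the constant-kernel operator $\mathbb{L}_0$.'' This step is not valid: a two-sided pointwise bound on a singular kernel does \emph{not} transfer $L^p$-boundedness from one operator to the other, since the integrals are not absolutely convergent at the relevant level of regularity and depend on cancellation that $K$ need not share with $|\bm{y}|^{-(d+2s)}$ (one cannot simply dominate and apply Minkowski — for $K$ satisfying only \eqref{Ellipticity}, the term-by-term estimate $\int_{|\bm{y}|<1}\|\bm{u}(\cdot+\bm{y})-\bm{u}(\cdot)\|_{L^p}\,|K(\bm{y})|\,d\bm{y}$ diverges). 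The cancellation condition \eqref{cancellation} only controls the first moment on spheres when $s=1/2$; it does not allow you to replace $K$ by $|\bm{y}|^{-(d+2s)}$ up to lower order. Indeed, the paper stresses that (i) is nontrivial even in the scalar case, and proves it \emph{indirectly}: from part (A) of the parabolic estimate (Theorem \ref{timedep}), applied to $\bm{u}_T=\eta(t/T)\bm{v}(\bm{x})$ with $\lambda=0$ and $T\to\infty$, one deduces $\|\mathbb{L}\bm{v}\|_{[L^p]^d}\le N\|(-\Delta)^s\bm{v}\|_{[L^p]^d}$ (Corollary \ref{Lbdd}). You already have all the machinery to argue this way; the fix is to derive (i) from the parabolic estimate exactly as you derive (ii), rather than attempting a direct singular-integral bound. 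Once you make that repair, your last paragraph's assessment that ``the continuity of $\mathbb{L}$ is routine'' should also be revised, since it relies on the same nontrivial parabolic input as the a priori estimate.
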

\begin{remark}
Estimate \eqref{est2} follows directly from \eqref{est1}, since
\[
\|\bm{u}\|_{[H^{2s,p}(\mathbb{R}^d)]^d} \approx \|\bm{u}\|_{[L^p(\mathbb{R}^d)]^d} + \|(-\Delta)^s \bm{u}\|_{[L^p(\mathbb{R}^d)]^d}.
\]
\end{remark}
For $p=2$, solvability of \eqref{system} is established in \cite{Du_Zhou_2011}.  In that work, the $H^{2s,2}$ regularity of appropriately-defined weak solutions of (1.7) with $\lambda = 1$ and $K$  positive and radially symmetric was obtained via the Fourier transform. See also \cite{Dang2025Regularity} for a similar regularity result for problems in the periodic setting.  Using the method of continuity, the result was extended in \cite{MENGESHA2020123919} to  hold for operators with kernel $K$ that are only translation invariant and can possibly vanish on a substantial set, specifically outside of a double cone whose apex is at the origin.  

For $p\neq 2$, the $L^{p}$-solvability of the nonlocal system \eqref{system} is relatively unstudied, and to the best of our knowledge, the $L^{p}$-solvability in its full generality is not available in the literature.  The only result we are aware of is in \cite{MENGESHA2020123919} where $L^{p}$-solvability is established for the special choice $K(\bm{y}) = c(d, s)|\bm{y}|^{-(d + 2s)}$, with the corresponding operator being $-(-\mathring{\Delta})^{s}$, defined in \eqref{fractional-Lame}. The Fourier matrix symbol of $-(-\mathring{\Delta})^{s}$  can be computed explicitly as \begin{equation}\label{matrix-symbol-forfrac}\mathbb{M}^{\Delta}({\bfs \xi}) = (2\pi |\bm{\xi}|)^{2s} \left(\ell_1 \mathbb{I} + \ell_{2} {{\bfs \xi} \otimes {\bfs \xi}\over | \bm {\xi}|^{2}},  \right)\end{equation} for some positive constants  $\ell_1, \ell_2$ and $\ell_1 \neq \ell_{2}$,  \cite{scott2020thesis,MENGESHA2020123919}.   This particular form of the Fourier symbol allows for the use of Fourier multiplier theorems to demonstrate that for any ${\bm f}\in [L^{p}(\mathbb{R}^{d})]^{d}$ and $\lambda>0$, the solution $\bm{u} = \mathcal{F}^{-1}[(\mathbb{M}^{\Delta}({\bfs \xi}) + \lambda \mathbb{I} )^{-1} \hat{{\bm f}}] \in [H^{2s, p}(\mathbb{R}^{d})]^{d}  $.   For general $K$, the Fourier matrix symbol associated with the operator $\mathbb{L}$ is:
\begin{equation}\label{FMofL}
\mathbb{M}({\bfs \xi}) = \int_{\mathbb{R}^d} 
\frac{\bm{y}\otimes \bm{y}}{|\bm{y}|^2}\left(e^{\imath 2\pi{\bfs \xi}\cdot \bm{y}} - 1 -2\pi \imath{\bfs \xi}\cdot \bm{y} \chi^{(s)}(\bm{y}) \right)K(\bm{y}) d\bm{y},
\end{equation}
which in general lacks differentiability necessary to apply classical multiplier theorems. 
 We should mention that interior $H^{2s, p}$-regularity of weak solutions  to 
the Dirichlet problem for equations resembling \eqref{system}  subject to a complementary condition (also known as “volume constraint problem” in peridynamics) is established in \cite{KassmannMengeshaScott2019}. See also a recent work on the Calderon-Zygmund theory for a strongly-coupled systems of nonlocal equations with kernel that is translation variant and H\"{o}lder continuous \cite{MengeshaSchikorraSeesaneaYeepo}. See also the recent work \cite{defilippis2025partialregularitynonlocalsystems} for fine properties of weak solution of elliptic nonlocal systems. 
In the current work,
in place of weak solutions, we consider strong solutions solving an equation almost everywhere. 

Even for the case of scalar elliptic and parabolic equations corresponding to the nonlocal operator related to $\mathbb{L}$,  namely, 
\begin{equation}\label{scaler-main-op}
\mathcal{L} u(\bm{x}) = -\int_{\mathbb{R}^d} \left( u(\bm{x} + \bm{y}) - u(\bm{x}) - \nabla u(\bm{x})\cdot \bm{y} \chi^{(s)}(\bm{y}) \right) K(\bm{y}) \ d\bm{y}, 
\end{equation}
the result is highly nontrivial and has been the focus of  significant recent research efforts. These problems have been extensively studied in \cite{Mikulevicius1992,Mikulevicius2014,Mikulevicius2019, BanuelosBogdan2007,DongKim2012,Dong2023}, among others. 
The operator $\mathcal{L}$ is a nonlocal elliptic operator associated with a L\'{e}vy process. The solvability of parabolic equations related to the elliptic problem $\mathcal{L} u+\lambda u$ in Sobolev and H\"{o}lder spaces has been investigated in \cite{Mikulevicius1992} using a probabilistic approach. The boundedness of $\mathcal{L}: H^{2s,p} \to L^{p}$ for large $p$  in \cite{Mikulevicius2014} is established with an argument based on the classical theory of singular integrals. Following a similar technique, this is extended in \cite{Dong2023} to hold for arbitrary $p>1$ in the weighted space.     In \cite{BanuelosBogdan2007}, L\'{e}vy processes were analyzed using their Fourier symbols, which are expressed as quotients of symbols that involve integrals.  These representations facilitate the application of Fourier multiplier theorems to derive {\em a priori} estimates for solutions. However, given that the vector-valued operator $\mathbb{L}$ is strongly-coupled with a matrix-valued symbol, the results and techniques of \cite{BanuelosBogdan2007} cannot be directly applied to obtain a priori estimates for solutions to the system of equations \eqref{system}. In contrast,  real analytic techniques were employed in \cite{DongKim2012}, to establish $L^{p}$-solvability of the scalar elliptic equation $ \mathcal{L} u+\lambda u = f$.  This approach relies on maximum principle techniques to demonstrate the existence of solutions as well as  other approaches to show the boundedness of the operator $\mathcal{L}: H^{2s, p}(\mathbb{R}^{d}) \to L^{p}(\mathbb{R}^{d})$. Unfortunately, these techniques are not applicable to the strongly-coupled nonlocal system \eqref{system}, leaving the question of obtaining an analogous $L^{p}$-solvability for this system unresolved.

Our current work is based on and inspired by a recent study \cite{Dong:2023aa} that successfully established {\em a priori} estimates and unique $L^{p}$-solvability for (fractional) parabolic nonlocal equations without relying on maximum principle techniques. The solvability of these nonlocal equations is achieved through the method of continuity, which fundamentally depends on the derivation of {\em a priori} estimates and the continuity of $\mathcal{L}: H^{2s, p}(\mathbb{R}^{d}) \to L^{p}(\mathbb{R}^{d})$. As detailed in \cite{Dong:2023aa}, the proof of a priori estimates employs a combination of advanced techniques, including a level set argument, the “crawling of ink spots lemma” and a bootstrap argument. Specifically, the estimate is first established for $p=2$ using the Fourier transform and then an iterative procedure is applied to extend these results from $p=2$ to $p=\infty$. The continuity of the operator $\mathcal{L}: H^{2s, p}(\mathbb{R}^{d}) \to L^{p}(\mathbb{R}^{d})$ is established through an auxiliary estimate for solutions of parabolic equations. Notably, this approach does not rely on the analysis of Fourier symbols or the use of maximum principle techniques and can be readily adapted to analyze the strongly-coupled system of nonlocal equations \eqref{system}, and the primary objective of this work is to achieve that adaptation. 

This paper is organized as follows. Section \ref{prop-operator} discusses elementary properties of the operator $\mathbb{L}$. We then outline the application of the method of continuity for establishing existence of solutions in Section \ref{M-o-C}, while Section \ref{Cont-and-aPrior} presents the necessary ingredients, including continuity and a priori estimates, for its application. We conclude the paper in Section \ref{System-Parabolic} by proving the existence and stability estimates of solutions for the fractional parabolic system of nonlocal equations, adapting the approach used in \cite{Dong:2023aa}.

\section{Properties of the operator $\mathbb{L}$}\label{prop-operator}
In this section we present some elementary properties of the operator ${\mathbb{L}}.$  We assume that the kernel $K$ satisfies conditions \eqref{Ellipticity} and \eqref{cancellation}. 
We recall that for $\bm{u} \in [\mathcal{S}]^{d}$,  $\mathbb{L}\bm{u}$ is well defined at all $\bm{x} \in \mathbb{R}^{d}$ as an absolutely convergent integral and that 
$
\widehat{\mathbb{L}\bm{u}}({\bfs \xi}) = \mathbb{M}({\bfs \xi}) \widehat{\bm{u}}({\bfs \xi}),  
$ 
where $\mathbb{M}({\bfs \xi})$ is as defined in \eqref{FMofL}. We show first that $\mathbb{L}$ commutes with derivatives and as a consequence for any $\bm{u} \in  [\mathcal{S}]^{d}$, $\mathbb{L}\bm{u}\in [C^{\infty}(\mathbb{R}^{d})]^{d}$. We verify this by showing without loss of generality that 
\[
\mathbb{L}(\partial_{x_1}\bm{u})(\bm{x}) = \lim_{h\to 0} \frac{\mathbb{L}\bm{u}(\bm{x}+ h \bm{e}_1) - \mathbb{L} \bm{u}(\bm{x})}{h}
\]
which says $ \mathbb{L}(\partial_{x_1}\bm{u})(\bm{x}) = \partial_{x_1} (\mathbb{L} \bm{u}(\bm{x}))$. We denote 
\[
\delta_{s}[\bm{u}](\bm{x}, \bm{y}) :=\bm{u}(\bm{x} + \bm{y}) - \bm{u}(\bm{x}) - D[\bm{u}](\bm{x}) \bm{y} \chi^{(s)}(\bm{y}) 
\]
Then 
\begin{align*}
\lim_{h\to 0} \frac{\mathbb{L}\bm{u}(\bm{x}+ h \bm{e}_1) - \mathbb{L} \bm{u}(\bm{x})}{h}= - \lim_{h\to 0}  \int_{\mathbb{R}^d}   \left( \frac{\bm{y} \otimes \bm{y}}{|\bm{y}|^2} \right) {\delta_{s}[\bm{u}](\bm{x} + h{\bm e}_1, \bm{y})-\delta_{s}[\bm{u}](\bm{x}, \bm{y})\over h} K(\bm{y}) \ d\bm{y}. 
\end{align*}
If we can interchange the limit and the integration then, since $\bm{u}$ is smooth, 
\[
\lim_{h\to 0} \frac{\mathbb{L}\bm{u}(\bm{x}+ h \bm{e}_1) - \mathbb{L} \bm{u}(\bm{x})}{h} = -  \int_{\mathbb{R}^d} \left( \frac{\bm{y} \otimes \bm{y}}{|\bm{y}|^2} \right) \delta_{s}[\partial_{x_1}\bm{u}](\bm{x}, \bm{y}) K(\bm{y}) \ d\bm{y} = \mathbb{L}(\partial_{x_1}\bm{u})(\bm{x}). 
\]
Interchanging of limit and integration is justified by applying dominated convergence theorem after noting that  
\[
\left| {\delta_{s}[\bm{u}](\bm{x} + h{\bm e}_1, \bm{y})-\delta_{s}[\bm{u}](\bm{x}, \bm{y})\over h} K(\bm{y})\right| \leq 
\left\{ \begin{split}
&C_1(\bm{u}) |\bm{y}|^{1-d}\quad \text{for $|\bm{y}| \leq 1$}\\
&C_{2} (\bm{x}, \bm{u}) \left({1\over |\bm{y}|^{d + 2s}} + \frac{1}{|\bm{y}|^{d+2s-1}} \chi^{(s)}(\bm{y})\right)\quad \text{for $|\bm{y}| \geq 1$}. 
\end{split}
\right.
\]
As in the fractional Laplacian case, see \cite{Silvestre2007Regularity}, it can be shown that for any $k$ nonnegative integer, $(1 + |\bm{x}|^{d +2s})\partial^{k}\mathbb{L}(\bm u)$ is bounded in $\mathbb{R}^{d}$, and  $\mathbb{L}\bm{u} \in [\bar{\mathcal{S}_{s}}]^{d}$ whenever $\bm{u} \in [\mathcal{S}]^d$. Here 
\[
\bar{\mathcal{S}_{s}} =\left\{ v\in C^{\infty}(\mathbb{R}^{d}): \forall k\geq 0, k \ \text{integer}, \sup_{\bm{x}\in \mathbb{R}^{d}}((1 + |\bm{x}|^{d +2s}) \partial^{k} v(\bm{x})) < \infty \right\}, 
\]
equipped with the semi norms $[ v ]_k = \sup(1 + |\bm{x}|^{d+2s}) \partial^{k} v(\bm{x})$.  We denote the dual of $\bar{\mathcal{S}_{s}}$ by $\bar{\mathcal{S}'_{s}}$. 
We extend the definition of $\mathbb{L}$ to the space $[\bar{\mathcal{S}'_{s}}]^{d}$ by duality as 
\[
\langle \mathbb{L}\bm{u}, \bm{v}\rangle = \langle \bm{u}, \mathbb{L}^{\ast}\bm{v} \rangle
\]
where $\mathbb{L}^{\ast}$ is an operator of the same form as $\mathbb{L}$ associated with kernel $K^* (\bm{y}) = K(-\bm{y})$. In the event $K$ is radial, then two operators, $\mathbb{L}^{\ast}$ and $\mathbb{L}$,  are the same.  Moreover, $K^* $ will satisfy the same ellipticity and cancellation conditions, \eqref{Ellipticity} and \eqref{cancellation}.  It follows that
$\mathbb{L}$ is a continuous operator from $[\bar{\mathcal{S}'_{s}}]^{d}$ to $[\mathcal{S'}]^{d}$. 

Let $\psi(\bm{x}) = {1\over 1+ |\bm{x}|^{d+2s}}$. 
It is proven in \cite[Propositon 2.4]{Silvestre2007Regularity} that if $\bm{u}\in [L^{1}(\mathbb{R}^{d}, \psi)]^d$ and for some $\epsilon >0,$  $\bm{u} \in [C^{0, 2s + \epsilon}(\mathbb{R}^{d})]^d$ if $s\in (0, 1/2]$ and in   $[C^{1, 2s + \epsilon-1}(\mathbb{R}^{d})]^d$ if $s\in (1/2, 1)$, then $(-\Delta)^{s} \bm{u}$ is a continuous function and its values are given by the integral representation.   The same result can be established for $\mathbb{L}\bm{u}$. See similar result in \cite{scott2022paper}. 
\begin{proposition} 
\label{property1}
Let $\bm{u}$ belong to $[L^1(\mathbb{R}^d,\psi)]^d$ and additionally, for some $\epsilon>0$, $\bm{u} \in [C^{0, 2s + \epsilon}(\mathbb{R}^{d})]^d$ if $s\in (0, 1/2]$ and in   $[C^{1, 2s + \epsilon-1}(\mathbb{R}^{d})]^d$ if $s\in (1/2, 1)$.
Then $\mathbb{L}\bm{u}$ is continuous in $\mathbb{R}^d$ and for every $\bm{x}\in\mathbb{R}^d$ its value is given by \eqref{leading-operator}. Moreover, if $\bm{u} \in [C_b^{\infty}(\mathbb{R}^{d})]^{d}, $ the integral  \eqref{leading-operator} absolutely converges and $\mathbb{L}\bm{u}\in [C_b^{\infty}(\mathbb{R}^{d})]^{d}$.
\end{proposition}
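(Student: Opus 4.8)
The plan is to mimic, in the vector-valued setting, the proof of Silvestre's Proposition 2.4 for the fractional Laplacian, exploiting that the matrix weight $\bm{y}\otimes\bm{y}/|\bm{y}|^2$ is bounded and even, and that $K(\bm{y})\asymp(1-s)|\bm{y}|^{-(d+2s)}$ by \eqref{Ellipticity}. First I would show that under the stated regularity the integral in \eqref{leading-operator} converges absolutely at each fixed $\bm{x}$. Split the domain into $|\bm{y}|\le 1$ and $|\bm{y}|>1$. On $|\bm{y}|>1$, the integrand is controlled by $|\bm{u}(\bm{x}+\bm{y})|\,|\bm{y}|^{-(d+2s)}$ plus terms involving $|\bm{u}(\bm{x})|$ and $|D[\bm{u}](\bm{x})|\,|\bm{y}|\chi^{(s)}(\bm{y})$ times $|\bm{y}|^{-(d+2s)}$; the first is integrable precisely because $\bm{u}\in[L^1(\mathbb{R}^d,\psi)]^d$ (after translating, $\psi(\bm{x}+\bm{y})\asymp|\bm{y}|^{-(d+2s)}$ for $|\bm{y}|$ large), the second is integrable since $2s>0$, and for $s>1/2$ the gradient term contributes $|\bm{y}|^{1-d-2s}$ which is integrable since $2s>1$ (and for $s=1/2$ it is cut off to $|\bm{y}|\le 1$). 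On $|\bm{y}|\le1$, I use the cancellation of the first-order term: for $s\le 1/2$ the Hölder bound $|\bm{u}(\bm{x}+\bm{y})-\bm{u}(\bm{x})|\le[\bm{u}]_{C^{0,2s+\epsilon}}|\bm{y}|^{2s+\epsilon}$ gives integrand $\lesssim|\bm{y}|^{\epsilon-d}$; for $s>1/2$, Taylor expansion to first order with the $C^{1,2s+\epsilon-1}$ bound gives $|\bm{u}(\bm{x}+\bm{y})-\bm{u}(\bm{x})-\nabla\bm{u}(\bm{x})\bm{y}|\lesssim|\bm{y}|^{2s+\epsilon}$, and one must also check $|(\nabla\bm{u}(\bm{x})-D[\bm{u}](\bm{x}))\bm{y}|=|\tfrac12(\nabla\bm{u}-\nabla\bm{u}^T)\bm{y}|$ is handled by the cancellation condition \eqref{cancellation} on $\partial B_r$ (the antisymmetric part integrates to zero against the even matrix weight over spheres, or is absorbed exactly as in the scalar case); at $s=1/2$ the indicator $\mathbbm{1}_{B_1}$ plays the same role. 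In all cases the bound is $\lesssim|\bm{y}|^{\epsilon'-d}$ near the origin, which is integrable.

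Next I would establish continuity of $\bm{x}\mapsto\mathbb{L}\bm{u}(\bm{x})$. The standard argument: fix $\bm{x}_0$, and for $\bm{x}$ near $\bm{x}_0$ dominate the integrand uniformly in $\bm{x}$ by a fixed integrable function, then apply dominated convergence using the pointwise continuity of the integrand in $\bm{x}$ (which follows from continuity of $\bm{u}$ and of $D[\bm{u}]$). The domination on $|\bm{y}|\le 1$ is uniform because the Hölder/Taylor seminorms are global; on $|\bm{y}|>1$ one uses $\int|\bm{u}(\bm{x}+\bm{y})|\psi(\bm{y})\,d\bm{y}$, which is uniformly bounded for $\bm{x}$ in a compact set since $\psi(\bm{y})\asymp\psi(\bm{x}+\bm{y})$ up to a constant depending only on the compact set. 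This yields that $\mathbb{L}\bm{u}$ is continuous and equals \eqref{leading-operator} pointwise.

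Finally, for the case $\bm{u}\in[C_b^\infty(\mathbb{R}^d)]^d$, absolute convergence is immediate from the bounds above (boundedness of $\bm{u}$ handles $|\bm{y}|>1$ trivially, and all derivatives are bounded so the Taylor remainder estimates hold with global constants); and $\mathbb{L}\bm{u}\in[C_b^\infty(\mathbb{R}^d)]^d$ follows because, as already shown in the discussion preceding the proposition, $\mathbb{L}$ commutes with $\partial_{x_i}$ via another dominated-convergence / difference-quotient argument, so $\partial^k(\mathbb{L}\bm{u})=\mathbb{L}(\partial^k\bm{u})$ and each $\mathbb{L}(\partial^k\bm{u})$ is bounded by the same estimates applied to the bounded smooth function $\partial^k\bm{u}$.

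The main obstacle is the low-regularity near-diagonal analysis for $s\ge1/2$: one has to combine the first-order Taylor expansion with the fact that the \emph{symmetrized} gradient $D[\bm{u}]$ rather than the full gradient appears, so the antisymmetric part $\tfrac12(\nabla\bm{u}-\nabla\bm{u}^T)\bm{y}\,\chi^{(s)}(\bm{y})$ must be shown to either integrate to zero against $\tfrac{\bm{y}\otimes\bm{y}}{|\bm{y}|^2}K(\bm{y})$ over each sphere — which is exactly where the cancellation hypothesis \eqref{cancellation} and the evenness of the matrix weight enter — or to produce an absolutely convergent contribution on its own. Managing this term cleanly, and making the $s=1/2$ borderline (where $\chi^{(s)}=\mathbbm{1}_{B_1}$) fit the same scheme using the remark that $\mathbbm{1}_{B_1}$ may be replaced by $\mathbbm{1}_{B_r}$, is the delicate point; everything else is a routine adaptation of \cite[Proposition 2.4]{Silvestre2007Regularity}.
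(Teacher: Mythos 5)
Your approach is broadly in the spirit of the paper's (both follow Silvestre's Proposition~2.4 for the fractional Laplacian), but there are two genuine gaps.

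The more serious one is your handling of the antisymmetric part of the gradient near the origin when $s\in(1/2,1)$. You invoke the cancellation condition~\eqref{cancellation} and spherical averaging against the even matrix weight, but neither is what makes this term harmless. The offending piece is
\[
\frac{\bm{y}\otimes\bm{y}}{|\bm{y}|^2}\bigl(\nabla\bm{u}(\bm{x})-D[\bm{u}](\bm{x})\bigr)\bm{y}
=\frac{\bm{y}}{|\bm{y}|^2}\,\bigl(\bm{y}^{T}A\bm{y}\bigr),
\qquad A:=\tfrac12\bigl(\nabla\bm{u}(\bm{x})-\nabla\bm{u}(\bm{x})^{T}\bigr),
\]
and since $A$ is antisymmetric, $\bm{y}^{T}A\bm{y}\equiv 0$, so the whole expression vanishes pointwise for \emph{every} $\bm{y}$ — no integration over spheres is involved. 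This is exactly the algebraic observation the paper makes. Note that~\eqref{cancellation} is assumed only at $s=1/2$ and concerns $\int_{\partial B_r}\bm{y}\,K(\bm{y})\,dS$, and since $K$ is not assumed even the matrix-weighted spherical average you gesture at would not obviously vanish anyway. Without the pointwise identity your near-origin estimate for $s>1/2$ only gives $|\bm{y}|^{1-d-2s}$ from this term, which is not integrable, so your claimed bound $\lesssim|\bm{y}|^{\epsilon'-d}$ is not yet justified.

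The second gap is the identification of the pointwise integral with the distributional extension. The proposition asserts that $\mathbb{L}\bm{u}$ — the object already defined by duality against $\mathbb{L}^{*}$ on $[\bar{\mathcal{S}}'_{s}]^{d}$ earlier in the section — is represented by the integral~\eqref{leading-operator}. You show the integral converges absolutely and defines a continuous function $I(\bm{x})$, but you never show $I=\mathbb{L}\bm{u}$ as distributions. The paper handles this by approximation: truncate and mollify $\bm{u}$ to obtain $\bm{u}_{N,\delta}\in[C_{c}^{\infty}(\mathbb{R}^{d})]^{d}$ converging to $\bm{u}$ in $[L^{1}(\mathbb{R}^{d},\psi)]^{d}$; then $\mathbb{L}\bm{u}_{N,\delta}\to\mathbb{L}\bm{u}$ in the sense of distributions by the duality definition, while $\mathbb{L}\bm{u}_{N,\delta}(\bm{x})\to I(\bm{x})$ pointwise by dominated convergence (using the uniform H\"older/Taylor bounds and a dominating integrable tail coming from $L^{1}(\psi)$-convergence), and uniqueness of the limit yields the identification. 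Alternatively you could supply a Fubini argument showing $\int I\cdot\bm{v}=\langle\bm{u},\mathbb{L}^{*}\bm{v}\rangle$ for $\bm{v}\in[\bar{\mathcal{S}}_{s}]^{d}$, but some such step is needed and is missing. Your remaining steps — dominated convergence for continuity and derivative commutation for the $C_{b}^{\infty}$ case — are sound and match the paper.
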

The proof mimics that given in \cite{Silvestre2007Regularity}; but for completeness we present it in the appendix. Finally, we have the useful result that $\mathbb{L}$ commutes with $(-\Delta)^{s}$. 
\begin{proposition}\label{commute}
Suppose that $\bm{u} \in [C_b^{\infty}(\mathbb{R}^{d})]^{d}$. Then for any $\bm{x}\in \mathbb{R}^{d}$ we have   \[\mathbb{L}((-\Delta)^{s}\bm{u} )(\bm{x}) = (-\Delta)^{s}(\mathbb{L} \bm{u}) (\bm{x}).\]
\end{proposition}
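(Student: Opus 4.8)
The plan is to prove that $\mathbb{L}$ and $(-\Delta)^s$ commute on $[C_b^\infty(\mathbb{R}^d)]^d$ by exploiting the fact that both operators are defined via absolutely convergent singular integrals (by Proposition \ref{property1} and the classical theory of the fractional Laplacian) and that $(-\Delta)^s$ acts componentwise. The key point is that $\mathbb{L}$ is a constant-coefficient nonlocal operator, so it commutes with translations; combining this with linearity should yield the claim. Concretely, first I would record that for $\bm u \in [C_b^\infty(\mathbb{R}^d)]^d$ we also have $(-\Delta)^s \bm u \in [C_b^\infty(\mathbb{R}^d)]^d$ (this follows from the standard fact that $(-\Delta)^s$ maps smooth bounded functions with bounded derivatives into functions of the same class, together with the integral representation valid on this class), so that $\mathbb{L}((-\Delta)^s \bm u)$ is itself given by the absolutely convergent integral \eqref{leading-operator}; symmetrically, $\mathbb{L}\bm u \in [C_b^\infty(\mathbb{R}^d)]^d$ by Proposition \ref{property1}, so $(-\Delta)^s(\mathbb{L}\bm u)$ is given by its own singular integral. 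Thus both sides of the asserted identity are honest functions defined by convergent integrals, and it remains to show the two integrals agree pointwise.

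Next I would carry out the interchange of the two integrals. Write $(-\Delta)^s \bm u(\bm x) = c_{d,s}\,\mathrm{p.v.}\int_{\mathbb{R}^d}(\bm u(\bm x) - \bm u(\bm x+\bm z))|\bm z|^{-(d+2s)}\,d\bm z$ (componentwise, with the appropriate principal-value/correction convention depending on the range of $s$), and substitute this into the integral defining $\mathbb{L}((-\Delta)^s\bm u)(\bm x)$. Since $\delta_s[\,\cdot\,](\bm x,\bm y)$ is linear in its function argument and, for fixed smooth bounded $\bm u$, the map $\bm z \mapsto \delta_s[\bm u(\cdot)-\bm u(\cdot+\bm z)](\bm x,\bm y)$ is controlled uniformly so as to make the double integral $\iint \big|\tfrac{\bm y\otimes\bm y}{|\bm y|^2}\big|\,|\delta_s[\ldots](\bm x,\bm y)|\,K(\bm y)\,|\bm z|^{-(d+2s)}\,d\bm y\,d\bm z$ absolutely convergent, Fubini's theorem lets me swap the order of integration. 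After swapping, the inner integral over $\bm y$ is exactly $(\mathbb{L}\text{-type kernel applied to }\bm u(\cdot)-\bm u(\cdot+\bm z))(\bm x)$; because $\mathbb{L}$ commutes with translations (the kernel $K(\bm y)$ and the matrix $\bm y\otimes\bm y/|\bm y|^2$ do not depend on $\bm x$), this equals $(\mathbb{L}\bm u)(\bm x) - (\mathbb{L}\bm u)(\bm x+\bm z)$. Reassembling the outer integral over $\bm z$ then gives precisely $(-\Delta)^s(\mathbb{L}\bm u)(\bm x)$, which is the desired identity. A clean alternative is to mollify: approximate $\bm u$ by Schwartz fields, use $\widehat{\mathbb{L}\bm v} = \mathbb{M}(\bm\xi)\widehat{\bm v}$ and $\widehat{(-\Delta)^s\bm v} = (2\pi|\bm\xi|)^{2s}\widehat{\bm v}$ — which commute trivially at the symbol level since scalar multiplication by $(2\pi|\bm\xi|)^{2s}$ commutes with the matrix $\mathbb{M}(\bm\xi)$ — and then pass to the limit using the pointwise convergence of the singular integrals guaranteed by Proposition \ref{property1}; but for $\bm u$ merely in $[C_b^\infty]^d$ (not decaying), the Fourier route needs care, so I would present the direct Fubini argument as the main line.

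The main obstacle is the rigorous justification of the Fubini interchange: one must produce an integrable majorant for the double integrand that handles simultaneously the singularity of $|\bm z|^{-(d+2s)}$ near $\bm z = 0$ and that of $K(\bm y)$ near $\bm y = 0$, while also controlling the behavior at infinity where only boundedness of $\bm u$ (not decay) is available — and, for $s \in [1/2,1)$, accounting for the gradient-correction term $D[\bm u]\,\bm y\,\chi^{(s)}(\bm y)$, whose presence means one cannot naively bound $|\delta_s[\bm u(\cdot)-\bm u(\cdot+\bm z)](\bm x,\bm y)|$ by $\|\bm u\|_{C^0}$ alone near $\bm y=0$ but must use a second-order Taylor estimate $\lesssim \|D^2(\bm u(\cdot)-\bm u(\cdot+\bm z))\|_{\infty}|\bm y|^2 \lesssim \|D^2\bm u\|_\infty |\bm y|^2$ on $|\bm y|\le 1$ and an $L^1(\mathbb{R}^d,\psi)$-type estimate on $|\bm y|\ge 1$. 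Since $(-\Delta)^s\bm u \in [C_b^\infty]^d$ with all derivatives bounded uniformly, these bounds hold with constants independent of $\bm z$, and the two one-dimensional singularities decouple, so the majorant is integrable; this is routine but is the step requiring the most bookkeeping. I expect the remaining algebraic identification (after the swap) to be immediate from constant-coefficient translation invariance.
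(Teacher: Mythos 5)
Your proposal follows the same route as the paper's own (brief) proof: since $\bm u \in [C_b^\infty(\mathbb{R}^d)]^d$, Proposition \ref{property1} gives that $\mathbb{L}\bm u$ and $(-\Delta)^s\bm u$ are themselves in $[C_b^\infty(\mathbb{R}^d)]^d$ with absolutely convergent integral representations, so both compositions are represented by double integrals, and the identity follows from Fubini plus the translation invariance of $\mathbb{L}$. The integrable-majorant bookkeeping you flag as the main obstacle is exactly what the paper compresses into the statement that the double integrand is bounded by a sum of functions in $L^1(\mathbb{R}^d\times\mathbb{R}^d,\,d\bm z\,d\bm y)$; so the argument matches, just presented at different levels of detail.
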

\begin{proof}
The proof is a tedious interchanging of integrals. The main point is by Proposition \ref{property1}, since $\bm{u} \in [C_b^{\infty}(\mathbb{R}^{d})]^{d}$, the integral representations of both $\mathbb{L}\bm{u}(\bm{x})$ and $(-\Delta)^{s}\bm{u}(\bm{x})$ absolutely converge and are in $[C_b^{\infty}(\mathbb{R}^{d})]^{d}$. Thus both compositions $\mathbb{L}((-\Delta)^{s}\bm{u} )(\bm{x}) $ and  $(-\Delta)^{s}(\mathbb{L} \bm{u}) (\bm{x})$ are represented by absolutely convergent double integrals. Interchanging  of integrals is justified by application of Fubini's theorem since the resulting integrals can bounded by 
a sum of functions in $L^{1}(\mathbb{R}^{d}\times \mathbb{R}^{d}, d{\bm z} \, d\bm{y})$.
\end{proof}
In the above and in what follows, $ [C_b^{\infty}(\mathbb{R}^{d})]^{d}$  (resp. $[C_0^{\infty}(\mathbb{R}^{d})]^{d}$) denotes the space of $\mathbb{R}^d$- valued $C^{\infty}$-vector fields whose components and all their partial derivatives are uniformly bounded (res. vanish at $\infty$) on $\mathbb{R}^d.$  

\begin{proposition}
\label{commutestar}
Let $s \in (0, 1)$, $p \in (1,\infty)$. Then $\mathbb{L}$ commutes with the convolution operator in $[\mathcal{S}]^d$. So does its unique continuous extension $\mathbb{L}: [H^{2s,p}(\mathbb{R}^{d})]^{d} \to [L^{p}(\mathbb{R}^{d})]^d$, if it exists.
\end{proposition}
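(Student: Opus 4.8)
The plan is to establish the identity first for $\bm u\in[\mathcal{S}]^d$ by a Fubini‑type interchange, and then to transfer it to the continuous extension by a density argument. Throughout, ``convolution operator'' means $\bm u\mapsto\varphi*\bm u$ for a fixed scalar $\varphi\in\mathcal{S}(\mathbb{R}^d)$ (scalarity is needed, since the matrix symbol $\mathbb{M}$ does not commute with a general matrix‑valued kernel). For $\bm u\in[\mathcal{S}]^d$ one has $\varphi*\bm u\in[\mathcal{S}]^d$, so by Proposition~\ref{property1} the value $\mathbb{L}(\varphi*\bm u)(\bm x)$ is given by the absolutely convergent integral \eqref{leading-operator}. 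Linearity of $\delta_s[\cdot](\bm x,\bm y)$ in its function slot together with $D[\varphi*\bm u]=\varphi*D[\bm u]$ gives, for each fixed $\bm x$,
\[
\delta_s[\varphi*\bm u](\bm x,\bm y)=\int_{\mathbb{R}^d}\varphi(\bm z)\,\delta_s[\bm u](\bm x-\bm z,\bm y)\,d\bm z ;
\]
substituting this into \eqref{leading-operator} and interchanging the $\bm z$‑ and $\bm y$‑integrations turns $\mathbb{L}(\varphi*\bm u)(\bm x)$ into $(\varphi*\mathbb{L}\bm u)(\bm x)$. Equivalently, $\delta_s[\tau_{\bm z}\bm u](\bm x,\bm y)=\delta_s[\bm u](\bm x-\bm z,\bm y)$ straight from the definition, so $\mathbb{L}$ is translation invariant and $\varphi*\bm u$ is a superposition of translates; a one‑line Fourier check $\widehat{\mathbb{L}(\varphi*\bm u)}=\mathbb{M}\widehat\varphi\,\widehat{\bm u}=\widehat\varphi\,\widehat{\mathbb{L}\bm u}$ yields the same conclusion.

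The one step needing care — and the main, though mild, obstacle — is the justification of that interchange, i.e.\ exhibiting, for each fixed $\bm x$, an $L^1(\mathbb{R}^d_{\bm z}\times\mathbb{R}^d_{\bm y})$ bound for $|\varphi(\bm z)|\,\big|\tfrac{\bm y\otimes\bm y}{|\bm y|^2}\delta_s[\bm u](\bm x-\bm z,\bm y)\big|\,|K(\bm y)|$. Here I would use exactly the cancellation that makes $\mathbb{L}$ itself well defined: writing $W[\bm u]$ for the antisymmetric part of $\nabla\bm u$, one has $\bm y^{T}W[\bm u]\bm y=0$, so $\tfrac{\bm y\otimes\bm y}{|\bm y|^2}$ annihilates the linear antisymmetric part of $\delta_s$; what remains obeys the second‑order Taylor bound $\lesssim\|D^2\bm u\|_{L^\infty}|\bm y|^2$ for $|\bm y|\le1$ (the first‑order bound $\lesssim\|\nabla\bm u\|_{L^\infty}|\bm y|$ suffices when $s<1/2$), and the crude bound $\lesssim\|\bm u\|_{L^\infty}+\|\nabla\bm u\|_{L^\infty}|\bm y|\chi^{(s)}(\bm y)$ for $|\bm y|>1$. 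Together with $|K(\bm y)|\lesssim|\bm y|^{-(d+2s)}$ and $\varphi\in L^1$, the resulting dominating function is integrable over $\mathbb{R}^d\times\mathbb{R}^d$ since $s<1$ and, where the linear tail is present (i.e.\ $s>1/2$), $1-d-2s<-d$. This is precisely the integrability already proved for Proposition~\ref{property1}, the only addition being the harmless extra factor $\varphi(\bm z)$.

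For the extension, suppose the continuous map $\mathbb{L}\colon[H^{2s,p}(\mathbb{R}^d)]^d\to[L^p(\mathbb{R}^d)]^d$ exists, fix $\bm u\in[H^{2s,p}(\mathbb{R}^d)]^d$, and choose $\bm u_n\in[\mathcal{S}]^d$ with $\bm u_n\to\bm u$ in $[H^{2s,p}(\mathbb{R}^d)]^d$. Convolution with $\varphi$ commutes with the Bessel potential $(1+4\pi^2|\bm\xi|^2)^s$ on the Fourier side, so Young's inequality gives $\|\varphi*\bm v\|_{[H^{2s,p}(\mathbb{R}^d)]^d}\le\|\varphi\|_{L^1}\|\bm v\|_{[H^{2s,p}(\mathbb{R}^d)]^d}$ and $\|\varphi*\bm w\|_{[L^p(\mathbb{R}^d)]^d}\le\|\varphi\|_{L^1}\|\bm w\|_{[L^p(\mathbb{R}^d)]^d}$. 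Hence $\varphi*\bm u_n\to\varphi*\bm u$ in $[H^{2s,p}(\mathbb{R}^d)]^d$ and $\varphi*\mathbb{L}\bm u_n\to\varphi*\mathbb{L}\bm u$ in $[L^p(\mathbb{R}^d)]^d$, while continuity of $\mathbb{L}$ gives $\mathbb{L}\bm u_n\to\mathbb{L}\bm u$ and $\mathbb{L}(\varphi*\bm u_n)\to\mathbb{L}(\varphi*\bm u)$ in $[L^p(\mathbb{R}^d)]^d$; since each $\varphi*\bm u_n\in[\mathcal{S}]^d$, the first part gives $\mathbb{L}(\varphi*\bm u_n)=\varphi*\mathbb{L}\bm u_n$, and letting $n\to\infty$ yields $\mathbb{L}(\varphi*\bm u)=\varphi*\mathbb{L}\bm u$. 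All of this is routine once the Fubini bound of the first part — a minor variant of the estimate behind Proposition~\ref{property1} — is in hand, and that is the only point I would expect to require any genuine work.
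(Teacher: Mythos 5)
Your proposal is correct, and the density argument for the continuous extension is essentially identical to the paper's: both use $\|\varphi*\bm v\|_{[H^{2s,p}]^d}\le\|\varphi\|_{L^1}\|\bm v\|_{[H^{2s,p}]^d}$ (via the Fourier side) together with the assumed boundedness of $\mathbb{L}$, and pass to the limit along an approximating Schwartz sequence. For the Schwartz-level commutation, the paper does exactly the ``one-line Fourier check'' you mention as an afterthought: since $\widehat{\mathbb{L}\bm v}=\mathbb{M}\widehat{\bm v}$ for $\bm v\in[\mathcal{S}]^d$, one writes $\mathcal{F}[\mathbb{L}(\bm u*v)]=\mathbb{M}\widehat{\bm u}\widehat{v}=\mathcal{F}[(\mathbb{L}\bm u)*v]$ and is done. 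Your primary route --- the pointwise Fubini interchange of $\int d\bm z$ with $\int d\bm y$ in the integral representation, using the annihilation of the antisymmetric part of $\nabla\bm u$ by $\bm y\otimes\bm y/|\bm y|^2$ and the Taylor/tail bounds to build an $L^1(d\bm z\,d\bm y)$ majorant --- is a valid and more elementary alternative that avoids the Fourier multiplier identity, but it amounts to redoing the absolute-convergence estimate already secured in Proposition~\ref{property1}; the Fourier route is shorter once that identity is on the table. Nothing is missing, and your estimate justifying Fubini (quadratic near the origin after cancellation, $\|\bm u\|_{L^\infty}$ plus a linear $\chi^{(s)}$ tail at infinity) is correct in each regime of $s$.
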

\begin{proof}
Let $\bm{u}\in [\mathcal{S}]^d$ and $v \in L^{1}(\mathbb{R}^{d})$. Then $\bm{u} \ast v \in [\mathcal{S}]^d$ and 
\begin{align*}
\mathcal{F}[\mathbb{L} (\bm{u} \ast v)] (\bm{\xi}) =  \mathbbm{M}(\bm{\xi}) \mathcal{F}[\bm{u} \ast v](\bm{\xi})
=  \mathbbm{M}(\bm{\xi}) \Hat{\bm{u}}(\bm{\xi})  \Hat{v}(\bm{\xi})
=  \mathcal{F}[\mathbb{L} (\bm{u}) ] (\bm{\xi}) \Hat{v}(\bm{\xi}) 
= \mathcal{F}[(\mathbb{L} (\bm{u})) \ast  v ] (\bm{\xi}).
\end{align*}
Given $\bm{u} \in [H^{2s,p}(\mathbb{R}^d)]^d$, there exists a sequence $\{\bm{u}_n\}_n \subset  [\mathcal{S}]^d$ such that $\bm{u}_n \to \bm{u}$ in $[H^{2s,p}(\mathbb{R}^d)]^d$. For each $n$, by the previous computation, $\mathbb{L}( \bm{u}_n \ast v) = \mathbb{L}( \bm{u}_n) \ast v$ for all $v \in \mathcal{S}$. 
Note that 
\begin{align*}
\|\bm{u} \ast v\|_{[H^{2s,p}(\mathbb{R}^d)]^d} = \| ((1+4\pi^2 |\bm{\xi}|^2 )^s \widehat{(\bm{u} \ast v)})^\vee\|_{[L^p(\mathbb{R}^d)]^d} 
&= \| ((1+4\pi^2 |\bm{\xi}|^2 )^s \widehat{\bm{u}} \  \widehat{v})^\vee\|_{[L^p(\mathbb{R}^d)]^d}\\
&\leq \|v\|_{L^1(\mathbb{R}^d)} \|\bm{u} \|_{[H^{2s,p}(\mathbb{R}^d)]^d}
\end{align*}
and, since $\bm{u}_n \to \bm{u}$ in $[H^{2s,p}(\mathbb{R}^d)]^d$, it follows that $\bm{u}_n \ast v \to \bm{u} \ast v$ in $[H^{2s,p}(\mathbb{R}^d)]^d$ for all $v \in \mathcal{S}$. Therefore, on the one hand, 
\begin{align*}
\|\mathbb{L}( \bm{u}_n \ast v)   - \mathbb{L}( \bm{u} \ast v)  \|_{[L^p(\mathbb{R}^d)]^d} &\leq  C \| [(\bm{u}_n -\bm{u} ) \ast v]\|_{[L^p(\mathbb{R}^d)]^d}\leq C \| (\bm{u}_n -\bm{u} ) \ast v\|_{[H^{2s,p}(\mathbb{R}^d)]^d} \to 0
\end{align*}
as $n\to \infty$.  
On the other hand, since $\bm{u}_n \to \bm{u}$ in $[H^{2s,p}(\mathbb{R}^d)]^d$, $\mathbb{L}$ is assumed to have a continuous extension, we have  $\mathbb{L}( \bm{u}_n) \to \mathbb{L}( \bm{u})$ in $[L^p(\mathbb{R}^d)]^d$. It follows that $\mathbb{L}( \bm{u}_n) \ast v  \to \mathbb{L}( \bm{u}) \ast v $ in $[L^p(\mathbb{R}^d)]^d$, as $n\to \infty$. By uniqueness of limit, $\mathbb{L}( \bm{u} \ast v)  = \mathbb{L}( \bm{u}) \ast v$, completing the proof. 
\end{proof}
 
\section{Existence of a solution via method of continuity}\label{M-o-C}
Following \cite{Dong:2023aa} , the main tool we use to prove $L^{p}$-solvability of the nonlocal system \eqref{system} is the method of continuity whose proof can be found in \cite{Jost2007}.

\begin{proposition}
\label{moc}
Let $X$ be a Banach space and $V$ a normed vector space. Suppose that 
\begin{itemize}
\item[i)]
 $\Upsilon_0, \Upsilon_1 : X \to V$ are bounded linear operators
\item [ii)]there exists a constant $C > 0$ such that if $\Upsilon_\tau := (1 - \tau) \Upsilon_0 + \tau \Upsilon_1$ for $\tau \in [0,1]$, then 
\[
\|x\|_X \leq C \|\Upsilon_\tau x\|_V \quad \text{for all } x \in X \text{ and } \tau \in [0,1].
\]
\end{itemize}
Then $\Upsilon_0$ is surjective if and only if $\Upsilon_1$ is surjective.
\end{proposition}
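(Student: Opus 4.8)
The plan is to carry out the standard continuation argument. First I would extract from hypothesis (ii) two facts: for each fixed $\tau$ the estimate $\|x\|_X \le C\|\Upsilon_\tau x\|_V$ makes $\Upsilon_\tau$ injective, and if moreover $\Upsilon_\tau$ is surjective, then it is a bijection of $X$ onto $V$ with $\|\Upsilon_\tau^{-1}v\|_X \le C\|v\|_V$ for every $v \in V$, i.e.\ a \emph{bounded} inverse — even though $V$ is only assumed normed. I would also set $M := \|\Upsilon_1 - \Upsilon_0\|_{X\to V}$, which is finite by (i), and record the algebraic identity $\Upsilon_\tau = \Upsilon_\sigma + (\tau-\sigma)(\Upsilon_1 - \Upsilon_0)$ valid for all $\sigma,\tau \in [0,1]$.

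The core step is to prove: if $\Upsilon_\sigma$ is surjective for some $\sigma \in [0,1]$, then $\Upsilon_\tau$ is surjective for every $\tau \in [0,1]$ with $|\tau - \sigma| < \delta := \tfrac{1}{2CM}$ (if $M = 0$ the two operators coincide and there is nothing to prove). Fixing such a $\tau$ and an arbitrary $v \in V$, the equation $\Upsilon_\tau x = v$ is, using the identity above and the bounded inverse of $\Upsilon_\sigma$, equivalent to the fixed-point problem
\[
x = \Upsilon_\sigma^{-1} v - (\tau - \sigma)\,\Upsilon_\sigma^{-1}(\Upsilon_1 - \Upsilon_0)x =: T_v(x),
\]
and the Lipschitz constant of $T_v : X \to X$ is bounded by $|\tau-\sigma|\,C M \le \tfrac12$. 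Since $X$ is complete, Banach's fixed point theorem yields a (unique) solution $x$; as $v$ was arbitrary, $\Upsilon_\tau$ is surjective.

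To conclude, I would chain these local steps. The crucial point is that $\delta$ does not depend on the base point $\sigma$ — this is exactly where the uniformity in $\tau$ of the a priori estimate (ii) is used — so starting from $\tau = 0$ (assuming $\Upsilon_0$ surjective) and advancing in increments smaller than $\delta$, one reaches $\tau = 1$ after at most $\lceil 1/\delta\rceil$ steps, giving that $\Upsilon_1$ is surjective; the reverse implication is identical with the roles of $0$ and $1$ interchanged. I do not anticipate a genuine obstacle here: the only items needing a moment's care are checking that surjectivity together with (ii) really produces a bounded inverse rather than merely a bijection, and confirming that the step length $\delta$ is uniform in the base point so that finitely many steps cover $[0,1]$.
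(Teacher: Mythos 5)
Your proof is correct and is the standard method-of-continuity argument; the paper does not reprove this proposition but cites Jost \cite{Jost2007}, where exactly this contraction-mapping argument (boundedness of $\Upsilon_\sigma^{-1}$ from the a priori estimate, a Banach fixed-point step of uniform length $\delta = 1/(2C\|\Upsilon_1-\Upsilon_0\|)$, and finitely many steps to traverse $[0,1]$) appears. No gaps: you correctly note that completeness of $X$ is needed for the fixed-point step and that the uniformity in $\tau$ of hypothesis (ii) is what makes the step length independent of the base point.
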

To apply the method of continuity to our case, we take the function spaces  to be $X = [H^{2s, p}(\mathbb{R}^{d})]^{d} $ and $V = [L^{p}(\mathbb{R}^{d})]^{d}$,  and for $\lambda > 0$,  the operators we consider are  $\Upsilon_0 = \alpha_1(-\mathring{\Delta})^{s} + \lambda \mathbb{I}$, where the operator $(-\mathring{\Delta})^{s} $ is defined as in  \eqref{fractional-Lame} and $\alpha_1$ is the ellipticity constant in \eqref{Ellipticity}, and $\Upsilon_1 = \mathbb{L}  + \lambda \mathbb{I}$.  It follows that the convex combination of the operators is 
\[
\Upsilon_\tau := (1 - \tau)\alpha_1(- \mathring{\Delta})^s + \tau\mathbb{L} + \lambda \mathbb{I}, \quad \tau\in [0,1]. 
\]
Let us introduce the  kernel
\[
K_\tau(\bm{y}) := \frac{\alpha_1(1 - \tau)}{|\bm{y}|^{d+2s}} + \tau K(\bm{y}), \quad \text{for $\bm{y} \neq {\bfs 0}$ }. 
\]
The kernel satisfies  the same ellipticity bounds as $K$. In particular, since  $(1-s)\alpha_1 \leq K(\bm{y}) |\bm{y}|^{d+2s} \leq (1-s)\alpha_2$, then
\[
(1-s)\alpha_1 \leq K_\tau(\bm{y}) |\bm{y}|^{d+2s} \leq (1-s)\alpha_2, \quad \text{uniformly in } \tau \in [0,1].
\]
Let $\mathbb{L}_{\tau}$ be the nonlocal operator of the form \eqref{leading-operator}  associated with the kernel $K_{\tau}$. Then $\mathbb{L}_{\tau} = (1 - t)\alpha_1(- \mathring{\Delta})^s + \tau\mathbb{L}$, and $\Upsilon_\tau = \mathbb{L}_{\tau}  + \lambda \mathbb{I}.$

For $1<p<\infty$,  the boundedness of $(-\mathring{\Delta})^{s}:[H^{2s, p}(\mathbb{R}^{d})]^{d}  \to [L^{p}(\mathbb{R}^{d})]^{d}$  is established in \cite[Theorem 6.3]{scott2022paper} using using tools from harmonic analysis. Alternatively, we can show that directly after recalling that for vector fields $\bm{u}$ in the Schwartz space $[\mathcal{S}]^d$, we have 
\begin{equation}\label{Fourier-vector-Lap}
\mathcal{F}((-\mathring{\Delta})^{s}\bm{u}) = (2\pi |{\bfs \xi}|)^{2s}\left(\ell_1\mathbb{I} + \ell_{2} \frac{{\bfs \xi}\otimes {\bfs\xi}}{ |{\bfs \xi}|^2}\right) \mathcal{F}({\bfs u})
\end{equation}
for some positive constants $\ell_1$ and $\ell_2$, $\ell_1 \neq \ell_2,$  depending only on $d$ and $s,$ \cite{scott2020thesis,MENGESHA2020123919}. Then we may write 
\[
\begin{split}
(-\mathring{\Delta})^{s}\bm{u} &= \mathcal{F}^{-1}\left((2\pi |{\bfs \xi}|)^{2s}\left(\ell_1\mathbb{I} + \ell_{2} \frac{{\bfs \xi}\otimes {\bfs\xi}}{ |{\bfs \xi}|^2}\right) \mathcal{F}({\bfs u})\right)\\
&=\ell_1(-\Delta)^{ s} \bm{u} + \ell_2 (\mathcal{R}\otimes \mathcal{R})(-\Delta)^{s}\bm{u}
\end{split}
\] 
where  $\mathcal{R}$ is the Riesz transform with the Fourier symbol $c \imath \frac{\bm \xi}{| \bm\xi|}$. The $L^{p}$-boundedness of $(-\mathring{\Delta})^{s}\bm{u} $ now follows since $\mathcal{R}$  is an $L^{p}$-bounded operator. 
The surjectivity of $\Upsilon_0$ is also proved in \cite{MENGESHA2020123919} (see also \cite[Theorem 6.5]{scott2022paper}) using Fourier multiplier theorems. 

Thus, to apply \Cref{moc} to prove the unique $L^{p}$-solvability of \eqref{system}, it remains to show that $\mathbb{L} :[H^{2s, p}(\mathbb{R}^{d})]^{d}  \to [L^{p}(\mathbb{R}^{d})]^{d}$ is continuous and to establish the {\em a priori} estimates that there exists $C>0$ such that 
\[
\|\bm{u}\|_{[H^{2s, p}(\mathbb{R}^{d})]^d }\leq C \|\Upsilon_{\tau} {\bm u }\|_{[L^{p}(\mathbb{R}^{d})]^d}, \,\text{for all $\bm{u}\in [H^{2s, p}(\mathbb{R}^{d})]^d$ and $\tau\in [0, 1]$.}
\]
These results will be established in the next sections following the real-analytic approach used in \cite{Dong:2023aa}. 
\section{The continuity of $\mathbb{L}$ and {\em a priori} estimates}\label{Cont-and-aPrior}
As previously shown, the continuity of the operator $\mathbb{L}$ is straightforward for case $p=2,$ by using the Fourier transform. Specifically, the continuity follows easily from the estimate of its matrix Fourier symbol,  $|\mathbb{M}({\bfs \xi})| \leq C|{\bfs \xi}|^{2s}$, as detailed in \cite[Theorem 3.2]{MENGESHA2020123919}.  In contrast, establishing the  continuity of $\mathbb{L}$ is nontrivial for $p\neq 2$, a difficulty that persists even for its scalar analog,  $\mathcal{L}$ defined in \eqref{scaler-main-op}. In recent work \cite{Dong:2023aa}, the authors give an indirect proof for the continuity of $\mathcal{L}$ based on an  {\em a priori} estimate for the solutions of fractional parabolic equations \cite[Remark 2.7]{Dong:2023aa}.  We will demonstrate how this indirect approach can be adapted to establish the continuity of $\mathbb{L}$.   To that end, we first state   an  {\em a priori} estimate for solutions of fractional parabolic systems of linear equations.  
Before we state this auxiliary result, let us introduce some notation following \cite{Dong:2023aa}. For $-\infty<T_1<T_2<\infty$ and a given separable Banach space $Y$, the function space
$L^{p}((T_1 ,T_2); Y)$ is  the set of $Y$-valued measurable functions $h$ such that 
\[
\|h\|_{L^{p}((T_1 ,T_2); Y)} :=\left(\int_{T_1} ^{T_2} \|h(t)\|^{p}_{Y} \ dt \right)^{1/p} <\infty. 
\]
If $Y=L^{p}(\mathbb{R}^{d})$, we simply write $L^{p}((T_1 ,T_2)\times \mathbb{R}^{d})$.   For $s\in(0,1)$, we denote $\mathbb{H}^{(2s, p); 1}(T_1 ,T_2)$ to be the collection of functions such that $\bm{u}\in L^{p}((T_1 ,T_2);[H^{2s, p}(\mathbb{R}^{d})]^{d} )$, $\partial_{t} \bm{u}\in L^{p}((T_1 ,T_2)\times\mathbb{R}^{d})$, and
$$
\|\bm{u}\|_{\mathbb{H}^{(2s, p); 1}(T_1 ,T_2)}:=\left(\int_{T_1}^{T_2}\|\bm{u}(t,\cdot)\|^{p}_{[H^{2s, p}(\mathbb{R}^{d})]^d } \ dt\right)^{1/p} + \|\partial_{t}\bm{u}\|_{L^{p}((T_1 ,T_2)\times\mathbb{R}^{d})}.
$$
We write $\bm{u}\in \mathbb{H}_0^{(2s, p); 1}(T_1 ,T_2)$ if there exists a sequence of functions $\{\bm{u}_{n}\}$ such that $\bm{u}_{n}\in [C^{\infty}([T_1,T_2]\times\mathbb{R}^{d})]^d$ with $\bm{u}_{n}(T_1,\bm{x})=0$ for large $|\bm{x}|$, and
\[
\|\bm{u}_{n} - \bm{u}\|_{\mathbb{H}^{(2s, p); 1}(T_1 ,T_2)} \to 0 \quad \text{as $n\to \infty$}. 
\]
Moreover, for a domain $\Omega\subset\mathbb{R}^{d}$ and $\bm{u}$ defined on $(T_1 ,T_2)\times\Omega$ we write $\bm{u}\in \mathbb{H}^{(2s,p);1}_{0}((T_1 ,T_2)\times\Omega)$ if there exists an extension of $\bm{u}$ to $(T_1 ,T_2)\times\mathbb{R}^{d},$ i.e.,
$$
\|\bm{u}\|_{\mathbb{H}^{(2s,p);1}_{0}((T_1 ,T_2)\times\Omega)}:=\inf\{\|\overline{\bm{u}}\|_{\mathbb{H}^{(2s,p);1}((T_1 ,T_2))}:\overline{\bm{u}}\in\mathbb{H}^{(2s,p);1}_{0}(T_1 ,T_2) \text{ and } \overline{\bm{u}}|_{(T_1,T_2)\times\Omega}=\bm{u}\}<\infty.
$$
Furthermore, we take $\psi(\bm{x})=1/(1+|\bm{x}|^{d+2s})$ to introduce the weighted space $L^{1}(\mathbb{R}^{d},\psi)$ and denote 
\begin{equation}
\label{eq:2.1}
\|\bm{u}\|_{L^{p}((0,T);L^{1}(\mathbb{R}^{d},\psi))}:=\|\psi \bm{u}\|_{L^{p}((0,T);L^{1}(\mathbb{R}^{d}))},
\end{equation}

For $T\in(0,\infty)$ we denote $(0,T)\times\mathbb{R}^{d}:=\mathbb{R}_{T}^{d}$,  and write  $\mathbb{H}^{(2s, p); 1}(T):=\mathbb{H}^{(2s, p); 1}(0,T)$ and $\mathbb{H}_{0}^{(2s, p);1}(T):=\mathbb{H}_{0}^{(2s, p);1}(0,T)$. We use the notation $\bm{u}\in\mathbb{H}_{0, loc}^{(2s, p);1}(\mathbb{R}_{T}^{d})$ to indicate a function satisfying $\bm{u}\in\mathbb{H}_{0}^{(2s, p);1}((0,T)\times B_{R})$ for all $R>0$.

We are now ready to state an auxiliary result that will be used to prove the continuity of $\mathbb{L}$ as well as {\em a priori} estimates. 
\begin{theorem}
\label{timedep}
Let $s \in (0,1)$, $\lambda \geq 0$,  $p \in (1,\infty)$, $T \in (0, \infty)$ and $\bm{g} \in [L^p(\mathbb{R}_{T}^d)]^d$.  Suppose that the kernel $K$ satisfies \eqref{Ellipticity} and \eqref{cancellation} and that $\mathbb{L}$ is as defined  in \eqref{leading-operator} for smooth functions. Then $\partial_{t} + \mathbb{L}$ has a unique continuous extension from $\mathbb{H}_{0}^{(2s, p);1}(0,T)$ to $[L^{p}(\mathbb{R}^{d}_{T})]^{d}$. Moreover,  there exists $N>0$, depending only on $d$, $s$, $p$, $\alpha_1$ and $\alpha_2$,  such that 
\begin{enumerate}
\item[(A)] 
the fractional parabolic system of equations 
\begin{equation}\label{eqtimedep}
\partial_t \bm{u} + (-\Delta)^s \bm{u} + \lambda \bm{u}  = \bm{g} \quad \text{in } \mathbb{R}_{T}^d 
\end{equation}
has a unique solution $\bm{u}\in \mathbb{H}_{0}^{(2s, p);1}(0,T)$ and 
\begin{equation}
\label{esttimedep}
\|\partial_t  \bm{u}\|_{[L^p(\mathbb{R}_{T}^d)]^d} +  \|\mathbb{L} \bm{u}\|_{[L^p(\mathbb{R}_{T}^d)]^d} + \lambda \| \bm{u}\|_{[L^p(\mathbb{R}_{T}^d)]^d}   \leq N \|\bm{g}\|_{[L^p(\mathbb{R}_{T}^d)]^d}.  
\end{equation}
\item[(B)] Moreover, the strongly-coupled fractional parabolic system of equations 
\begin{equation}\label{eqtimedep-apriori}
\partial_t \bm{u} + \mathbb{L} \bm{u} + \lambda \bm{u}  = \bm{g} \quad \text{in } \mathbb{R}_{T}^d, 
\end{equation}
has a unique solution $\bm{u}\in  \mathbb{H}_{0}^{(2s, p);1}(0,T)$ and  
\begin{equation}
\label{esttimedep-apriori}
\|\partial_t  \bm{u}\|_{[L^p(\mathbb{R}_{T}^d)]^d} +  \|(-\Delta)^s \bm{u} \|_{[L^p(\mathbb{R}_{T}^d)]^d} + \lambda \| \bm{u}\|_{[L^p(\mathbb{R}_{T}^d)]^d}   \leq N \|\bm{g}\|_{[L^p(\mathbb{R}_{T}^d)]^d}. 
\end{equation}
\end{enumerate}
\end{theorem}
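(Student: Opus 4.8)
The plan is to reduce Theorem~\ref{timedep} to the corresponding scalar parabolic result from \cite{Dong:2023aa} and then bootstrap to the coupled system via the method of continuity applied in the parabolic setting. First I would treat part (A): the scalar fractional heat equation $\partial_t w + (-\Delta)^s w + \lambda w = g$ is solved componentwise — applying the scalar theorem of \cite{Dong:2023aa} (or \cite{Dong2023}) to each of the $d$ components of $\bm g$ yields a unique solution $\bm u \in \mathbb{H}_0^{(2s,p);1}(0,T)$ with the standard estimate $\|\partial_t \bm u\|_{L^p} + \|(-\Delta)^s \bm u\|_{L^p} + \lambda\|\bm u\|_{L^p} \le N\|\bm g\|_{L^p}$. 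To upgrade this to the bound \eqref{esttimedep} involving $\|\mathbb{L}\bm u\|_{L^p}$, I would invoke the continuity of $\mathbb{L}:[H^{2s,p}]^d \to [L^p]^d$ — but since that continuity is itself one of the conclusions being proved, the honest route is to first establish that $\partial_t + \mathbb{L}$ has a continuous extension from $\mathbb{H}_0^{(2s,p);1}$ to $[L^p(\mathbb{R}_T^d)]^d$, and this extension statement is exactly where the real-analytic machinery of \cite{Dong:2023aa} (level-set estimates, crawling-of-ink-spots, bootstrap from $p=2$) is used. I would carry out the extension argument as in \cite[Section~2]{Dong:2023aa}, noting that the rank-one matrix factor $\bm y\otimes\bm y/|\bm y|^2$ is bounded and even, so none of the scalar estimates degrade; the $p=2$ base case uses the Fourier symbol bound $|\mathbb{M}(\bm\xi)| \le C|\bm\xi|^{2s}$ from \cite[Theorem~3.2]{MENGESHA2020123919}, and Proposition~\ref{commutestar} lets one mollify freely.

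For the existence-and-estimate part of (B), the clean approach is the method of continuity \emph{in the parabolic space}: set $X = \mathbb{H}_0^{(2s,p);1}(0,T)$, $V = [L^p(\mathbb{R}_T^d)]^d$, and interpolate between $\Upsilon_0 = \partial_t + \alpha_1(-\mathring\Delta)^s + \lambda$ and $\Upsilon_1 = \partial_t + \mathbb{L} + \lambda$ through $\Upsilon_\tau = \partial_t + \mathbb{L}_\tau + \lambda$, where $\mathbb{L}_\tau$ is attached to the kernel $K_\tau$ of Section~\ref{M-o-C} which obeys the same ellipticity constants uniformly in $\tau$. The base operator $\Upsilon_0$ is surjective because $(-\mathring\Delta)^s = \ell_1(-\Delta)^s + \ell_2(\mathcal R\otimes\mathcal R)(-\Delta)^s$ and the Riesz transforms decouple into scalar parabolic problems solvable by part (A). The crucial input for Proposition~\ref{moc} is the uniform-in-$\tau$ a priori estimate $\|\bm u\|_X \le C\|\Upsilon_\tau \bm u\|_V$; this follows from the parabolic analog of the iteration in \cite{Dong:2023aa}, where one proves \eqref{esttimedep-apriori} directly for all kernels satisfying \eqref{Ellipticity}–\eqref{cancellation} simultaneously. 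Concretely: establish the estimate for $p=2$ via Plancherel and the symbol bounds (here one needs a lower ellipticity bound on $\mathrm{Re}(\mathbb{M}_\tau(\bm\xi) + \lambda\mathbb{I})$ — available since $\bm y\otimes\bm y/|\bm y|^2 \ge 0$ and $K_\tau \ge (1-s)\alpha_1|\bm y|^{-(d+2s)}$), then run the level-set / mean-oscillation bootstrap from $L^2$ to $L^p$ for $p\in(2,\infty)$, and cover $p\in(1,2)$ by a duality argument against $\mathbb{L}_\tau^*$, which has kernel $K_\tau^*(\bm y) = K_\tau(-\bm y)$ satisfying the same hypotheses.

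The main obstacle I anticipate is the mean-oscillation / level-set estimate that drives the bootstrap: one must show that for a solution of $\partial_t\bm u + \mathbb{L}_\tau\bm u = 0$ in a parabolic cylinder $Q$, the quantity $(-\Delta)^s\bm u$ has small oscillation on sub-cylinders once the nonlocal ``tail'' is controlled. In the scalar case \cite{Dong:2023aa} this rests on a Caccioppoli-type inequality and an iteration using the self-improving nature of the equation; for the coupled system the difficulty is that $\mathbb{L}_\tau$ does not satisfy a maximum principle, so one cannot use pointwise comparison. The workaround, following the philosophy of \cite{Dong:2023aa}, is to avoid maximum principles entirely and instead use the $L^2$ a priori estimate as the base of an inductive scheme, combined with the fact that differences of solutions still solve the homogeneous equation (by linearity and Proposition~\ref{commute}, since $\mathbb{L}_\tau$ commutes with $(-\Delta)^s$). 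I would also need to verify carefully that the matrix factor does not interfere with the freezing-the-kernel argument: since $\bm y\otimes\bm y/|\bm y|^2$ is $\bm y$-dependent but bounded and smooth away from the origin, it behaves like a bounded coefficient and the estimates in \cite{Dong:2023aa} go through with $N$ acquiring only dimensional dependence. Finally, the uniqueness in both (A) and (B) follows from the a priori estimates applied to the difference of two solutions with $\bm g = 0$.
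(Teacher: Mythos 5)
Your proposal captures the correct broad architecture of the proof — bootstrap from a Fourier-analytic $p=2$ base case, a level-set estimate fed into a bootstrap that increases $p$ by a uniform increment, and duality for $p<2$ — and you correctly flag that the a priori estimate controlling $\|\mathbb{L}\bm u\|_{L^p}$ in part (A) is precisely where the real-analytic machinery must be deployed to avoid circularity. You also correctly identify the role of Propositions~\ref{commute} and~\ref{commutestar}, the sign/coercivity of $\mathbb{M}(\bm\xi)$ coming from $\bm y\otimes\bm y/|\bm y|^2\ge 0$ and the lower ellipticity bound, the uniform-in-$\tau$ ellipticity of $K_\tau$, and the solvability of the base operator via the Riesz-transform decomposition of $(-\mathring\Delta)^s$.

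Where you deviate from the paper, and where the proposal is vague enough to hide a genuine gap, is in the \emph{mechanism of the level-set estimate}. You anticipate a Caccioppoli-type inequality, a mean-oscillation / freezing-the-kernel argument, and ``small oscillation of $(-\Delta)^s\bm u$ on sub-cylinders.'' None of these appear. What the paper actually does (Proposition~\ref{decomposition}) is a \emph{solution decomposition}: on a parabolic cylinder $Q_R$, one writes $\bm u=\bm w+\bm v$, where $\bm w$ is the $\mathbb{H}_0^{(2s,p);1}$ solution to $\partial_t\bm w+(-\Delta)^s\bm w+\lambda\bm w=\zeta\bm f$ with $\zeta$ a cutoff, and $\bm v=\bm u-\bm w$ solves the equation with right-hand side $(1-\zeta)\bm f$ vanishing near the cylinder. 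The estimate on $\bm w$ comes from the theorem at the current exponent $p_0$; the estimate on $\bm v$ is a \emph{local higher integrability gain} from the parabolic Sobolev embedding (Lemma~\ref{cor}, quoted from \cite{Dong:2023aa}), applied after mollifying in $\bm x$, commuting $\mathbb{L}$ with $(-\Delta)^s$ and with convolution, and accounting for the nonlocal tail through the weighted $L^1(\psi)$ norm. This gives a $p_1>p_0$ bound on $\mathbb{L}\bm v$ and a $p_0$ bound on $\mathbb{L}\bm w$, and Chebyshev plus crawling-of-ink-spots produces the density estimate $|\mathcal A(\kappa\tau)|\le N\gamma|\mathcal B_\gamma(\tau)|$. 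There is no Caccioppoli inequality and no comparison against a frozen constant-coefficient operator; the uniform $p_1-p_0\ge\delta(d,s)$ increment is purely Sobolev-theoretic.

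A second point: you propose running the bootstrap \emph{directly} on $\partial_t+\mathbb{L}_\tau+\lambda$ to obtain \eqref{esttimedep-apriori} uniformly in $\tau$. The paper instead runs the bootstrap on the decoupled equation $\partial_t\bm u+(-\Delta)^s\bm u+\lambda\bm u=\bm f$, tracking the level sets of $|\mathbb{L}\bm u|$ — i.e., part (A) — and then leverages the resulting continuity of $\mathbb{L}$ (Corollary~\ref{Lbdd}) and the method of continuity to obtain part (B). The paper's choice is not cosmetic: decomposing for the $(-\Delta)^s$ equation means that the auxiliary solution $\bm w$ can be produced by the \emph{scalar} theory of \cite{Dong:2023aa} applied componentwise, so existence for the current exponent is never in question. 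Your route (decomposing for the $\mathbb{L}_\tau$ equation) is in principle viable as an interlocked induction — existence of $\bm w$ at exponent $p_0$ is provided by the inductive hypothesis on part (B) — but you would need to state and control this interlocking carefully; as written, the proposal glides over where the existence of the local corrector at each iteration step comes from. You should make the decomposition explicit and be clear that it is anchored to the $(-\Delta)^s$ equation if you want to avoid having to re-establish solvability of the $\mathbb{L}_\tau$ parabolic problem at each stage.
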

We remark that for a given $\lambda > 0$, by considering \eqref{eqtimedep} component wise \cite[Theorem 2.6]{Dong:2023aa} guarantees the existence of a unique solution $\bm{u} \in \mathbb{H}_{0}^{(2s, p);1}(0,T) $ accompanied by the estimate
\begin{equation}
\label{basicone}
\|\partial_t  \bm{u}\|_{[L^p(\mathbb{R}_{T}^d)]^d} +  \|\mathcal{L} \bm{u}\|_{[L^p(\mathbb{R}_{T}^d)]^d} + \lambda \| \bm{u}\|_{[L^p(\mathbb{R}_{T}^d)]^d}   \leq N \|\bm{g}\|_{[L^p(\mathbb{R}_{T}^d)]^d}, 
\end{equation}
where $\mathcal{L}$ is as defined in \eqref{scaler-main-op}. 
However, it is not clear how to obtain \eqref{esttimedep} from the component-wise estimate. This is partly because $\mathbb{L}$ is a strongly-coupled operator and no direct comparison between $\|\mathcal{L}u\|_{L^{p}(\mathbb{R}_{T}^d)}$ and $\|\mathbb{L} \bm{u}\|_{[L^p(\mathbb{R}_{T}^d)]^d}$ is available yet for $p\neq 2$.  Thus, the point of part (A) of Theorem \ref{timedep} is to establish the validity of \eqref{basicone} proved in \cite[Theorem 2.6]{Dong:2023aa} when $\mathcal{L}$ is substituted by the strongly-coupled operator $\mathbb{L}$. The estimate \eqref{esttimedep} will be proved following the approach used in \cite{{Dong:2023aa}} to prove \eqref{basicone}.  We will demonstrate below that part (A) of Theorem \ref{timedep} is sufficient to prove that  $\mathbb{L}$ has a unique continuous extension from  $[H^{2s,p}(\mathbb{R}^d )]^d$ to $[L^p(\mathbb{R}^d )]^d$. Again, following the approach used in \cite{{Dong:2023aa}}, one can use the continuity of $\mathbb{L}$ to establish the existence of a unique solution (corresponding to $\lambda > 0$) to the strongly-coupled fractional parabolic system of equation \eqref{eqtimedep-apriori}  as well as the associated estimate \eqref{esttimedep-apriori}.

Once Theorem \ref{timedep} is established, the continuity of $\mathbb{L}$ and the necessary {\em a priori } estimates for applying the method of continuity immediately follow as a corollary, see \cite[Remark 2.7]{Dong:2023aa}. 
\begin{corollary} \label{Lbdd}
Let $s \in (0,1)$, $\lambda \geq 0$,  $p \in (1,\infty)$. Suppose that the kernel $K$ satisfies \eqref{Ellipticity} and \eqref{cancellation} and that $\mathbb{L}$ is as defined  in \eqref{leading-operator} for smooth functions.
 Then there exists $N>0$, depending only on $d$, $s$, $p$, $\alpha_1$ and $\alpha_2$,  such that  
 \begin{equation}\label{contL}
  \left\|\mathbb{L} \bm{v} \right\|_{[L^p (\mathbb{R}^d )]^d}  \leq N \left\|(-\Delta)^s \bm{v}\right\|_{[L^p (\mathbb{R}^d)]^d},\quad \forall \, \bm{v}\in [H^{2s,p}(\mathbb{R}^d )]^d. 
 \end{equation}
  Moreover, for any $\lambda>0$ and any $ \bm{v}\in [H^{2s,p}(\mathbb{R}^d )]^d$, 
 \begin{equation}\label{aprioriL}
 \begin{split}
 \|(-\Delta)^s \bm{v}\|_{[L^p(\mathbb{R}^d)]^d} + \lambda \|\bm{v}\|_{[L^p(\mathbb{R}^d)]^d} &\leq N \|\mathbb{L}\bm{v} + \lambda \bm{v}\|_{[L^p(\mathbb{R}^d)]^d}\\
  \|\bm{v}\|_{[H^{2s, p}(\mathbb{R}^{d})]^{d}} &\leq  N\max\{1, \lambda^{-1}\} \| \mathbb{L}\bm{v} + \lambda \bm{v}\|_{[L^{p}(\mathbb{R}^{d})]^{d}}. 
 \end{split}
 \end{equation}
\end{corollary}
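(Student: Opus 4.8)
The plan is to derive the corollary from Theorem \ref{timedep} by a standard "freezing the spatial problem inside a parabolic cylinder" argument, exactly as in \cite[Remark 2.7]{Dong:2023aa}. First I would prove the bound \eqref{contL}. Fix $\bm v \in [H^{2s,p}(\mathbb R^d)]^d$ and, for a fixed $T>0$, define $\bm u(t,\bm x) = \eta(t)\, \bm v(\bm x)$ where $\eta \in C_c^\infty((0,T))$ is a fixed cutoff with, say, $\|\eta\|_{L^p(0,T)}=1$. By the density statement in Theorem \ref{timedep} (the unique continuous extension of $\partial_t + \mathbb L$ to $\mathbb H_0^{(2s,p);1}(0,T)$, together with the commuting property of $\mathbb L$ with spatial derivatives from Section \ref{prop-operator}), one checks that $\bm u \in \mathbb H_0^{(2s,p);1}(0,T)$ and $\mathbb L \bm u(t,\bm x) = \eta(t)\,\mathbb L\bm v(\bm x)$, so that $\bm u$ solves \eqref{eqtimedep} with $\lambda = 0$ and right-hand side $\bm g = \eta'(t)\bm v(\bm x) + \eta(t)(-\Delta)^s\bm v(\bm x)$. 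Apply estimate \eqref{esttimedep}: the left side controls $\|\mathbb L\bm u\|_{[L^p(\mathbb R^d_T)]^d} = \|\eta\|_{L^p(0,T)}\|\mathbb L\bm v\|_{[L^p]^d} = \|\mathbb L\bm v\|_{[L^p]^d}$, while the right side is $\le \|\eta'\|_{L^p}\|\bm v\|_{[L^p]^d} + \|\eta\|_{L^p}\|(-\Delta)^s\bm v\|_{[L^p]^d}$. This gives $\|\mathbb L\bm v\|_{[L^p]^d} \le C(\|\bm v\|_{[L^p]^d} + \|(-\Delta)^s\bm v\|_{[L^p]^d})$, which is almost \eqref{contL} but with the extra lower-order term $\|\bm v\|_{[L^p]^d}$. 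To remove it, use the scaling invariance of the homogeneous estimate: replace $\bm v$ by $\bm v_r(\bm x) := \bm v(r\bm x)$. Since $\mathbb L$ and $(-\Delta)^s$ are both homogeneous of order $2s$ under dilations (the kernel $K$ is comparable to $|\bm y|^{-(d+2s)}$, and both sides of the inequality scale the same way after a change of variables), while $\|\bm v_r\|_{[L^p]^d}$ scales with a different power of $r$, letting $r\to\infty$ (or $r\to 0$, whichever kills the lower-order term) eliminates the $\|\bm v\|_{[L^p]^d}$ contribution and yields \eqref{contL}. Alternatively, one can invoke the explicit scaling argument of \cite{Dong:2023aa} directly.

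Next I would prove the first inequality in \eqref{aprioriL}. Fix $\lambda > 0$ and $\bm v \in [H^{2s,p}(\mathbb R^d)]^d$; set $\bm f := \mathbb L\bm v + \lambda\bm v \in [L^p(\mathbb R^d)]^d$ (well-defined by \eqref{contL}). With the same device $\bm u(t,\bm x) = \eta(t)\bm v(\bm x)$, $\bm u$ now solves \eqref{eqtimedep-apriori} with right-hand side $\bm g(t,\bm x) = \eta'(t)\bm v(\bm x) + \eta(t)\bm f(\bm x)$. Applying the \emph{a priori} estimate \eqref{esttimedep-apriori} yields
\[
\|\eta\|_{L^p(0,T)}\big(\|(-\Delta)^s\bm v\|_{[L^p]^d} + \lambda\|\bm v\|_{[L^p]^d}\big) \le N\big(\|\eta'\|_{L^p(0,T)}\|\bm v\|_{[L^p]^d} + \|\eta\|_{L^p(0,T)}\|\bm f\|_{[L^p]^d}\big).
\]
This has the right form except for the $\|\eta'\|_{L^p}\|\bm v\|_{[L^p]^d}$ term on the right. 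One absorbs it by choosing $T$ large and rescaling $\eta$: taking $\eta_T(t) = \eta(t/T)$ on $(0,T)$ makes $\|\eta_T'\|_{L^p(0,T)}/\|\eta_T\|_{L^p(0,T)} = T^{-1}\|\eta'\|_{L^p(0,1)}/\|\eta\|_{L^p(0,1)} \to 0$ as $T\to\infty$, and since the constant $N$ in \eqref{esttimedep-apriori} is independent of $T$, in the limit the troublesome term disappears, giving exactly the first line of \eqref{aprioriL}. The second line of \eqref{aprioriL} then follows immediately from the norm equivalence in the Remark after Theorem \ref{mainthr}: $\|\bm v\|_{[H^{2s,p}]^d} \approx \|\bm v\|_{[L^p]^d} + \|(-\Delta)^s\bm v\|_{[L^p]^d} \le \max\{1,\lambda^{-1}\}\big(\lambda\|\bm v\|_{[L^p]^d} + \|(-\Delta)^s\bm v\|_{[L^p]^d}\big) \le N\max\{1,\lambda^{-1}\}\|\mathbb L\bm v + \lambda\bm v\|_{[L^p]^d}$.

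The main obstacle I anticipate is the careful verification that the "separated" function $\bm u(t,\bm x)=\eta(t)\bm v(\bm x)$ genuinely lies in $\mathbb H_0^{(2s,p);1}(0,T)$ and that $(\partial_t+\mathbb L)\bm u$ computed via the continuous extension agrees with $\eta'\bm v + \eta\,\mathbb L\bm v$ — this requires a density argument (approximate $\bm v$ by Schwartz fields, use Proposition \ref{commutestar} and the continuity of the spatial extension of $\mathbb L$ from Theorem \ref{timedep}) and is a little delicate because $\bm v$ is only in a Bessel-potential space, not smooth or compactly supported. The second, more cosmetic, obstacle is the scaling/large-$T$ bookkeeping needed to discard the lower-order terms; here one must be sure that the constant $N$ furnished by Theorem \ref{timedep} really is independent of $T$ (which it is, as stated), so that passing to the limit is legitimate. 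Everything else is routine.
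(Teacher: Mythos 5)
Your proposal is correct and follows essentially the same route as the paper: test the parabolic estimate of Theorem \ref{timedep} against the separated function $\eta(t)\bm v(\bm x)$ and then kill the lower-order $\|\bm v\|_{L^p}$ term by a scaling limit. The one place you deviate is in how you remove that lower-order term in \eqref{contL}: you propose dilating in the \emph{spatial} variable ($\bm v_r(\bm x)=\bm v(r\bm x)$, $r\to\infty$), whereas the paper rescales the \emph{time} cutoff ($\eta(t/T)$, $T\to\infty$), which is the simpler bookkeeping and is what you use for \eqref{aprioriL} anyway. Your spatial-scaling version does work, but only because the constant $N$ in Theorem \ref{timedep} depends on $K$ solely through the ellipticity constants $\alpha_1,\alpha_2$, and the rescaled kernel $K_r(\bm z):=r^{d+2s}K(r\bm z)$ satisfies the same bounds \eqref{Ellipticity} with the same constants (and for $s=1/2$ the cancellation condition \eqref{cancellation} lets one absorb the rescaled cutoff $\chi^{(s)}(r\,\cdot)$). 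Strictly speaking, $\mathbb L$ is not homogeneous under dilations for general $K$: one gets $\mathbb L\bm v_r(\bm x)=r^{2s}\bigl(\mathbb L_r\bm v\bigr)(r\bm x)$ with $\mathbb L_r$ the operator with kernel $K_r$, not $\mathbb L$ itself. So the argument really runs by applying the parabolic estimate to $\mathbb L_r$ and $\bm v$ rather than to $\mathbb L$ and $\bm v_r$; you should state this explicitly, since as written the phrase ``$\mathbb L$ is homogeneous of order $2s$'' is false for general $K$. With that caveat made explicit, the argument is sound, and the rest of the proposal (in particular the derivation of \eqref{aprioriL} from part~(B) of Theorem \ref{timedep} and the norm equivalence) matches the paper's proof.
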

\begin{proof} 
Let $\bm{v} \in [H^{2s,p}(\mathbb{R}^d )]^d $.    
Let $\eta \in C^\infty_0(\mathbb{R})$ such that $\eta(0)=0$. For any given $T \in (0, \infty)$ 
\[
\bm{u}_T(t, \bm{x}) := \eta\left(t/T\right) \bm{v}(\bm{x}), \quad (t, \bm{x} ) \in \mathbb{R}^d_{T}
\]
Then it is clear that $\bm{u}_T \in \mathbb{H}^{(2s,p);1}_0(0, T)$   and 
\begin{equation}\label{cont-T-version}
\partial_t \bm{u}_T + (-\Delta)^s \bm{u}_T = \frac{1}{T}\eta'\left(t/T\right) \bm{v} + \eta\left(t/T\right) (-\Delta)^s \bm{v}  \quad \text{in}  \ \mathbb{R}^d_{T} \quad \text{for all } T \in (0, \infty). 
\end{equation}
Also, for any $\lambda >0$, 
\begin{equation}\label{apriori-T-version}
\partial_t \bm{u}_T + \mathbb{L} \bm{u}_T  + \lambda \bm{u}_{T}= \frac{1}{T}\eta'\left(t/T\right) \bm{v} + \eta\left(t/T\right) (\mathbb{L}  \bm{v}  + \lambda \bm{v}) \quad \text{in}  \ \mathbb{R}^d_{T} \quad \text{for all } T \in (0, \infty). 
\end{equation}
Applying the {\em a priori estimate} \eqref{esttimedep} in part (A) of Theorem \ref{timedep} for $\lambda =0$ to \eqref{cont-T-version}, we have for all $T \in (0, \infty)$ that
\begin{align*}
\|\mathbb{L} \bm{u}_T\|_{[L^p (\mathbb{R}_T^d)]^d} &\leq N \left\|\frac{1}{T}\eta'\left(t/T\right) \bm{v} + \eta\left(t/T\right) (-\Delta)^s \bm{v}\right\|_{[L^p (\mathbb{R}_T^d)]^d}\\
&\leq N \left\|\frac{1}{T}\eta'\left(t/T\right) \bm{v} \right\|_{[L^p (\mathbb{R}_T^d)]^d} +N \left\| \eta\left(t/T\right) (-\Delta)^s \bm{v}\right\|_{[L^p (\mathbb{R}_T^d)]^d}\\
&= N \left\|\frac{1}{T}\eta'\left(t/T\right)  \right\|_{L^p (0,T)} \left\|  \bm{v} \right\|_{[L^p (\mathbb{R}^d)]^d} +N \left\| \eta\left(t/T\right)\right\|_{L^p (0,T)} \left\|(-\Delta)^s \bm{v}\right\|_{[L^p (\mathbb{R}^d)]^d}.
\end{align*}
Using the fact that 
\begin{align*}
\|\mathbb{L} \bm{u}_T\|_{[L^p (\mathbb{R}_T^d)]^d} &= \left\|\eta\left(t/T\right) \mathbb{L} \bm{v}(\bm{x}) \right\|_{[L^p (\mathbb{R}_T^d)]^d}
 = \left\|\eta\left(t/T\right) \right\|_{L^p (0,T)} \left\|\mathbb{L} \bm{v}(\bm{x}) \right\|_{[L^p (\mathbb{R}^d )]^d},
\end{align*}
 we get 
\begin{align*}
 \left\|\mathbb{L} \bm{v}(\bm{x}) \right\|_{[L^p (\mathbb{R}^d )]^d} &\leq  N \frac{\left\|\frac{1}{T}\eta'\left(t/T\right)  \right\|_{L^p (0,T)}}{\left\| \eta\left(t/T\right)\right\|_{L^p (0,T)}} \left\|  \bm{v} \right\|_{[L^p (\mathbb{R}^d)]^d} +N  \left\|(-\Delta)^s \bm{v}\right\|_{[L^p (\mathbb{R}^d)]^d},
\end{align*}
Moreover, 
\[
\frac{\left\|\frac{1}{T}\eta'\left(t/T\right)  \right\|_{L^p (0,T)}}{\left\| \eta\left(t/T\right)\right\|_{L^p (0,T)}} =  \frac{ \frac{1}{T^{1+1/p}} \left\|\eta'\right\|_{L^p (0,1)}}{  \frac{1}{T^{1/p}} \left\| \eta\right\|_{L^p (0,1)}} \longrightarrow 0 \quad \text{as} \ T \to \infty.
\]
Estimate \eqref{aprioriL} also follows similarly by applying the {\em a priori estimate} in part (B) of Theorem \ref{timedep} to \eqref{apriori-T-version}. 
\end{proof}
We can now apply \eqref{apriori-T-version} to $\mathbb{L}_{\tau}$, see the previous section,  to obtain the {\em a priori} estimate and \eqref{cont-T-version} to complete the application of the method of continuity to prove Theorem \ref{mainthr}. 
\begin{remark}\label{Lbdd-time}
As a simple consequence of Corollary \ref{Lbdd}, we have the following results. Let $s \in (0,1)$, $\lambda \geq 0$,  $p \in (1,\infty)$. Suppose that the kernel $K$ satisfies \eqref{Ellipticity} and \eqref{cancellation}. Then 
\begin{enumerate}
\item for any $u\in [\mathbb{H}^{2s, p }(\mathbb{R}^d)]^d$
\[
\left\|(-\mathring{\Delta})^s \bm{v} \right\|_{[L^p (\mathbb{R}^d )]^d}\asymp\left\|\mathbb{L} \bm{v} \right\|_{[L^p (\mathbb{R}^d )]^d}  \asymp\left\|(-\Delta)^s \bm{v}\right\|_{[L^p (\mathbb{R}^d)]^d}\asymp \left\|\mathcal{L} \bm{v} \right\|_{[L^p (\mathbb{R}^d )]^d}. 
\]
This should be compared with the fractional Korn's inequality proved in \cite{Scott-Mengesha-Korn}. 
\item  Similarly, for any $\bm{u} \in \mathbb{H}^{(2s, p); 1}_{0}(\mathbb{R}^{d}_{T})$ and almost every $t\in (0, T)$, we have 
\[
\left\|\mathbb{L} \bm{u}(t, \cdot) \right\|_{[L^p (\mathbb{R}^d )]^d}  \leq N \left\|(-\Delta)^s \bm{u}(t, \cdot)\right\|_{[L^p (\mathbb{R}^d)]^d}. 
\]
Integrating in $t$ we obtain that 
\[
\left\|\mathbb{L} \bm{u} \right\|_{[L^p (\mathbb{R}_{T}^d )]^d}  \leq N \left\|(-\Delta)^s \bm{u}\right\|_{[L^p (\mathbb{R}_{T}^d)]^d}.
\]
\end{enumerate}
\end{remark}
\section{Estimates for solutions of fractional parabolic systems}\label{System-Parabolic}
In this section, we prove Theorem \ref{timedep} following the framework established in \cite{Dong:2023aa}. For maximum precision and ease of reference, we will directly quote necessary results from  \cite{Dong:2023aa} while accepting the inherent redundancy.  The proof of the {\em a priori} estimate for the solution to the system of parabolic fractional equations uses a level set argument to estimate the “layer-cake” representation of $\|\mathbb{L}\bm{u}\|_{[L^{p}]^d}$.  The proof proceeds via an iterative argument. The theorem's validity, which is known for $p=2$,  is extended to larger values of $p$ incrementally via careful decomposition and estimation of the solution. The critical feature is that the increment, which is possible by Sobolev embedding,
  is uniform, independent of the previous $p$ value. The uniformity ensures that the argument can be applied repeatedly to prove the theorem for a much larger range of $p$. 
  
  The proofs for parts (A) and (B) of Theorem \ref{timedep} are analogous in structure. Most importantly, unlike the scalar result in \cite{Dong:2023aa} where the arguments for $(-\Delta)^{s}$ and $\mathcal{L}$ were interchangeable, we cannot combine the proofs here. This is due to the fundamental difference between the strongly-coupled operator $\mathbb{L}$ and and the uncoupled operator $(-\Delta)^{s}$. Given the parallelism of the two arguments, we will detail the proof of part (A) of Theorem \ref{timedep},  while proving part (B) only in the case $p=2.$ 
  \subsection{The case for $p=2$}
  For part (A),  the existence and uniqueness of the solution of the equation
\[
\partial_t \bm{u} + (-\Delta)^s \bm{u} + \lambda \bm{u}  = \bm{f} \quad \text{in } \mathbb{R}^d \times (0, T)
\]
follows directly by \cite[Theorem 2.6]{Dong:2023aa} , applied component-wise. The estimate  $\eqref{esttimedep}$ in the case $p=2$, is a simple consequence of $\eqref{basicone}$ and Fourier transform. Indeed,  
\begin{align*}
\|\mathbb{L} \bm{u} \|_{[L^2((0, T) \times \mathbb{R}^d )]^d}^{2} =  \int_{0}^{T}\|\mathbb{L} \bm{u}(t, \cdot) \|^{2}_{[L^2(\mathbb{R}^d)]^d} \, dt
&=  \int_{0}^{T} \|\widehat{\mathbb{L} \bm{u}}(t, \cdot) \|^{2}_{[L^2(\mathbb{R}^d)]^d} \,dt \\
&= \int_{0}^{T} \|\mathbb{M}(\cdot) \widehat{\bm{u}}(t, \cdot) \|^2_{[L^2(\mathbb{R}^d)]^d} \,dt \\
&\leq C \int_{0}^{T}\||\cdot|^{2s} \widehat{\bm{u}}(t, \cdot)\|^2_{[L^2(\mathbb{R}^d)]^d} \, dt \\
&= C \|(-\Delta)^s \bm{u} \|^{2}_{[L^2((0, T) \times  \mathbb{R}^d)]^d}.
\end{align*}
For part (B), we use the method of continuity to prove existence of a  solution to \eqref{eqtimedep-apriori}. To that end, first note that the above calculation shows that  $\partial_{t} + \mathbb{L} + \lambda \mathbb{I} : \mathbb{H}_0^{(2s,2); 1}(T) \to [L^{2}(\mathbb{R}_{T}^{d})]^d$ is a continuous map. Next we obtain an  {\em a priori} estimate, which is given in the following lemma.
\begin{lemma}
    There exists a constant $N>0$ such that for any $\bm{u}\in \mathbb{H}_0^{(2s,2); 1}(T)$ and any $\lambda\geq 0$
   \begin{align*}
\|\partial_t  \bm{u}\|_{[L^2(\mathbb{R}_{T}^d)]^d} +  \|(-\Delta)^s \bm{u} \|_{[L^2(\mathbb{R}_{T}^d)]^d} + \lambda \| \bm{u}\|_{[L^2(\mathbb{R}_{T}^d)]^d} \leq N\|\partial_t \bm{u} + \mathbb{L} \bm{u} + \lambda \bm{u}\|_{[L^{p}(\mathbb{R}^{d}_{T})]^d}. 
\end{align*}
\end{lemma}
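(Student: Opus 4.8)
The plan is to prove this $L^2$ a priori estimate by the Fourier transform, exploiting that for $\bm{u}(t,\cdot)$ the operator $\mathbb{L}$ acts as multiplication by the matrix symbol $\mathbb{M}(\bm{\xi})$, and using the key ellipticity lower bound on the real part of $\mathbb{M}$. First I would reduce to the case $\bm{u} \in [C^\infty([0,T]\times\mathbb{R}^d)]^d$ with $\bm{u}(0,\bm{x}) = 0$ for large $|\bm{x}|$, which suffices by density in $\mathbb{H}_0^{(2s,2);1}(T)$ and the continuity of $\partial_t + \mathbb{L} + \lambda\mathbb{I}$ already established above the lemma. For such $\bm{u}$, set $\bm{g} = \partial_t\bm{u} + \mathbb{L}\bm{u} + \lambda\bm{u}$ and take the spatial Fourier transform to get $\partial_t\widehat{\bm{u}} + \mathbb{M}(\bm{\xi})\widehat{\bm{u}} + \lambda\widehat{\bm{u}} = \widehat{\bm{g}}$.

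Next I would test this ODE (in $t$, for fixed $\bm{\xi}$) against $\overline{\widehat{\bm{u}}}$ and take real parts. The term $\mathrm{Re}\,\langle \partial_t\widehat{\bm{u}}, \widehat{\bm{u}}\rangle = \tfrac12\partial_t|\widehat{\bm{u}}|^2$ integrates to a nonnegative boundary term at $t$ (the term at $t=0$ vanishes). The crucial point is the coercivity of the symbol: from \eqref{Ellipticity} and the structure of $\mathbb{M}(\bm{\xi})$ in \eqref{FMofL}, one has $\mathrm{Re}\,\langle \mathbb{M}(\bm{\xi})\bm{z}, \bm{z}\rangle \geq c\,|\bm{\xi}|^{2s}|\bm{z}|^2$ for all $\bm{z}\in\mathbb{C}^d$, with $c$ depending only on $d,s,\alpha_1$; this is the vector-valued analog of the Gårding-type inequality used in \cite{MENGESHA2020123919} and follows because $\frac{\bm{y}\otimes\bm{y}}{|\bm{y}|^2}$ is a nonnegative matrix and $\mathrm{Re}(e^{i2\pi\bm{\xi}\cdot\bm{y}} - 1 - 2\pi i\bm{\xi}\cdot\bm{y}\chi^{(s)}(\bm{y})) = \cos(2\pi\bm{\xi}\cdot\bm{y}) - 1 \leq 0$ with the standard lower bound. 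Integrating in $t$ and $\bm{\xi}$ and using Plancherel gives $\|(-\Delta)^{s/2}\bm{u}\|_{L^2}^2 + \lambda\|\bm{u}\|_{L^2}^2 \lesssim \|\bm{g}\|_{L^2}\|\bm{u}\|_{L^2}$, hence (after absorbing, using $\lambda \geq 0$ together with a standard interpolation or just the homogeneous piece together with Young's inequality) control of $\|(-\Delta)^{s/2}\bm{u}\|_{L^2}$. To upgrade from the $H^{s}$ energy estimate to the full $\|(-\Delta)^s\bm{u}\|_{L^2}$ bound, I would instead test against $(-\Delta)^s\bm{u}$ (equivalently multiply the Fourier side by $|\bm{\xi}|^{2s}\overline{\widehat{\bm{u}}}$): the same coercivity now yields $\int_0^T\int |\bm{\xi}|^{2s}\,\mathrm{Re}\langle\mathbb{M}\widehat{\bm{u}},\widehat{\bm{u}}\rangle \gtrsim \|(4\pi^2|\bm{\xi}|^2)^{s}\widehat{\bm{u}}\|_{L^2(\mathbb{R}_T^d)}^2 = \|(-\Delta)^s\bm{u}\|_{L^2}^2$, while the time term is again a nonnegative boundary contribution and the right side is $\leq \|\bm{g}\|_{L^2}\|(-\Delta)^s\bm{u}\|_{L^2}$; Cauchy–Schwarz and absorption finish the $(-\Delta)^s$ bound and, when $\lambda>0$, the analogous test against $\lambda\bm{u}$ gives the $\lambda\|\bm{u}\|_{L^2}$ term. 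Finally, $\|\partial_t\bm{u}\|_{L^2} \leq \|\bm{g}\|_{L^2} + \|\mathbb{L}\bm{u}\|_{L^2} + \lambda\|\bm{u}\|_{L^2}$ together with the $p=2$ comparison $\|\mathbb{L}\bm{u}\|_{L^2} \leq C\|(-\Delta)^s\bm{u}\|_{L^2}$ (the displayed computation just above the lemma) closes the estimate.

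I expect the main obstacle to be the coercivity estimate $\mathrm{Re}\,\langle\mathbb{M}(\bm{\xi})\bm{z},\bm{z}\rangle \gtrsim |\bm{\xi}|^{2s}|\bm{z}|^2$: one must check that the cancellation term $-2\pi i\,\bm{\xi}\cdot\bm{y}\,\chi^{(s)}(\bm{y})$ (present only for $s\geq 1/2$) contributes nothing to the real part — which it does not, since it is purely imaginary after pairing with the real matrix $\frac{\bm{y}\otimes\bm{y}}{|\bm{y}|^2}$ — and then obtain the sharp $|\bm{\xi}|^{2s}$ scaling from $\int_{\mathbb{R}^d}(1-\cos(2\pi\bm{\xi}\cdot\bm{y}))\,\langle\frac{\bm{y}\otimes\bm{y}}{|\bm{y}|^2}\bm{z},\bm{z}\rangle\,K(\bm{y})\,d\bm{y}$, lower-bounded using \eqref{Ellipticity} and a change of variables $\bm{y}\mapsto\bm{y}/|\bm{\xi}|$ together with the fact that the angular average of $\frac{\bm{y}\otimes\bm{y}}{|\bm{y}|^2}$ is a positive multiple of the identity. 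This is essentially \cite[Theorem 3.2]{MENGESHA2020123919}, so I would cite it rather than reprove it; the remaining steps are routine energy estimates. Alternatively, since part (A) with $\lambda=0$ is already available for $p=2$ from the displayed calculation and \eqref{basicone}, one could deduce the lemma more cheaply by the method of continuity itself, interpolating between $\mathbb{L}$ and $(-\Delta)^s$ at the level $p=2$ using the uniform-in-$\tau$ version of the symbol coercivity for $\mathbb{L}_\tau$; I would present whichever route is shortest, but the Fourier/energy route above is the most self-contained.
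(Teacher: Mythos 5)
Your proposal takes essentially the same route as the paper: reduce to smooth compactly supported data by density, test the equation against $(-\Delta)^s\bm{u}$ and against $\bm{u}$, pass to the Fourier side via Plancherel, and use the coercivity of the real/even part of the matrix symbol $\mathbb{M}(\bm{\xi})$ (which the paper proves directly via the compactness of $\mathbb{S}^{d-1}\times\mathbb{S}^{d-1}$ after the rescaling $\bm{h}=2\pi|\bm{\xi}|\bm{y}$, rather than citing it) to obtain $\|(-\Delta)^s\bm{u}\|_{L^2}\lesssim\|\bm{g}\|_{L^2}$ and $\lambda\|\bm{u}\|_{L^2}\lesssim\|\bm{g}\|_{L^2}$, then close with the triangle inequality and the $p=2$ comparison $\|\mathbb{L}\bm{u}\|_{L^2}\lesssim\|(-\Delta)^s\bm{u}\|_{L^2}$. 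The only cosmetic difference is your initial detour through the weaker $\dot H^s$ energy estimate before switching to the test against $(-\Delta)^s\bm{u}$, which is unnecessary but harmless.
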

\begin{proof}
Suppose that $\bm{u}\in \mathbb{H}_0^{(2s,2); 1}(T)$. Set $\bm{g}:=\partial_t \bm{u} + \mathbb{L} \bm{u} + \lambda \bm{u}.$  
Without loss of generality, using the continuity of the operator and the density of smooth functions in $\mathbb{H}^{(2s,2);1}_0 (0, T)$,  we assume that $\bm{u} \in [C^\infty_0(\mathbb{R}_{T}^d)]^d$ with $\bm{u}(0, \cdot)=0$. 
Taking the dot product with $(-\Delta)^s \bm{u}$ in both sides of \eqref{eqtimedep-apriori} and integrating over $\mathbb{R}_T^d$, we get 
\begin{equation}\label{multint}
\int_{\mathbb{R}_T^d} \partial_t \bm{u} \cdot (-\Delta)^s \bm{u} \, d \bm{x}  + \int_{\mathbb{R}_T^d}  \mathbb{L} \bm{u} \cdot  (-\Delta)^s \bm{u} \, d\bm{x}+ \lambda \int_{\mathbb{R}_T^d}  \bm{u} \cdot  (-\Delta)^s \bm{u} \, d \bm{x} = \int_{\mathbb{R}_T^d}  \bm{g} \cdot (-\Delta)^s \bm{u} \, d\bm{x}. 
\end{equation}
For the first term on the left-hand side of \eqref{multint}, by applying Plancherel's formula and by recalling that $\widehat{(-\Delta)^s \bm{u}}( t, \bm{\xi}) = |\bm{\xi}|^{2s} \widehat{\bm{u}} (t, \bm{\xi}) $, 

\begin{align*}
\int_{\mathbb{R}_T^d} &\partial_t \bm{u}(t, \bm{x}) \cdot  (-\Delta)^s \bm{u}(t, \bm{x}) \ d\bm{x} \ dt\\  &= \int_{\mathbb{R}_T^d}  \widehat{(-\Delta)^s \bm{u}}(t, \bm{\xi}) \cdot \overline{\widehat{\partial_t \bm{u}}(t, \bm{\xi})}  \ d\bm{\xi} \ dt
=  \int_{\mathbb{R}_T^d} |\bm{\xi}|^{2s} \widehat{ \bm{u}}(t, \bm{\xi}) \cdot \overline{\partial_t  \widehat{\bm{u}}(t, \bm{\xi})}  \ d\bm{\xi} \ dt\\
&=  \int_{\mathbb{R}_T^d} |\bm{\xi}|^{2s} \Re[ \widehat{ \bm{u}}](t,\bm{\xi}) \cdot \partial_t  \Re[ \widehat{ \bm{u}}](t,\bm{\xi})  \ d\bm{\xi} \ dt +   \int_{\mathbb{R}_T^d} |\bm{\xi}|^{2s} \Im[ \widehat{ \bm{u}}](t,\bm{\xi}) \cdot \partial_t  \Im[ \widehat{ \bm{u}}](t,\bm{\xi})  \ d\bm{\xi} \ dt\\
&+ \imath \int_{\mathbb{R}_T^d} |\bm{\xi}|^{2s} \left(\Im[ \widehat{ \bm{u}}](t,\bm{\xi}) \cdot \partial_t  \Re[ \widehat{ \bm{u}}](t,\bm{\xi})  + \Re[ \widehat{ \bm{u}}](t,\bm{\xi})\cdot  \partial_t  \Im[ \widehat{ \bm{u}}](t,\bm{\xi})\right) \ d\bm{\xi} \ dt. 
\end{align*}
Using the fact that $\int_{\mathbb{R}_T^d}   \partial_t \bm{u}( t, \bm{x})  \cdot (-\Delta)^s \bm{u}( t, \bm{x}) \ d\bm{x} \ dt$ is real, we conclude that 
\[
\begin{split}
\int_{\mathbb{R}_T^d} &\partial_t \bm{u}(t, \bm{x}) \cdot  (-\Delta)^s \bm{u}(t, \bm{x}) \ d\bm{x} \ dt\\
&=  \int_{\mathbb{R}_T^d} |\bm{\xi}|^{2s} \Re[ \widehat{ \bm{u}}](t,\bm{\xi}) \cdot \partial_t  \Re[ \widehat{ \bm{u}}](t,\bm{\xi})  \ d\bm{\xi} \ dt+  \int_{\mathbb{R}_T^d} |\bm{\xi}|^{2s} \Im[ \widehat{ \bm{u}}](t,\bm{\xi}) \cdot \partial_t  \Im[ \widehat{ \bm{u}}](t,\bm{\xi})  \ d\bm{\xi} \ dt.
\end{split}
\]
Using chain rule for $\partial_t$, we have  
\[
 \Re[ \widehat{ \bm{u}}](t,\bm{\xi}) \cdot \partial_t  \Re[ \widehat{ \bm{u}}](t,\bm{\xi}) +  \Im[ \widehat{ \bm{u}}](t,\bm{\xi}) \cdot \partial_t  \Im[ \widehat{ \bm{u}}](t,\bm{\xi}) = \frac{1}{2} \partial_t |  \Re[ \widehat{ \bm{u}}]|^2(t,\bm{\xi}) + \frac{1}{2} \partial_t |  \Im[ \widehat{ \bm{u}}]|^2(t,\bm{\xi}) = \frac{1}{2} \partial_t | \widehat{ \bm{u}}|^2(t,\bm{\xi}).
\]
Thus, using fundamental theorem of calculus with $\bm{u}(0, \cdot)=0$ in the last line, 
\begin{align*}
\int_{\mathbb{R}_T^d} \partial_t \bm{u}(t, \bm{x}) \cdot  (-\Delta)^s \bm{u}(t, \bm{x}) \ d\bm{x} \ dt  &=  \int_{\mathbb{R}_T^d} |\bm{\xi}|^{2s}  \frac{1}{2} \partial_t | \widehat{ \bm{u}}|^2(t,\bm{\xi})
 \ d\bm{\xi} \ dt\\
 &=  \frac{1}{2} \int_{\mathbb{R}^d} |\bm{\xi}|^{2s}  \int_0^T \partial_t | \widehat{ \bm{u}}|^2(t,\bm{\xi})
 \ dt \ d\bm{\xi} \\
 &=  \frac{1}{2} \int_{\mathbb{R}^d} |\bm{\xi}|^{2s}    | \widehat{ \bm{u}}|^2(T,\bm{\xi}) \ d\bm{\xi} \geq 0.
\end{align*}
For the second term on the left-hand side of \eqref{multint}, by recalling that $\widehat{\mathbb{L} \bm{u}}(t,\bm{\xi}) = \mathbb{M}(\bm{\xi}) \widehat{\bm{u}} (t,\bm{\xi}) $  where $\mathbb{M}(\bm{\xi}) = \int_{\mathbb{R}^d} 
\frac{{\bm y}\otimes {\bm y}}{|{\bm y}|^2}\left(e^{\imath 2\pi{\bm \xi}\cdot {\bm y}} - 1 -2\pi \imath{\bm  \xi}\cdot {\bm y} \chi^{(s)}(\bm{y}) \right)K(\bm{y}) d\bm{y}$,
\begin{align*}
\int_{\mathbb{R}_T^d} \mathbb{L} \bm{u}(t, \bm{x}) \cdot  (-\Delta)^s \bm{u}(t, \bm{x}) \ d\bm{x} \ dt  &= \int_{\mathbb{R}_T^d}   \widehat{(-\Delta)^s \bm{u}}(t,\bm{\xi}) \cdot \overline{\widehat{\mathbb{L} \bm{u}}(t,\bm{\xi})}  \ d\bm{\xi} \ dt\\
&=  \int_{\mathbb{R}_T^d} |\bm{\xi}|^{2s} \widehat{ \bm{u}}(t,\bm{\xi})^{T} \overline{\mathbb{M}(\bm{\xi})  \widehat{\bm{u}}(t,\bm{\xi})}  \ d\bm{\xi} \ dt\\
&=  \int_{\mathbb{R}_T^d} |\bm{\xi}|^{2s} \widehat{ \bm{u}}(t,\bm{\xi})^{T} \mathbb{M}^e(\bm{\xi})  \overline{\widehat{\bm{u}}(t,\bm{\xi})}  \ d\bm{\xi} \ dt, 
\end{align*}
where
\begin{align*}
\mathbbm{M}^e(\bm{\xi}) &:= \int_{\mathbb{R}^d} \left(\frac{\bm{y} \otimes \bm{y}}{|\bm{y}|^2 }\right) \left( e^{2\pi i \bm{y} \cdot \bm{\xi} } - 1 - 2\pi i \bm{\xi} \cdot \bm{y} \chi^{(s)}(\bm{y}) \right) \ K^e(\bm{y}) \ d\bm{y}\\
&=  \int_{\mathbb{R}^d} \left(\frac{\bm{y} \otimes \bm{y}}{|\bm{y}|^2 }\right) ( 1 - \cos(2\pi \bm{y} \cdot \bm{\xi} ))  \ K^e(\bm{y}) \ d\bm{y} 
\end{align*}
is the even part of $\mathbb{M}$. Now, for every $\bm{\xi}\in \mathbb{R}^d$, the matrix $\mathbbm{M}^e(\bm{\xi})$ is symmetric with real entries, hence the least of its eigenvalues is given by $\min_{\bm{\mu} \in \mathbb{S}^{d-1}} \bm{\mu}^T \mathbbm{M}^e(\bm{\xi}) \bm{\mu}$. We estimate it from below as a function of $\bm{\xi}$ as,
\begin{align*}
\min_{\bm{\mu} \in \mathbb{S}^{d-1}} \bm{\mu}^T \mathbbm{M}^e(\bm{\xi}) \bm{\mu} &= \min_{\bm{\mu} \in \mathbb{S}^{d-1}} \int_{\mathbb{R}^d}   ( 1 - \cos(2\pi \bm{y} \cdot \bm{\xi} ))  \left| \bm{\mu} \cdot \frac{\bm{y}}{|\bm{y}|} \right|^2 \ K^e(\bm{y}) \ d\bm{y}\\
&\geq  \min_{\bm{\mu} \in \mathbb{S}^{d-1}}  \alpha_1 \int_{\mathbb{R}^d}   \frac{( 1 - \cos(2\pi \bm{y} \cdot \bm{\xi} )) } {|\bm{y}|^{d+2s}} \left| \bm{\mu} \cdot \frac{\bm{y}}{|\bm{y}|} \right|^2  \ d\bm{y}\\
&= \min_{\bm{\mu} \in \mathbb{S}^{d-1}}  \alpha_1  (2\pi |\bm{\xi}|)^{2s} \int_{\mathbb{R}^d}   \frac{( 1 - \cos(2\pi  \bm{h} \cdot \frac{\bm{\xi}}{|\bm{\xi}|} )}{|\bm{h}|^{d+2s}} \left| \bm{\mu} \cdot \frac{\bm{h}}{|\bm{h}|} \right|^2  \ d\bm{h}\\
& \geq C(2\pi |\bm{\xi}|)^{2s}, 
\end{align*} 
where we have made the substitution $\bm{h} = 2\pi |\bm{\xi}| \bm{y}$ and used the fact that 
 \[C  = \alpha_1  \min_{\bm{\mu}\in \mathbb{S}^{d-1}, \bm{\xi }\in \mathbb{R}^{d}\setminus\{0\} } \int_{\mathbb{R}^d}   \frac{( 1 - \cos(2\pi  \bm{h} \cdot \frac{\bm{\xi}}{|\bm{\xi}|} )}{|\bm{h}|^{d+2s}} \left| \bm{\mu} \cdot \frac{\bm{h}}{|\bm{h}|} \right|^2  \ d\bm{h} > 0.  \]
The latter follows from the observation that the nonnegative function $\bm{\Psi}(\bm{\nu}, \bm{\mu}) : \mathbb{S}^{d-1} \times \mathbb{S}^{d-1} \to \mathbb{R}$ given by 
\[
 \bm{\Psi}(\bm{\nu}, \bm{\mu}) := \int_{\mathbb{R}^d}   \frac{1 - \cos(2\pi  \bm{h} \cdot \bm{\nu} )}{|\bm{h}|^{d+2s}} \left| \bm{\mu} \cdot \frac{\bm{h}}{|\bm{h}|} \right|^2  \ d\bm{h} 
\]
is strictly positive for any $\bm{\nu}, \bm{\mu} \in \mathbb{S}^{d-1} $ since $\frac{1 - \cos(2\pi  \bm{h} \cdot \bm{\nu} )}{|\bm{h}|^{d+2s}} \left| \bm{\mu} \cdot \frac{\bm{h}}{|\bm{h}|} \right|^2 >0  $ almost everywhere and that $\bm{\Psi}$ is clearly continuous on the compact set $\mathbb{S}^{d-1} \times \mathbb{S}^{d-1}$, so its minimum  $\min_{\bm{\nu}, \bm{\mu} \in \mathbb{S}^{d-1}} \bm{\Psi}(\bm{\nu}, \bm{\mu}) = C>0.$
We conclude that 
\begin{align}
\int_{\mathbb{R}_T^d} \mathbb{L} \bm{u}(t, \bm{x}) \cdot (-\Delta)^s \bm{u}(t, \bm{x}) \ d\bm{x} \ dt  &=  \int_{\mathbb{R}_T^d} |\bm{\xi}|^{2s} \widehat{ \bm{u}}(t,\bm{\xi})   \mathbb{M}^e(\bm{\xi})  \overline{\widehat{\bm{u}}(t,\bm{\xi})}  \ d\bm{\xi} \ dt \nonumber\\
&\geq  \int_{\mathbb{R}_T^d} |\bm{\xi}|^{2s}   |\widehat{\bm{u}}(t,\bm{\xi} )|^{2}  \min_{\bm{v} \in \mathbb{S}^{d-1}} \bm{v}^T {\mathbbm{M}^e}(\bm{\xi}) \bm{v} \ d\bm{\xi} \ dt \nonumber\\
 &\geq C   \int_{\mathbb{R}_T^d} |\bm{\xi}|^{4s} |\widehat{\bm{u}}(t,\bm{\xi})|^2   \ d\bm{\xi}  \ dt \nonumber\\
 &= C \| (-\Delta)^s \bm{u}\|^2_{[L^2(\mathbb{R}_T^d)]^d}.  \label{bdlapl}
\end{align}
Similarly, taking the dot product with $\bm{u}$ in both sides of \eqref{eqtimedep-apriori} and integrating over $\mathbb{R}_T^d$, we get 
\begin{equation} \label{multint2}
 \int_{\mathbb{R}_T^d} \partial_t \bm{u} \cdot \bm{u} \ d\bm{x} \ dt   +  \int_{\mathbb{R}_T^d}  \mathbb{L} \bm{u} \cdot \bm{u}\ d\bm{x} \ dt  + \lambda \int_{\mathbb{R}_T^d} | \bm{u}|^2 \ d\bm{x} \ dt  =  \int_{\mathbb{R}_T^d}  \bm{g}  \cdot \bm{u}\ d\bm{x} \ dt 
\end{equation}
and therefore 
{\begin{align}\label{drop1}
\int_{\mathbb{R}_T^d} \partial_t \bm{u}(t, \bm{x})   \cdot \bm{u}(t, \bm{x}) \ d\bm{x} \ dt  &=   \frac{1}{2} \int_{\mathbb{R}^d}     | \widehat{ \bm{u}}|^2(T, \bm{\xi}) \ d\bm{\xi} \geq 0, 
\end{align}}
\begin{align} \label{drop2}
\int_{\mathbb{R}_T^d} \mathbb{L} \bm{u}(t, \bm{x}) \cdot  \bm{u}(t, \bm{x}) \ d\bm{x} \ dt  
 &\geq C   \int_{\mathbb{R}_T^d} |\bm{\xi}|^{2s} |\widehat{\bm{u}}(t,\bm{\xi})|^2   \ d\bm{\xi}  \ dt \geq 0. 
\end{align}
It then also follows by H\"older's inequality that,
\[
\lambda \int_{\mathbb{R}_T^d} | \bm{u}|^2 \ d\bm{x} \ dt  \leq  \int_{\mathbb{R}_T^d}  \bm{g}  \cdot \bm{u}\ d\bm{x} \ dt \leq \|\bm{g}\|_{[L^{2}(\mathbb{R}^d_T)]^d}\|\bm{u}\|_{[L^{2}(\mathbb{R}^d_T)]^d}
\]
and hence 
\begin{equation}\label{bdd}
\lambda^2 \int_{\mathbb{R}_T^d} | \bm{u}(t, \bm{x})|^2 \ d\bm{x} \ dt  \leq \int_{\mathbb{R}_T^d}  |\bm{g}(t, \bm{x})|^2   \ d\bm{x} \ dt.  
\end{equation}
Therefore, using \eqref{eqtimedep-apriori}, triangle inequality, \eqref{bdlapl}, \eqref{bdd} and the bound $\|\mathbb{L} \bm{u}\|_{[L^2(\mathbb{R}_{T}^d)]^d} \leq N \|(-\Delta)^s \bm{u} \|_{[L^2(\mathbb{R}_{T}^d)]^d}$, we conclude that
\begin{align*}
\|\partial_t  \bm{u}\|_{[L^2(\mathbb{R}_{T}^d)]^d} &+  \|(-\Delta)^s \bm{u} \|_{[L^2(\mathbb{R}_{T}^d)]^d} + \lambda \| \bm{u}\|_{[L^2(\mathbb{R}_{T}^d)]^d} \\ &\leq \|\bm{g} - \mathbb{L} \bm{u} - \lambda \bm{u} \|_{[L^2(\mathbb{R}_{T}^d)]^d} +  \|(-\Delta)^s \bm{u} \|_{[L^2(\mathbb{R}_{T}^d)]^d} + \lambda \| \bm{u}\|_{[L^2(\mathbb{R}_{T}^d)]^d} \\
&\leq \|\bm{g}\|_{[L^2(\mathbb{R}_{T}^d)]^d} + \|\mathbb{L} \bm{u}\|_{[L^2(\mathbb{R}_{T}^d)]^d} + \lambda \|\bm{u} \|_{[L^2(\mathbb{R}_{T}^d)]^d} +  \|(-\Delta)^s \bm{u} \|_{[L^2(\mathbb{R}_{T}^d)]^d} + \lambda \| \bm{u}\|_{[L^2(\mathbb{R}_{T}^d)]^d} \\
 &\leq N \|\bm{g}\|_{[L^2(\mathbb{R}_{T}^d)]^d}.
\end{align*}
\end{proof}
Finally, we show the existence of a solution to the equation corresponding to the operator $(-\mathring{\Delta})^{s}$. 
\begin{lemma}
   Suppose that $\bm{g}\in [L^{2}(\mathbb{R}_T^{d})]^d$ and $\lambda\geq 0$.  Then there exists  $\bm{u}\in \mathbb{H}^{(2s, 2); 1}_{0}(T)$ that solves  the strongly-coupled system of parabolic fractional equation
\[
\partial_t \bm{u} + \alpha_1(-\mathring{\Delta})^{s}\bm{u} + \lambda \bm{u} =\bm{g}  
\]
where the operator $(-\mathring{\Delta})^{s} $ is as defined in  \eqref{fractional-Lame} and $\alpha_1$ is the ellipticity constant in \eqref{Ellipticity}.
\end{lemma}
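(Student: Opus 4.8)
\emph{Proof plan.}
The plan is to diagonalize the operator $\alpha_{1}(-\mathring{\Delta})^{s}$ by means of the Helmholtz (Leray) decomposition and thereby reduce the problem to the scalar result \cite[Theorem 2.6]{Dong:2023aa}, exactly as part (A) was reduced to the scalar fractional Laplacian. Recall from \eqref{Fourier-vector-Lap} that the Fourier matrix symbol of $\alpha_{1}(-\mathring{\Delta})^{s}$ is
\[
\mathbb{M}_{0}(\bm{\xi}) = a\,(2\pi|\bm{\xi}|)^{2s}\,\mathbb{I} \;+\; b\,(2\pi|\bm{\xi}|)^{2s}\,\frac{\bm{\xi}\otimes\bm{\xi}}{|\bm{\xi}|^{2}}, \qquad a:=\alpha_{1}\ell_{1}>0,\quad b:=\alpha_{1}\ell_{2}>0,
\]
so that $\mathbb{M}_{0}(\bm{\xi})$ acts as multiplication by $a\,(2\pi|\bm{\xi}|)^{2s}$ on $\bm{\xi}^{\perp}$ and by $(a+b)\,(2\pi|\bm{\xi}|)^{2s}$ on $\mathbb{R}\bm{\xi}$. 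The elementary identities $\mathbb{M}_{0}(\bm{\xi})\big(\mathbb{I}-\tfrac{\bm{\xi}\otimes\bm{\xi}}{|\bm{\xi}|^{2}}\big)=a\,(2\pi|\bm{\xi}|)^{2s}\big(\mathbb{I}-\tfrac{\bm{\xi}\otimes\bm{\xi}}{|\bm{\xi}|^{2}}\big)$ and $\mathbb{M}_{0}(\bm{\xi})\tfrac{\bm{\xi}\otimes\bm{\xi}}{|\bm{\xi}|^{2}}=(a+b)\,(2\pi|\bm{\xi}|)^{2s}\tfrac{\bm{\xi}\otimes\bm{\xi}}{|\bm{\xi}|^{2}}$ will be the only structural facts needed. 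Let $P$ be the Leray projection, i.e.\ the spatial Fourier multiplier with symbol $\mathbb{I}-\tfrac{\bm{\xi}\otimes\bm{\xi}}{|\bm{\xi}|^{2}}$, and $Q:=\mathbb{I}-P$. Both are bounded on $[L^{2}(\mathbb{R}^{d})]^{d}$, commute with $\partial_{t}$ and with $(-\Delta)^{s}$, and therefore act boundedly on $\mathbb{H}^{(2s,2);1}(T)$.

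The steps would then be as follows. First, split $\bm{g}=P\bm{g}+Q\bm{g}\in[L^{2}(\mathbb{R}^{d}_{T})]^{d}$. The scalar operators $a\,(-\Delta)^{s}$ and $(a+b)\,(-\Delta)^{s}$ are of the form \eqref{scaler-main-op} with kernels comparable to $|\bm{y}|^{-(d+2s)}$, so \cite[Theorem 2.6]{Dong:2023aa}, applied component by component and exactly as in the remark following Theorem \ref{timedep}, produces $\bm{v}_{1},\bm{v}_{2}\in[\mathbb{H}_{0}^{(2s,2);1}(T)]^{d}$ solving
\[
\partial_{t}\bm{v}_{1}+a\,(-\Delta)^{s}\bm{v}_{1}+\lambda\bm{v}_{1}=P\bm{g},\qquad \partial_{t}\bm{v}_{2}+(a+b)\,(-\Delta)^{s}\bm{v}_{2}+\lambda\bm{v}_{2}=Q\bm{g}\quad\text{in }\mathbb{R}^{d}_{T}.
\]
Second, set $\bm{u}:=P\bm{v}_{1}+Q\bm{v}_{2}$. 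Applying $P$ to the first equation and $Q$ to the second, using $P^{2}=P$, $Q^{2}=Q$ and that $P,Q$ commute with $\partial_{t}$ and $(-\Delta)^{s}$, one obtains $\partial_{t}(P\bm{v}_{1})+a\,(-\Delta)^{s}(P\bm{v}_{1})+\lambda(P\bm{v}_{1})=P\bm{g}$ and $\partial_{t}(Q\bm{v}_{2})+(a+b)\,(-\Delta)^{s}(Q\bm{v}_{2})+\lambda(Q\bm{v}_{2})=Q\bm{g}$. Since $\widehat{P\bm{v}_{1}}(\bm{\xi})\in\bm{\xi}^{\perp}$ and $\widehat{Q\bm{v}_{2}}(\bm{\xi})\in\mathbb{R}\bm{\xi}$ for a.e.\ $\bm{\xi}$, the symbol identities above give $\alpha_{1}(-\mathring{\Delta})^{s}(P\bm{v}_{1})=a\,(-\Delta)^{s}(P\bm{v}_{1})$ and $\alpha_{1}(-\mathring{\Delta})^{s}(Q\bm{v}_{2})=(a+b)\,(-\Delta)^{s}(Q\bm{v}_{2})$. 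Adding the two identities yields $\partial_{t}\bm{u}+\alpha_{1}(-\mathring{\Delta})^{s}\bm{u}+\lambda\bm{u}=P\bm{g}+Q\bm{g}=\bm{g}$ in $\mathbb{R}^{d}_{T}$, which is the desired equation. If one also wants the accompanying bound, it follows by adding the estimates \eqref{basicone} for $\bm{v}_{1}$ and $\bm{v}_{2}$ together with the $L^{2}$-boundedness of $P$ and $Q$, or alternatively from the a priori estimate just established in the preceding lemma.

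The only point requiring care — and the one I regard as the main (albeit soft) obstacle — is verifying that $\bm{u}=P\bm{v}_{1}+Q\bm{v}_{2}$ genuinely lies in $\mathbb{H}_{0}^{(2s,2);1}(T)$ rather than merely in $\mathbb{H}^{(2s,2);1}(T)$; that is, that the projections $P$ and $Q$ preserve the zero--initial--data subspace. This is where a little bookkeeping is needed, since $P$ and $Q$ do not preserve spatial compact support and so do not obviously map the approximating class in the definition of $\mathbb{H}_{0}^{(2s,2);1}(T)$ into itself. I would settle it using the standard characterization of $\mathbb{H}_{0}^{(2s,2);1}(T)$ as the subspace of $\mathbb{H}^{(2s,2);1}(T)$ consisting of functions with vanishing trace at $t=0$: since $P$ and $Q$ act only in the spatial variable, they commute with the initial trace, and $\bm{v}_{1},\bm{v}_{2}$ have zero initial trace, hence so do $P\bm{v}_{1}$ and $Q\bm{v}_{2}$. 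Equivalently, $P\bm{v}_{1}$ and $Q\bm{v}_{2}$ are themselves $L^{2}$-solutions of scalar fractional parabolic equations with zero initial data, and the conclusion follows from the uniqueness assertion in \cite[Theorem 2.6]{Dong:2023aa}. Everything else in the argument is routine Fourier-side algebra.
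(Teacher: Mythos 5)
Your argument is correct in essence but takes a genuinely different route from the paper's. The paper constructs the solution directly via the explicit matrix heat kernel $\mathbb{W}(t,\bm{x})$ of $\alpha_1(-\mathring{\Delta})^s$, whose Fourier transform is written out from \cite[Appendix A]{scott2022paper}; it first reduces to $\lambda=0$ by the substitution $\bm{v}=e^{\lambda t}\bm{u}$, then verifies the Duhamel representation $\hat{\bm{v}}(t,\bm{\xi})=\int_0^t\widehat{\mathbb{W}}(t-r,\bm{\xi})\hat{\bm{f}}(r,\bm{\xi})\,dr$, proves an $\mathbb{H}^{(2s,2);1}$ bound for this representation using elementary time-integrability estimates on $\widehat{\mathbb{W}}$ and $\partial_t\widehat{\mathbb{W}}$, and concludes by density. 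You instead exploit the block-diagonal structure of the symbol \eqref{Fourier-vector-Lap} under the Helmholtz splitting $P\oplus Q$ and reduce to two scalar instances of \cite[Theorem 2.6]{Dong:2023aa} with the Fourier multipliers $a(2\pi|\bm{\xi}|)^{2s}$ and $(a+b)(2\pi|\bm{\xi}|)^{2s}$. The symbol identities you write are correct, and the reduction is clean: your approach avoids any explicit heat-kernel computation and leans entirely on the already-available scalar result. What you pay for this is precisely the issue you flag — verifying that $\bm{u}=P\bm{v}_1+Q\bm{v}_2$ lands in $\mathbb{H}_0^{(2s,2);1}(T)$ and not merely in $\mathbb{H}^{(2s,2);1}(T)$, since $P,Q$ destroy compact spatial support of the approximants in the definition of $\mathbb{H}_0$. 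The paper sidesteps this entirely because the Duhamel formula with $\bm{f}\in C_c$ immediately gives $\bm{v}(t,\cdot)=0$ for small $t$, so the construction lands in $\mathbb{H}_0$ from the start.

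Of your two sketches for the membership issue, the first (trace characterization: $P,Q$ act spatially, commute with the $t=0$ trace, and $\mathbb{H}_0^{(2s,2);1}(T)$ is the zero-trace subspace) is the right idea and can be made rigorous by a standard cutoff-and-mollify argument on $P\bm{v}_{1,n}+Q\bm{v}_{2,n}$ where $\bm{v}_{j,n}$ are the admissible approximants of $\bm{v}_j$. The second sketch, however, is circular as stated: the uniqueness assertion in \cite[Theorem 2.6]{Dong:2023aa} gives a unique solution \emph{in} $\mathbb{H}_0^{(2s,2);1}(T)$, so invoking it to conclude that $P\bm{v}_1$ coincides with that solution already presupposes that $P\bm{v}_1\in\mathbb{H}_0^{(2s,2);1}(T)$ (or requires a separate uniqueness statement in the larger class $\mathbb{H}^{(2s,2);1}(T)$ with zero trace, which you would have to supply). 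You should drop the second sketch or reformulate it. Finally, note that the paper's remark quotes \cite[Theorem 2.6]{Dong:2023aa} with $\lambda>0$; your scheme applies verbatim if the scalar theorem covers $\lambda=0$, and otherwise you should insert the same $e^{\lambda t}$ reduction the paper uses at the outset, which is harmless since $T<\infty$.
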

\begin{proof}
    By making the change of variables $\bm{v}(t,\bm{x}) = e^{\lambda t}\bm{u}(t,\bm{x}) $, it suffices to show existence of solution to 
\begin{equation}\label{base-eqn-2}
    \partial_t \bm{v} + \alpha_1(-\mathring{\Delta})^{s}\bm{v} =\bm{f}, 
    \end{equation}
    where $\bm{f} (t, \bm{x}) = e^{\lambda t} \bm{g}(t, \bm{x})$.  
    To that end, we first claim that if $\bm{v}\in [C_c(\mathbb{R}^{d}_{T})]^d$ and setting $\bm{f}:=\partial_t \bm{v} + \alpha_1(-\mathring{\Delta})^{s}\bm{v},$ then $\bm{v}$ can be represented as  
\begin{equation}\label{repr-of-v}
    \bm{v}(t, \bm{x}) = \int_{0}^{t}\int_{\mathbb{R}^{d}} \mathbb{W}(t-r, \bm{x} - \bm{y}) \bm{f}(r, \bm{y}) \, d\bm{y} \,dr
    \end{equation}
    where $\mathbb{W}(t, \bm{x})$ is the matrix heat kernel associated to the operator $\alpha_1(-\mathring{\Delta})^s$. This kernel is computed explicitly in \cite[Appendix A]{scott2022paper} with its Fourier transform for each $t$ given by 
    \[
    \widehat{\mathbb{W}}(t,\bm{\xi}) = e^{-t\alpha_1\mathbb{M}^{\Delta}(\bm{\xi})}=[e^{-\alpha_1 \ell_1 (2\pi |\bm{\xi}|)^{2s} t}] \mathbb{I}+ [e^{-(\alpha_1 \ell_1 + \ell_2) (2\pi |\bm{\xi}|)^{2s}t} - e^{-\alpha_1 \ell_1 (2\pi |\bm{\xi}|)^{2s} t}] \frac{\bm{\xi} \otimes \bm{\xi}}{|\bm{\xi}|^2}. 
    \]
  It is not difficult to see that $\mathbb{W}$ is integrable in $\mathbb{R}^{d}_{T}$  \cite{scott2022paper}. Thus,  to prove the representation \eqref{repr-of-v}, it suffices to prove that 
  \[
 \hat{\bm{v}}(t, \bm{\xi}) =\int_{0}^{t} \widehat{\mathbb{W}}( t-r, \bm{\xi}) \hat{\bm{f}}(r, \bm{\xi}) \, dr. 
  \]
  But this follows from the definition of $\bm{f} = \partial_t \bm{v} + \alpha_1(-\mathring{\Delta})^{s}\bm{v}$, taking the Fourier transform and solving the resulting system of differential equations
  \[
\partial_{t}\hat{\bm{v}} + \alpha_1\mathbb{M}^{\Delta}(\bm{\xi}) \hat{\bm{v}} = \hat{\bm{f}}, 
  \]
  where $\mathbb{M}^{\Delta}(\bm{\xi})$ is the Fourier matrix symbol given in \eqref{matrix-symbol-forfrac}.  
  We next claim the converse: if $\bm{f}\in [C_c(\mathbb{R}^{d}_T)]^d$, then $\bm{v}$ given by the representation \eqref{repr-of-v} solves the equation \eqref{base-eqn-2}, which follows from taking the Fourier transform of the $\bm{v}$ in $\bm{x}$, to get 
  \[
 \hat{\bm{v}}(t, \bm{\xi}) =\int_{0}^{t} \widehat{\mathbb{W}}( t-r, \bm{\xi}) \hat{\bm{f}}(r, \bm{\xi}) \, dr. \]
 Differentiating in time using the fundamental theorem of calculus we have 
 \[
  \partial_t \hat{\bm{v}}(t,\bm{\xi})  = -\mathbb{M}^{\Delta}(\bm{\xi}) \int_{0}^{t} \widehat{\mathbb{W}}(t-r, \bm{\xi}) \hat{\bm{f}}(r, \bm{\xi}) \, dr  + \widehat{\mathbb{W}}(0, \bm{\xi}) \hat{\bm{f}}(t, \bm{\xi}) =  -\mathbb{M}^{\Delta}(\bm{\xi}) \hat{\bm{v}}(t, \bm{\xi}) + \hat{\bm{f}}(t, \bm{\xi})
 \]
 which proves the equality $\partial_{t} \bm{v} = -\mathbb{L}\bm{v} + \bm{f}.$
  Since $\bm{f}(t, \cdot) = 0$ for small $t>0$, it follows $\bm{v}(t, \cdot) = 0$ for small $t>0$.  Moreover, $\bm{v}\in \mathbb{H}^{(2s, s); 1}(T)$ with the estimate
  \begin{equation}\label{v-estimate}
  \|\bm{v}\|_{\mathbb{H}^{(2s,2);1}(T)}\leq N\|\bm{f}\|_{[L^{2}(\mathbb{R}^{d}_{T})]^d}
  \end{equation}
  for some uniform constant $N>0$.  To establish \eqref{v-estimate}, it suffices to bound $\|\partial_t \hat{\bm{v}}\|_{[L^{2}(\mathbb{R}^{d}_{T})]^d}$ and $\|(1 + |\cdot|^{2})^s\hat{\bm{v}}\|_{[L^{2}(\mathbb{R}^{d}_{T})]^d}$. To that end, we first note the following estimates for the heat kernel which follows after a simple calculation:
  \[
\int_{0}^{T} |\mathbb{W}(t,\bm{\xi})| \, dt \leq \left\{ \begin{split} & \ 3 & \quad \text{if $|\bm{\xi}| \leq 1$}\\
 &\frac{3}{\alpha_1 \ell_1} (2\pi |\bm{\xi}|)^{-2s} &\quad \text{if $|\bm{\xi}|>1$}
\end{split}
  \right.\quad \text{and}\quad  \int_{0}^{T} |\partial_{t}\mathbb{W}(t, \bm{\xi})| \, dt \leq 3. 
  \]
  Then if $\bm{v}$ is given by \eqref{repr-of-v} we have that 
\begin{align*}
\| \bm{v} \|_{L^2([0,T]; [H^{2s,2}(\mathbb{R}^d)]^d)} &=\left( \int_0^T  \int_{\mathbb{R}^d} (1+ |\bm{\xi}|^2)^{2s} \left| \int_0^t \widehat{\mathbb{W}}(t- r,\bm{\xi}) \widehat{\bm{f}}(r, \bm{\xi}) \, dr \right|^2 \, d\bm{\xi} \, dt  \right)^{\frac{1}{2}}\\
&\leq \Biggl(  \int_{|\bm{\xi}|>1} (1+ |\bm{\xi}|^2)^{2s} 
\left(3 \frac{1 }{\alpha_1 \ell_1 (2\pi |\bm{\xi}|)^{2s} }\right)^2 \|\widehat{\bm{f}}(\cdot, \bm{\xi})\|^2_{L^2(0,T)} \, d\bm{\xi}  \\
&\quad +  \int_{|\bm{\xi}|\leq 1} (1+ |\bm{\xi}|^2)^{2s} 
(3T)^2 \|\widehat{\bm{f}}(\cdot, \bm{\xi})\|^2_{L^2(0,T)} \ d\bm{\xi}  \Biggr)^{\frac{1}{2}}\\
&\leq \Biggl( C_1  \int_{|\bm{\xi}|>1}   \|\widehat{\bm{f}}(\cdot, \bm{\xi})\|^2_{L^2(0,T)} \ d\bm{\xi}   +C_2  \int_{|\bm{\xi}|\leq 1}  \|\widehat{\bm{f}}(\cdot, \bm{\xi})\|^2_{L^2(0,T)} \ d\bm{\xi}   \Biggr)^{\frac{1}{2}}\\
&\leq C  \|\widehat{\bm{f}}\|_{[L^2(\mathbb{R}^d_T)]^d} \\
&\leq C  \|\bm{f}\|_{[L^2(\mathbb{R}^d_T)]^d}, 
\end{align*}
and 
\begin{align*}
\|\partial_t \widehat{\bm{v}}\|_{[L^2(\mathbb{R}^d_T)]^d}  
&\leq \|\bm{f}\|_{[L^2(\mathbb{R}^d_T)]^d}  + \left( \int_{\mathbb{R}^d }\int_0^T \left| \int_0^t  \partial_t \widehat{\mathbb{W}}(t-r, \bm{\xi})  \widehat{\bm{f}}(r, \bm{\xi}) \, dr \right|^2 \, dt  \, d\bm{\xi}  \right)^{\frac{1}{2}}\\
&\leq \|\bm{f}\|_{[L^2(\mathbb{R}^d_T)]^d}  + \left( \int_{\mathbb{R}^d}9 \|\widehat{\bm{f}}(\cdot, \bm{\xi})\|^2_{L^2(0,T)}
 \, d\bm{\xi}  \right)^{\frac{1}{2}}\\
 &=4 \|\bm{f}\|_{[L^2(\mathbb{R}^d_T)]^d}.
\end{align*} 
 Finally, given $\bm{f}\in [L^{2}(\mathbb{R}^{d}_{T})]^d$, let $\bm{f}_n \in [C_c(\mathbb{R}^d_T)]^d$ such that $\|\bm{f}_n - \bm{f}\|_{[L^{2}(\mathbb{R}^{d}_T)]^d} \to 0$ as $n\to \infty.$ For $n,$ let $\bm{v}_n \in \mathbb{H}^{(2s,2);1}_0(T)$ be the corresponding solution given by \eqref{repr-of-v}. Then using the estimate \eqref{v-estimate}, we see that $\{\bm{v}_n\}$ is a Cauchy sequence in $\mathbb{H}_0^{(2s,2);1}(T)$, whose unique limit satisfies \eqref{base-eqn-2}.
\end{proof}
 
  \subsection{The case for general $p$}
  A key result we need, proved in  \cite{Dong:2023aa}, is the following improved local estimate. We note that for $R>0,$ if  $\bm{u}|_{B_{R}}\in\mathbb{H}_{0}^{(2s, p);1}((0,T)\times B_{R})$, then for any $\zeta \in C^\infty_c (B_{R})$, $\zeta\bm{u}|_{B_{R}} \in \mathbb{H}_{0}^{(2s, p);1}(\mathbb{R}_{T}^{d})$. 
\begin{lemma}{\cite[Corollary 3.4]{Dong:2023aa}}
           \label{cor}
           Fix $p \in (1,\infty)$. Let $s \in (0,1),  \lambda \geq 0,  T\in (0, \infty), 0<r<R<\infty.$   Suppose that Theorem $\ref{timedep}$ holds for this $p$.   
            Assume that $ \bm{u} \in [L^p((0,T); L^1(\mathbb{R}^d , \psi))]^d$,  
            $\bm{u}|_{B_R} \in \mathbb{H}_{0}^{(2s, p);1}((0,T)\times B_{R})$  
            and satisfy $\bm{f} = \partial_t \bm{u} + (-\Delta)^s \bm{u} + \lambda \bm{u}$ in $(0, T)\times B_R.$           Let $q \in (p, \infty)$   satisfy 
        $       \frac{1}{q} = \frac{1}{p} - \frac{2s}{d+2s}.$ 

Then 
                \begin{enumerate}
                 \item if $p \leq \frac{d}{2s} + 1$, 
                 for any $\ell \in [p,q]$, 
                   \begin{equation}\label{sob-smallp}
                   \begin{split}
                   \|\bm{u}\|_{[L^\ell( (0,T) \times B_r)]^d} &\leq N \|\bm{f}\|_{[L^p((0,T) \times B_R )]^d} \\
                   &+ N \frac{\|\bm{u}\|_{[L^p((0,T) \times  B_R )]^d}}{(R-r)^{2s}} + N \frac{R^{d/p} (1+R^{d+2s})}{(R-r)^{d+2s}} \|\bm{u}\|_{[L^p((0,T); L^1(\mathbb{R}^d , \psi))]^d}
                   \end{split}
                   \end{equation}
		\item if $p > \frac{d}{2s} + 1 $, there exists $\tau  =  2s - (d+2s)/p \in (0, 1) $  such that 
		\begin{equation}\label{sob-largep}
		    \begin{split}
                   \|\bm{u}\|_{[C^{\tau/2s,\tau}( (0,T) \times B_r)]^d} &\leq N \|\bm{f}\|_{[L^p((0,T) \times B_R)]^d} \\
                   &+ N \frac{\|\bm{u}\|_{[L^p((0,T) \times B_R)]^d}}{(R-r)^{2s}} + N \frac{R^{d/p} (1+R^{d+2s})}{(R-r)^{d+2s}} \|\bm{u}\|_{[L^p((0,T); L^1(\mathbb{R}^d , \psi))]^d} 
                             \end{split}                  
                   \end{equation}
                 \end{enumerate}
                 The constant $N$ depends on $d,s, p, T, \alpha_2$. 
\end{lemma}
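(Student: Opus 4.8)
The plan is to localize the equation with a smooth spatial cutoff, recast the problem for the cutoff function as a global fractional-heat system covered by Theorem~\ref{timedep}(A), and then feed the resulting $\mathbb{H}^{(2s,p);1}$ bound into an anisotropic (parabolic) Sobolev embedding; the intermediate integrability exponents $\ell\in(p,q)$ will come from interpolation. Concretely, fix $\zeta\in C_c^\infty(B_R)$ with $\zeta\equiv 1$ on $B_r$ and $\|\nabla^k\zeta\|_{L^\infty}\le N(R-r)^{-k}$, $k=1,2$. By the remark preceding the statement, $\zeta\bm u$ (extended by ${\bfs 0}$) lies in $\mathbb{H}_0^{(2s,p);1}(\mathbb{R}^d_T)$, and since $\zeta$ does not depend on $t$,
\[
\partial_t(\zeta\bm u)+(-\Delta)^s(\zeta\bm u)+\lambda(\zeta\bm u)=\zeta\bm f+\bm h\quad\text{in }\mathbb{R}^d_T,\qquad \bm h:=(-\Delta)^s(\zeta\bm u)-\zeta(-\Delta)^s\bm u .
\]
From the singular-integral form of $(-\Delta)^s$ one gets $\bm h(t,\bm x)=c_{d,s}\int_{\mathbb{R}^d}\big(\zeta(\bm x)-\zeta(\bm x+\bm y)\big)\,\bm u(t,\bm x+\bm y)\,|\bm y|^{-d-2s}\,d\bm y$, and this is where the global hypothesis $\bm u\in[L^p((0,T);L^1(\mathbb{R}^d,\psi))]^d$ is used, making the integral absolutely convergent for a.e.\ $(t,\bm x)$.

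Next I would estimate $\bm h$. Split the $\bm y$-integral into a near region and a far region. On the near region the factor $\zeta(\bm x)-\zeta(\bm x+\bm y)$ vanishes to first order (to second order after using the evenness of $|\bm y|^{-d-2s}$ when $s\ge 1/2$), so that piece is dominated pointwise by $N(R-r)^{-2s}$ times a local average of $|\bm u(t,\cdot)|$ over a fixed dilate of $B_R$; taking the $L^p$-norm in $(t,\bm x)$ over $(0,T)\times B_R$ contributes the term $N(R-r)^{-2s}\|\bm u\|_{[L^p((0,T)\times B_R)]^d}$. On the far region $\zeta(\bm x+\bm y)=0$ while $\bm x\in B_R$, so the integrand is $\zeta(\bm x)\bm u(t,\bm x+\bm y)|\bm y|^{-d-2s}$; comparing $|\bm y|=|\bm x+\bm y-\bm x|$ with $|\bm x+\bm y|$ for $\bm x\in B_R$ turns this into $N(1+R^{d+2s})(R-r)^{-d-2s}\,\|\bm u(t,\cdot)\|_{L^1(\mathbb{R}^d,\psi)}$ pointwise, and the $L^p_{\bm x}(B_R)$ norm of a quantity independent of $\bm x$ supplies the factor $R^{d/p}$. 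Adding $\|\zeta\bm f\|_{[L^p(\mathbb{R}^d_T)]^d}\le\|\bm f\|_{[L^p((0,T)\times B_R)]^d}$ yields
\[
\|\zeta\bm f+\bm h\|_{[L^p(\mathbb{R}^d_T)]^d}\le N\|\bm f\|_{[L^p((0,T)\times B_R)]^d}+\frac{N\|\bm u\|_{[L^p((0,T)\times B_R)]^d}}{(R-r)^{2s}}+\frac{N\,R^{d/p}(1+R^{d+2s})}{(R-r)^{d+2s}}\|\bm u\|_{[L^p((0,T);L^1(\mathbb{R}^d,\psi))]^d}.
\]

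Now apply Theorem~\ref{timedep}(A) to $\zeta\bm u\in\mathbb{H}_0^{(2s,p);1}(0,T)$, which solves the fractional heat system with right-hand side $\zeta\bm f+\bm h$: by \eqref{esttimedep} together with the equivalence $\|(-\Delta)^s\cdot\|_{[L^p]^d}\asymp\|\mathbb{L}\cdot\|_{[L^p]^d}$ from Corollary~\ref{Lbdd} (valid at this $p$, since Theorem~\ref{timedep} is assumed for $p$), one gets $\|\zeta\bm u\|_{\mathbb{H}^{(2s,p);1}(T)}\le N\|\zeta\bm f+\bm h\|_{[L^p(\mathbb{R}^d_T)]^d}$; alternatively, as $(-\Delta)^s$ decouples, one may invoke the scalar statement componentwise. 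The space $\mathbb{H}^{(2s,p);1}(\mathbb{R}^d_T)$ carries the parabolic scaling in which $t$ has weight $2s$, hence parabolic homogeneous dimension $d+2s$, so the corresponding Sobolev embedding gives $\mathbb{H}^{(2s,p);1}(\mathbb{R}^d_T)\hookrightarrow[L^q(\mathbb{R}^d_T)]^d$ with $1/q=1/p-2s/(d+2s)$ when $p\le d/(2s)+1$, and $\hookrightarrow[C^{\tau/2s,\tau}(\mathbb{R}^d_T)]^d$ with $\tau=2s-(d+2s)/p$ when $p> d/(2s)+1$. Restricting to $(0,T)\times B_r$, where $\zeta\equiv 1$, and combining with the previous step yields \eqref{sob-largep} and the endpoint $\ell=q$ of \eqref{sob-smallp}; the endpoint $\ell=p$ is immediate by Hölder on the bounded cylinder, and the intermediate $\ell\in(p,q)$ follow by interpolation between the $L^p$ and $L^q$ bounds on $(0,T)\times B_r$.

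I expect the main obstacle to be the bookkeeping in the commutator estimate: the near/far split must be arranged so that the interior piece loses exactly $(R-r)^{-2s}$ rather than the $(R-r)^{-1}$ a naive first-order estimate would give — which requires exploiting the second-order cancellation and, for $s\ge 1/2$, the evenness of the kernel — and so that the tail piece is genuinely the weight $\psi(\bm x)=1/(1+|\bm x|^{d+2s})$ paired with $|\bm u|$ uniformly over $\bm x\in B_R$, which is precisely what forces the geometric prefactor $R^{d/p}(1+R^{d+2s})(R-r)^{-d-2s}$. The remaining ingredients — the product-rule identity, the invocation of \eqref{esttimedep}, the parabolic Sobolev embedding, and the final interpolation — are routine, and since only the uncoupled operator $(-\Delta)^s$ appears, the coupling structure of $\mathbb{L}$ plays no role here.
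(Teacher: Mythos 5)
Your argument follows the same route that the paper (via the Remark after the lemma, and the cited Corollary 3.4 and Lemmas A.3, A.6 of Dong's paper) uses: localize with a spatial cutoff, write the commutator $\bm h=(-\Delta)^s(\zeta\bm u)-\zeta(-\Delta)^s\bm u$, split the commutator integral into a near region (where the cutoff's smoothness supplies the $(R-r)^{-2s}$ gain) and a far region (where the $\psi$-weight emerges with the $R^{d/p}(1+R^{d+2s})(R-r)^{-d-2s}$ prefactor), apply Theorem~\ref{timedep}(A), and finish with the parabolic Sobolev embedding and interpolation. One small omission: \eqref{esttimedep} controls $\partial_t(\zeta\bm u)$, $\mathbb{L}(\zeta\bm u)$, and $\lambda(\zeta\bm u)$, but not $\|\zeta\bm u\|_{L^p}$ when $\lambda=0$, so to pass to the full inhomogeneous $\mathbb{H}^{(2s,p);1}(T)$ norm you also need the Poincar\'e-type bound $\|\zeta\bm u\|_{L^p(\mathbb{R}^d_T)}\le T\|\partial_t(\zeta\bm u)\|_{L^p(\mathbb{R}^d_T)}$ (valid since $\zeta\bm u\in\mathbb{H}_0^{(2s,p);1}$ vanishes at $t=0$); this is precisely why the constant $N$ in the lemma depends on $T$, a dependence your sketch does not record.
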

\begin{remark}
The proof of Lemma \ref{cor} is given in \cite{Dong:2023aa}, see the proof of \cite[Corollary 3.4]{Dong:2023aa}. 
Moreover, if $\bm{u}$  solves $\bm{f} = \partial_t \bm{u} + \mathbb{L} \bm{u} + \lambda \bm{u}$ in $(0, T)\times B_R$ instead, under the assumption of Lemma \ref{cor}, inequalities \eqref{sob-smallp} and \eqref{sob-largep} remain valid. Indeed, for  $\zeta_0 \in C^\infty_0(B_{(R+r)/2}),$ $\zeta_0 = 1$ in  $B_r$  and $ |\nabla\zeta_0| \leq 4/(R-r)$, $\zeta_{0}\bm{u}|_{B_{R}} \in \mathbb{H}_{0}^{(2s, p);1}(\mathbb{R}_{T}^{d})$ and that 
\[
           \partial_t(\zeta_0 \bm{u}) +\mathbb{L} (\zeta_0 \bm{u}) + \lambda (\zeta_0 \bm{u})= \zeta_0  \bm{f} +\mathbb{L} (\zeta_0 \bm{u})  - \zeta_0 \mathbb{L} ( \bm{u}) \quad \text{in} \  (0,T) \times \mathbb{R}^d . 
\]
applying part (B) of \Cref{timedep}, which is assumed to be true for the fixed $p$, we have 
\[
\begin{split}
\|\partial_t  (\zeta_0 \bm{u})\|_{[L^p(\mathbb{R}_{T}^d)]^d} &+  \|(-\Delta)^s (\zeta_0 \bm{u}) \|_{[L^p(\mathbb{R}_{T}^d)]^d} + \lambda \| \zeta_0 \bm{u}\|_{[L^p(\mathbb{R}_{T}^d)]^d} \\
&  \leq N \|\zeta_0  \bm{f} +\mathbb{L} (\zeta_0 \bm{u})  - \zeta_0 \mathbb{L} ( \bm{u}) \|_{[L^p(\mathbb{R}_{T}^d)]^d}\\
&\leq N \|\zeta_0  \bm{f}\|_{[L^p(\mathbb{R}_{T}^d)]^d} + N \|\mathbb{L} (\zeta_0 \bm{u})  - \zeta_0 \mathbb{L} ( \bm{u}) \|_{[L^p(\mathbb{R}_{T}^d)]^d}. 
\end{split}
\]
The commutator $\|\mathbb{L} (\zeta_0 \bm{u})  - \zeta_0 \mathbb{L} ( \bm{u}) \|_{[L^p(\mathbb{R}_{T}^d)]^d}$ can be estimated in exactly the same way as \cite[Lemma A.3]{Dong:2023aa} resulting the estimate 
\[\begin{split}
&\|\partial_t  (\zeta_0 \bm{u})\|_{[L^p(\mathbb{R}_{T}^d)]^d} +  \|(-\Delta)^s (\zeta_0 \bm{u}) \|_{[L^p(\mathbb{R}_{T}^d)]^d} + \lambda \| \zeta_0 \bm{u}\|_{[L^p(\mathbb{R}_{T}^d)]^d} \\
&  \leq N \|\bm{f}\|_{[L^p( (0,T) \times B_R )]^d} + N \frac{\|\bm{u}\|_{[L^p((0,T) \times  B_R)]^d}}{(R-r)^{2s}} + N \frac{R^{d/p} (1+R^{d+2s})}{(R-r)^{d+2s}} \|\bm{u}\|_{[L^p((0,T); L^1(\mathbb{R}^d , \psi))]^d}.  
\end{split}
\]
We use this and Sobolev embedding \cite[Lemma A.6]{Dong:2023aa} to obtain the desired estimates. 
\end{remark}

Given $r_1,r_2>0$, we define the parabolic cylinder by 
\[
Q_{r_1,r_2}(t, \bm{x}) :=  (t - r_1^{2s}, t) \times B_{r_2}(\bm{x})  \quad \text{and} \quad Q_{r}(t, \bm{x}) = Q_{r,r}(t, \bm{x}).
\]

\begin{proposition}
\label{decomposition}
    Fix $p \in (1, \infty)$. Let $s \in (0,1), \lambda \geq 0$, and $ T \in (0, \infty)$. 
   Suppose that Theorem $\ref{timedep}$ holds for this $p$.   
    Let $\bm{u} \in \mathbb{H}^{(2s,p); 1}_0(\mathbb{R}^d_{T})$ satisfy the equation 
    \[
       \partial_t \bm{u} + (-\Delta)^s \bm{u} + \lambda \bm{u} = \bm{f} \quad \text{in} \ (0, T) \times \mathbb{R}^d .
       \]
       Then there exists $p_1 = p_1(d,s,p) \in (p, \infty]$ such that $p_1- p > \delta(d, s) >0$ and the following holds:
       for any $(t_0 , \bm{x}_0) \in \mathbb{R}^d_T$, $R>0$, and $S = \min \{0, t_0 - R^{2s}\}$ the decomposition $\bm{u} = \bm{w} + \bm{v} $ in $Q_R(t_0 , \bm{x}_0)$ holds where 
       \[
       \bm{w}  \in \mathbb{H}^{(2s,p); 1}_0( (t_0 - R^{2s}, t_0) \times \mathbb{R}^d ) ,\quad  \bm{v} \in \mathbb{H}^{(2s,p); 1}_0( (S,t_0) \times \mathbb{R}^d ), 
       \]

{\Small \begin{align} 
\label{no1}
&\left(\fint_{Q_R(t_0, \bm{x}_0)} |\mathbb{L} \bm{w}(t, \bm{x} )|^p \ d\bm{x} \ dt \right)^{1/p} + \left(\fint_{Q_R(t_0, \bm{x}_0)} |\lambda  \bm{w}(t, \bm{x} )|^p \ d\bm{x} \ dt \right)^{1/p}  \leq N \left(\fint_{Q_{2R}(t_0, \bm{x}_0)} |\bm{f}(t, \bm{x} )|^p \ d\bm{x} \ dt \right)^{1/p}
\end{align}}

and, if $p_1< \infty$,

{\small \begin{align}
\label{no2}
\left(\fint_{Q_{R/2}(t_0, \bm{x}_0)} |\mathbb{L} \bm{v}(t, \bm{x})|^{p_1} \ d\bm{x} \ dt\right)^{1/p_1}  \nonumber 
&\leq N \left(\fint_{Q_{2R}(t_0, \bm{x}_0)} |\bm{f}(t, \bm{x})|^p \ d\bm{x} \ dt \right)^{1/p}\\
& + N \sum_{k=0}^\infty 2^{-2ks} \left(\fint_{(t_0 - R^{2s}, t_0) \times B_{2^kR}(\bm{x}_0) } |\bm{f}(t, \bm{x})|^p \ d\bm{x} \ dt \right)^{1/p} \\
&+ N \sum_{k=0}^\infty 2^{-2ks} \left(\fint_{ (t_0 - R^{2s} , t_0) \times B_{2^kR}(\bm{x}_0) } |\mathbb{L} \bm{u}(t, \bm{x})|^p \ d\bm{x} \ dt \right)^{1/p}  \nonumber\\
&+ N \sum_{k=0}^\infty 2^{-2ks} \left(\fint_{ (t_0 -(2^{k+1}+1) R^{2s}, t_0) \times B_{R}(\bm{x}_0)} |\mathbb{L} \bm{u}(t, \bm{x})|^p \ d\bm{x} \ dt \right)^{1/p} ,  \nonumber 
\end{align}}
and 
{\small \begin{align}
\label{no3}
&\left(\fint_{Q_{R/2}(t_0, \bm{x}_0)} |\lambda \bm{v}(t, \bm{x})|^{p_1} \ d\bm{x} \ dt\right)^{1/p_1} \nonumber \\
&\leq N \left(\fint_{Q_{2R}(t_0, \bm{x}_0)} |\bm{f}(t, \bm{x})|^p \ d\bm{x} \ dt \right)^{1/p} + N \sum_{k=0}^\infty 2^{-2ks} \left(\fint_{ (t_0 - R^{2s}, t_0) \times B_{2^kR}(\bm{x}_0) } |\bm{f}(t, \bm{x})|^p \ d\bm{x} \ dt \right)^{1/p} \\
&+ N \sum_{k=0}^\infty 2^{-2ks} \left(\fint_{ (t_0 - R^{2s} , t_0) \times B_{2^kR}(\bm{x}_0) } |\mathbb{L} \bm{u}(t, \bm{x})|^p \ d\bm{x} \ dt \right)^{1/p} \nonumber \\
&+ N \sum_{k=0}^\infty 2^{-2ks} \left(\fint_{ (t_0 -(2^{k+1}+1) R^{2s}, t_0) \times B_{R}(\bm{x}_0) } |\mathbb{L} \bm{u}(t, \bm{x})|^p \ d\bm{x} \ dt \right)^{1/p}.  \nonumber
\end{align}}
The constant $N$ depends only on $d,s, p, \alpha_1, \alpha_2$. 

If $p_1 = \infty$, then 
$
\left(\fint_{Q_{R/2}(t_0, \bm{x}_0)} |\cdot|^{p_1} \ d\bm{x}  \ dt \right)^{1/{p_1}}
$
is replaced by 
$\|\cdot\|_{L^\infty(Q_{R/2}(t_0, \bm{x}_0))}$ 
and $\bm{u}, \bm{f}$ are extended to be zero for $t<0$. 
\end{proposition}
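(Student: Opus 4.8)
The plan is to transplant, almost verbatim, the scalar decomposition of \cite{Dong:2023aa} to the system setting, replacing $\mathcal{L}$ by the strongly-coupled operator $\mathbb{L}$ and feeding in the vector-valued ingredients already in hand: the solvability together with estimate \eqref{esttimedep} for $\partial_t+(-\Delta)^s+\lambda$ (part (A) of Theorem \ref{timedep}), the commutation relations $\mathbb{L}\,\partial_t=\partial_t\,\mathbb{L}$ and $\mathbb{L}(-\Delta)^s=(-\Delta)^s\mathbb{L}$ (Proposition \ref{commute}, applied first to smooth approximants of $\bm u$ and then extended by the continuity from Corollary \ref{Lbdd}), the two-sided comparison $\|(-\Delta)^s\,\cdot\,\|_{L^p}\asymp\|\mathbb{L}\,\cdot\,\|_{L^p}$ (Corollary \ref{Lbdd} and Remark \ref{Lbdd-time}), the interior gain Lemma \ref{cor} (which presupposes Theorem \ref{timedep} only at the fixed $p$), and the cut-off commutator bound for $\mathbb{L}(\zeta\,\cdot)-\zeta\,\mathbb{L}(\cdot)$, proved exactly as \cite[Lemma A.3]{Dong:2023aa}. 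The first reduction is to normalize $R=1$ and $\bm x_0=\bm 0$: the parabolic dilation $\bm u\mapsto\bm u(R^{2s}t,R\bm x)$ sends $K$ to $R^{d+2s}K(R\,\cdot\,)$, which obeys the same ellipticity and cancellation conditions \eqref{Ellipticity}--\eqref{cancellation}, sends $\lambda$ to $R^{2s}\lambda\ge 0$, and leaves both sides of \eqref{no1}--\eqref{no3} invariant since they are built out of scale-invariant averages; afterwards no power of $R$ survives and all constants depend only on $d,s,p,\alpha_1,\alpha_2$.

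The construction has two steps. Fix $\chi\in C_c^\infty(B_2)$ with $\chi\equiv 1$ on $B_{3/2}$. Step (1): let $\bm w$ be the solution furnished by part (A) of Theorem \ref{timedep} of $\partial_t\bm w+(-\Delta)^s\bm w+\lambda\bm w=\chi\bm f$ on $(t_0-1,t_0)\times\mathbb{R}^d$ with zero trace at $t_0-1$; then \eqref{esttimedep} gives $\|\mathbb{L}\bm w\|_{L^p}+\lambda\|\bm w\|_{L^p}\le N\|\chi\bm f\|_{L^p}\le N\|\bm f\|_{L^p(Q_2)}$ (using $(t_0-1,t_0)\subset(t_0-2^{2s},t_0)$), and dividing by $|Q_1|^{1/p}$ and using $|Q_2|/|Q_1|=2^{d+2s}$ yields \eqref{no1}. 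Step (2): set $\bm v:=\bm u-\bm w$ (extended by $\bm u$ below $t_0-1$, which gives $\bm v\in\mathbb{H}^{(2s,p);1}_0((S,t_0)\times\mathbb{R}^d)$ as in \cite{Dong:2023aa}). On $(t_0-1,t_0)\times\mathbb{R}^d$ one has $\partial_t\bm v+(-\Delta)^s\bm v+\lambda\bm v=(1-\chi)\bm f$, whose right-hand side, together with all its derivatives, vanishes on $B_{3/2}\supset Q_{1/2}$; so $\bm v$ solves a homogeneous parabolic system near $Q_{1/2}$. Applying $\mathbb{L}$ (resp.\ multiplying by $\lambda$) and commuting through $\partial_t$ and $(-\Delta)^s$ shows that $\mathbb{L}\bm v$ (resp.\ $\lambda\bm v$) solves the $(-\Delta)^s$-parabolic system with source $\mathbb{L}[(1-\chi)\bm f]$ (resp.\ $\lambda(1-\chi)\bm f$); since $(1-\chi)\bm f$ and its derivatives vanish near $\bm 0$, for $\bm x$ in a fixed neighbourhood of $Q_{1/2}$ the singular part of $\mathbb{L}$ does not act and
\[
\bigl|\mathbb{L}[(1-\chi)\bm f](t,\bm x)\bigr|\ \le\ C\!\int_{|\bm y|\gtrsim 1}\frac{|\bm f(t,\bm x+\bm y)|}{|\bm y|^{d+2s}}\,d\bm y\ \le\ C\sum_{k\ge 0}2^{-2ks}\Bigl(\fint_{B_{2^k}}|\bm f(t,\cdot)|^p\Bigr)^{1/p}
\]
by Hölder's inequality and dyadic annuli (and $\lambda(1-\chi)\bm f\equiv 0$ there), which after integration in $t$ over $(t_0-1,t_0)$ produces the $\bm f$-sums in \eqref{no2}--\eqref{no3}.

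The final step is the gain of integrability: apply Lemma \ref{cor} (the $(-\Delta)^s$-version) to $\mathbb{L}\bm v$ and to $\lambda\bm v$ on a cylinder slightly larger than $Q_{1/2}$, yielding the exponent $p_1:=q$ with $1/p_1=1/p-2s/(d+2s)$ — whence $p_1-p\ge\delta(d,s)$ with, for instance, $\delta(d,s)=s/d$ — when $p\le\frac{d}{2s}+1$, and $p_1:=\infty$ via $C^{\tau/2s,\tau}\hookrightarrow L^\infty$ when $p>\frac{d}{2s}+1$. On the right-hand side of Lemma \ref{cor} the source term has just been estimated; the local $L^p$-norm $\|\mathbb{L}\bm v\|_{L^p}$ and the weighted tail $\|\mathbb{L}\bm v\|_{L^p((t_0-1,t_0);L^1(\psi))}$ are split via $\mathbb{L}\bm v=\mathbb{L}\bm u-\mathbb{L}\bm w$: the $\mathbb{L}\bm w$-contributions are controlled by $N\|\bm f\|_{L^p(Q_2)}$ (Step (1) and $\psi\in L^{p'}$), and the $\mathbb{L}\bm u$-contributions, after dyadically decomposing the weight $\psi$, by the spatial $\mathbb{L}\bm u$-sum in \eqref{no2}. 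Running the same computation with $\lambda\bm v$ in place of $\mathbb{L}\bm v$ (its source vanishing near $Q_{1/2}$) gives \eqref{no3}, after handling $\lambda\bm u$ through the equation $\lambda\bm u=\bm f-\partial_t\bm u-(-\Delta)^s\bm u$ together with $\|(-\Delta)^s\bm u\|_{L^p}\le N\|\mathbb{L}\bm u\|_{L^p}$ from Corollary \ref{Lbdd}.

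The step I expect to cost the most bookkeeping — and the only genuinely delicate point — is the treatment of the initial trace $\bm v|_{t=t_0-1}=\bm u|_{t=t_0-1}$, which need not vanish when $t_0\ge 1$: propagating it by the matrix heat kernel $\mathbb{W}$ (using the already-recorded bounds on $\int_0^T|\mathbb{W}(t,\bm\xi)|\,dt$ and $\int_0^T|\partial_t\mathbb{W}(t,\bm\xi)|\,dt$) and re-expressing it through the equation satisfied by $\bm u$ at earlier times converts its contribution precisely into the backward-in-time dyadic sums $\sum_k 2^{-2ks}\bigl(\fint_{(t_0-(2^{k+1}+1),t_0)\times B_1}|\mathbb{L}\bm u|^p\bigr)^{1/p}$ appearing in \eqref{no2}--\eqref{no3}; this is carried out exactly as in the scalar proof of \cite{Dong:2023aa}, the only change being that scalar convolution estimates are replaced by their matrix analogues. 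The strongly-coupled nature of $\mathbb{L}$ introduces no structural obstruction here: unlike the scalar situation one cannot identify $\mathbb{L}$ with $(-\Delta)^s$, but every place where that identification was used in \cite{Dong:2023aa} is replaced by an invocation of part (A) of Theorem \ref{timedep}, Corollary \ref{Lbdd}, or Proposition \ref{commute}.
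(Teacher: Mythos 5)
Your decomposition $\bm{u}=\bm{w}+\bm{v}$ with $\bm{w}$ solving the $(-\Delta)^s$-parabolic system with cut-off source and $\bm{v}=\bm{u}-\bm{w}$ is the right starting point, the scaling reduction is correct, and the dyadic estimate for $\mathbb{L}[(1-\chi)\bm f]$ near $Q_{1/2}$ is essentially what the paper does. However there are two genuine gaps.

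\textbf{Missing time cut-off.} To apply Lemma \ref{cor} to $\mathbb{L}\bm{v}$ on the time slab $(t_0-1,t_0)$ one needs the trace at $t=t_0-1$ to vanish, and $\bm{v}(t_0-1,\cdot)=\bm{u}(t_0-1,\cdot)$ is generically nonzero when $t_0\geq 1$. The paper resolves this not by heat-kernel propagation but by a second cut-off \emph{in time only}: take $\eta\in C^\infty$ with $\eta\equiv 1$ on $(t_0-(1/2)^{2s},t_0)$ and $\eta\equiv 0$ off $(t_0-1,t_0+1)$, set $\bm{h}=\eta\bm{v}\in\mathbb{H}^{(2s,p);1}_0((t_0-1,t_0)\times\mathbb{R}^d)$, derive $\partial_t\bm{h}+(-\Delta)^s\bm{h}+\lambda\bm{h}=\eta'\bm{v}+\eta(1-\zeta)\bm{f}$, mollify in $\bm{x}$ (to invoke Proposition \ref{commute} legitimately on $C^\infty_b$ functions), apply $\mathbb{L}$, then use Lemma \ref{cor} and pass to the limit by Fatou. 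The extra $\eta'\bm{v}$ source and the $\mathbb{L}\bm{h}$ terms are then simply absorbed into $\|\mathbb{L}\bm{v}\|_{L^p(Q_1)}$ using $|\eta|\leq 1$, $|\eta'|\leq N(s)$, and $\bm{v}=\bm{u}-\bm{w}$. Your proposal to instead propagate the trace by the matrix heat kernel $\mathbb{W}$ is a much heavier alternative that you do not actually carry out, and the assertion that it is done ``exactly as in the scalar proof of \cite{Dong:2023aa}'' is not accurate — that proof too uses the $\eta$-device. In particular the paper's proof does not produce the backward-in-time dyadic sums $\sum_k 2^{-2ks}\left(\fint_{(t_0-(2^{k+1}+1)R^{2s},t_0)\times B_R}|\mathbb{L}\bm{u}|^p\right)^{1/p}$ at all — those extra nonnegative terms are included in the statement only for the later application of the strong maximal function in Lemma \ref{balls}, and the proved estimate is the one with the spatial sums over $(t_0-1,t_0)\times B_{2^k}$.

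\textbf{The $\lambda\bm{v}$ branch.} Your plan to derive \eqref{no3} by writing $\lambda\bm{u}=\bm{f}-\partial_t\bm{u}-(-\Delta)^s\bm{u}$ and then invoking $\|(-\Delta)^s\bm{u}\|_{L^p}\le N\|\mathbb{L}\bm{u}\|_{L^p}$ does not work: the comparison of Corollary \ref{Lbdd} and Remark \ref{Lbdd-time} is \emph{global} on $\mathbb{R}^d$, while \eqref{no3} requires bounds on the fixed cylinders $(t_0-R^{2s},t_0)\times B_{2^kR}$ and $(t_0-(2^{k+1}+1)R^{2s},t_0)\times B_R$. There is no local equivalence between $(-\Delta)^s\bm{u}$ and $\mathbb{L}\bm{u}$. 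The right way, exactly parallel to the $\mathbb{L}\bm{v}$ branch, is to apply Lemma \ref{cor} to the equation satisfied by $\lambda\bm{h}^\epsilon$ itself (its source being $\lambda\eta'\bm{v}^\epsilon+\lambda(\eta(1-\zeta)\bm{f})^\epsilon$, and $\lambda(1-\chi)\bm{f}$ again vanishing near $Q_{1/2}$), which naturally produces the desired estimate with the right-hand side built from $\bm{f}$ and from lower-order norms of $\lambda\bm{v}=\lambda\bm{u}-\lambda\bm{w}$, controlled by \eqref{west}.
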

\begin{proof}
We assume $\bm{x}_0 =\bm{0}, R= 1$. Otherwise, for general $R>0$ and general $\bm{x}_0 \neq \bm{0}$, consider $\Tilde{\bm{u}} (t, \bm{x}) = R^{-2s} \bm{u}( R^{2s} t, R(\bm{x}-\bm{x}_0)$ and $\Tilde{\bm{f}}(t, \bm{x})= \bm{f}(R^{2s}t, R(\bm{x}-\bm{x}_0))$. Then $ \partial_t \Tilde{\bm{u}} +  (-\Delta)^s  \Tilde{\bm{u}} + \lambda \Tilde{\bm{u}} = \Tilde{\bm{f}}$ in $ (0, \Tilde{T}) \times \mathbb{R}^d $, where $\Tilde{T} = R^{-2s} T$. 
The parabolic cylinder will be $\Tilde{Q}_1(t, 0) = (t-1, t) \times B_1 $. 
Using these changes of variable,  the terms in the estimates remain unchanged. 
    
    For any $t_0 \in [0,T]$, we take a cutoff function $\zeta \in C_0^\infty( (t_0- 2^{2s}, t_0 + 2^{2s}) \times B_2 )$ satisfying 
    \[
    \zeta \in [0,1] \quad \text{and} \quad \zeta = 1 \quad \text{in} \  (t_0 -1, t_0) \times B_1 .
    \]    
    Since Theorem $\ref{timedep}$ is assumed to hold, there exists $\bm{w} \in \mathbb{H}^{(2s,p);1}_0(t_0-1,t_0) $ satisfying 
    \[
        \partial_t \bm{w} + (-\Delta)^s \bm{w} + \lambda \bm{w} =\zeta \bm{f} \quad \text{in} \  (t_0-1, t_0) \times \mathbb{R}^d
\]
    and 
      \begin{align}
      \label{west}
    \|\partial_t  \bm{w}\|_{[L^p( (t_0-1, t_0)  \times \mathbb{R}^d )]^d} +  \|\mathbb{L} \bm{w}\|_{[L^p( (t_0-1, t_0) \times \mathbb{R}^d )]^d} &+ \lambda \| \bm{w}\|_{[L^p( (t_0-1, t_0) \times \mathbb{R}^d )]^d}  \nonumber\\
      &\leq N \| \zeta \bm{f}\|_{[L^p(  (t_0-1, t_0) \times \mathbb{R}^d)]^d} \nonumber\\
      &\leq N \| \bm{f}\|_{[L^p(Q_2(t_0,0)]^d},
      \end{align}
    where $N = N(d, s, p, \alpha_1, \alpha_2)$.  Estimate $(\ref{no1})$ now follows. 
    
   Now take $S = \min(0, t_0-1) $ and $\bm{v} = \bm{u} -\bm{w}$. By taking the zero extension of $\bm{w}$ for $t < t_0 -1$, we have $\bm{w} \in \mathbb{H}^{(2s,p); 1}_0((S,t_0)\times \mathbb{R}^d)$.  Thus $\bm{v} \in \mathbb{H}^{(2s,p);1}_0((S, t_0)\times \mathbb{R}^d )$ and 
    \[
    \partial_t \bm{v} + (-\Delta)^s \bm{v} + \lambda \bm{v} = (1-\zeta) \bm{f} \quad \text{in} \ (S,t_0) \times \mathbb{R}^d  .
    \]
    Moreover, we take $ \eta \in C^\infty(\mathbb{R})$ such that 
    \[
    \eta(t) = \begin{cases} 
    1 \quad \text{when} \ t \in (t_0-(1/2)^{2s},t_0),\\
    0  \quad \text{when} \ t \in (t_0-1,t_0+1)^\complement,\\
    \end{cases}
    \]
    and 
   $
    |\eta'|\leq N(s).
    $ 
    It follows that $\bm{h}:= \eta \bm{v} \in \mathbb{H}^{(2s,p);1}_0((t_0-1,  t_0) \times \mathbb{R}^d)$ and 
    \begin{align}
    \label{eqh}
    \partial_t \bm{h} + (-\Delta)^s \bm{h} + \lambda \bm{h} = \eta' \bm{v} + \eta(1 - \zeta) \bm{f}  \quad \text{in} \ (t_0-1,t_0) \times \mathbb{R}^d.   \end{align}
       We denote the convolution by   $\bm{g}^\epsilon := \bm{g} \ast \xi^\epsilon$, where 
    \[
    \xi \in C^\infty_0(\mathbb{R}^d), \quad \text{supp}(\xi) \subset B_1, \quad \int_{\mathbb{R}^d} \xi (\bm{x}) d\bm{x}= 1, \quad \xi^{\epsilon}(\cdot) := \epsilon^{-d} \xi(\cdot/\epsilon). 
    \]
   Mollifying equation \eqref{eqh} in the $\bm{x}$-variable first, and then commuting $(-\Delta)^{s}$ and the convolution, we obtain that 
    \[
    \partial_t \bm{h}^\epsilon + (-\Delta)^s \bm{h}^\epsilon + \lambda \bm{h}^\epsilon = \eta' \bm{v}^{\epsilon} + (\eta(1 - \zeta) \bm{f})^\epsilon \quad \text{in} \ (t_0-1,t_0) \times \mathbb{R}^d.
    \]
      After noting that for each $t,$ ${\bm h}^{\epsilon}(t,\cdot) \in [C^{\infty}_{b}(\mathbb{R}^{d})]^d$, and after applying $\mathbb{L}$ on both sides of the above equation, we may use  Propositions \ref{commute} 
to commute the operators $\mathbb{L}$ and $(-\Delta)^{s}$ to obtain, 
 \[
   \partial_t ( \mathbb{L} \bm{h}^\epsilon) + (-\Delta)^s (\mathbb{L} \bm{h}^\epsilon) + \lambda \mathbb{L} \bm{h}^\epsilon = \eta' \mathbb{L}  \bm{v}^\epsilon + \mathbb{L}( \eta[(1 - \zeta) \bm{f}]^\epsilon) \quad \text{in} \ (t_0-1,t_0) \times \mathbb{R}^d.
       \]
    Next, we take $p_1$ such that 
    \[
    \frac{1}{p_1} = \frac{1}{p} - \frac{s}{d+2s} \quad \text{if} \ p \leq \frac{d}{2s} +1, \quad \text{and}\quad p_1 = \infty \quad \text{if} \ p > \frac{d}{2s} +1.
    \]
Note that if $p \leq \frac{d}{2s} +1$, then  $p_1 - p  = {s p^{2}\over d + 2s - sp}> {s \over d+s} =:\delta(d,s) >0$.  

Suppose $p_1 < \infty$.   We begin by noting that since ${\bm h}\in \mathbb{H}^{(2s,p);1}_0((t_0-1,  t_0) \times \mathbb{R}^d)$ solves  \eqref{eqh} with a right hand side that is in $L^{p}$, the estimates of Theorem $\ref{timedep}$ hold for $\bm{h}$ for this $p$. We then have  $\mathbb{L} \bm{h} \in [L^p[( (S, t_0) \times \mathbb{R}^d )]^d$. Moreover,  again since Theorem $\ref{timedep}$ holds for  $p$, we may apply Corollary \ref{Lbdd} to conclude that $\mathbb{L}$ is bounded from $[H^{2s,p}(\mathbb{R}^d)]^d \to [L^{p}(\mathbb{R}^{d})]^d$. Thus for each $t\in (t_0 - 1, t_0)$ we can apply Proposition \ref{commutestar}, to commute $\mathbb{L}$  and the convolution so that $\mathbb{L} \bm{h}^\epsilon = [ \mathbb{L} \bm{h}]^ \epsilon \in [L^{p}((t_0 - 1, t_0); C_b^{\infty}(\mathbb{R}^{d}))]^d$. This implies that  a restriction of $\mathbb{L} \bm{h}^\epsilon$ over a bounded set can have an extension that will belong to  $\mathbb{H}^{(2s,p); 1}_0( (t_0-1, t_0) \times \mathbb{R}^d )$. From this and Lemma  \ref{cor},
    we have that for some $N = N(d, s, p, \alpha_1, \alpha_2)$
    \begin{equation} \label{epsilon-level}
    \begin{split}
        \|\mathbb{L} \bm{h}^\epsilon\|_{[L^{p_1}(Q_{1/2}(t_0,0))]^d} &\leq 
        \|\mathbb{L} \bm{h}^\epsilon\|_{[L^{p_1}( (t_0-1,t_0) \times B_{1/2} )]^d}\\
        & \leq N \| \eta' \mathbb{L}  \bm{v}^\epsilon + \mathbb{L}( \eta[(1 - \zeta) \bm{f}]^\epsilon)\|_{[L^{p}((t_0-1,t_0) \times B_{3/4})]^d}\\
&+ N \frac{\| \mathbb{L} \bm{h}^\epsilon \|_{[L^{p}( (t_0-1,t_0) \times B_{3/4})]^d}}{(1/4)^{2s}}\\
&+ N \frac{(3/4)^{d/p}(1+ (3/4)^{d+2s})}{(1/4)^{d+2s}} \|\mathbb{L} \bm{h}^\epsilon\|_{[L^p((t_0-1,t_0); L^1(\mathbb{R}^d , \psi))]^d} . 
      \end{split}
    \end{equation}
We will estimate the terms in the right hand side. 
Using Young's inequality for convolutions, the fact that $[(1 -\zeta){\bm f}]^{\epsilon}$ vanishes in $(t_0-1,t_0) \times B_{1-\epsilon},$ and Minkowski's inequality, 
 for sufficiently small $\epsilon$ we have that 
    \begin{align*}
      &   \|\mathbb{L}( \eta[(1 - \zeta) \bm{f}]^\epsilon) \|_{[L^{p}((t_0-1,t_0) \times B_{3/4})]^d} \\
         &\leq  \left\|\int_{\mathbb{R}^d} \frac{\bm{y} \otimes \bm{y} }{|\bm{y}|^2} \eta(\cdot)((1 - \zeta) \bm{f})^{\epsilon}(\cdot, \cdot+\bm{y}) K(\bm{y}) \ d\bm{y} \right\|_{[L^{p}((t_0-1,t_0) \times B_{3/4})]^d} \nonumber\\ 
         &=  \left\|\int_{|\bm{y}| > {1\over 4}-\epsilon} \frac{\bm{y} \otimes \bm{y} }{|\bm{y}|^2}  \eta(\cdot) ((1 - \zeta) \bm{f})^{\epsilon}(\cdot, \cdot+\bm{y})  K(\bm{y}) \ d\bm{y} \right\|_{[L^{p}((t_0-1,t_0) \times B_{3/4})]^d} \nonumber\\ 
         &\lesssim \left\|\int_{|\bm{y}| > {1\over 4}-2\epsilon}{|\bm{f}(\cdot, \cdot+\bm{y})|\over |\bm{y}|^{d + 2s}} \ d\bm{y} \right\|_{[L^{p}( (t_0-1,t_0) \times B_{3/4})]^d} \nonumber\\ 
          &\lesssim \left\| \sum_{k =-2}^\infty 2^{-kd-2ks} \int_{B_{2^k} \setminus B_{2^{k-1}}}  |\bm{f}(\cdot, \cdot + \bm{y})| \ d\bm{y} \right\|_{[L^{p}((t_0-1,t_0) \times B_{3/4} )]^d} \nonumber\\ 
          &\leq \sum_{k =0}^\infty 2^{-2ks}  \left( \fint_{ (t_0-1,t_0) \times B_{2^k}} |\bm{f}(t, \bm{x} )|^p \ d\bm{x} \ dt  \right)^{\frac{1}{p}} \nonumber,
    \end{align*}
    i.e. the term $\|\mathbb{L}( \eta[(1 - \zeta) \bm{f}]^\epsilon) \|_{[L^{p}( (t_0-1,t_0) \times B_{3/4})]^d}$ is uniformly bounded. Thus, \eqref{epsilon-level} can be estimated as 
     \begin{align*}
         &\|\mathbb{L} \bm{h}^\epsilon\|_{[L^{p_1}(Q_{1/2}(t_0,0)]^d} \\
         &\leq   N \| \eta' \mathbb{L}  \bm{v}^\epsilon\|_{[L^{p}((t_0-1,t_0) \times B_{3/4})]^d} + N  \sum_{k =0}^\infty 2^{-2ks}  \left( \fint_{ (t_0-1, t_0) \times B_{2^k}} |\bm{f}(t, \bm{x})|^p \ d\bm{x} \ dt \right)^{\frac{1}{p}}\nonumber\\
&+ N \frac{\| \mathbb{L} \bm{h}^\epsilon \|_{[L^{p}((t_0-1,t_0) \times B_{3/4})]^d}}{(1/4)^{2s}}+ N \frac{(3/4)^{d/p}(1+ (3/4)^{d+2s})}{(1/4)^{d+2s}} \|\mathbb{L} \bm{h}^\epsilon\|_{[L^p((t_0-1,t_0); L^1(\mathbb{R}^d , \psi))]^d}.        \end{align*}
Now since  ${\bm h}^{\epsilon} \to {\bm h}$ in $ \mathbb{H}^{(2s,p);1}_0((t_0-1,  t_0) \times \mathbb{R}^d)$ and that $\mathbb{L}$ is bounded from $[H^{2s,p}(\mathbb{R}^d)]^d \to [L^{p}(\mathbb{R}^{d})]^d$, we have that $\mathbb{L}{\bm h}^{\epsilon} \to \mathbb{L}{\bm h}$ in $L^{p}((t_0-1, t_0) \times \mathbb{R}^d).$  In particular, we have almost everywhere pointwise convergence. Hence, we can take the limit as $\epsilon \to 0$  and apply Fatou's lemma to obtain that 
  {\small    \begin{align}
     \label{epsilonzero}
      \|\mathbb{L} \bm{h}\|_{[L^{p_1}(Q_{1/2}(t_0,0))]^d} &\leq   N \| \eta' \mathbb{L}  \bm{v}\|_{[L^{p}((t_0-1,t_0) \times B_{3/4})]^d} + N  \sum_{k =0}^\infty 2^{-2ks}  \left( \fint_{ (t_0-1, t_0) \times B_{2^k}} |\bm{f}(t, \bm{x})|^p \ d\bm{x} \ dt \right)^{\frac{1}{p}}\nonumber\\
&+ N \| \mathbb{L} \bm{h} \|_{[L^{p}( (t_0-1,t_0) \times B_{3/4})]^d}+ N \|\mathbb{L} \bm{h}\|_{[L^p((t_0-1,t_0); L^1(\mathbb{R}^d , \psi))]^d}
        \end{align}}
      where $N = N(d, s, p, \alpha_1, \alpha_2)$.  
Since by definition $\bm{h} = \eta \bm{v}$ we can rewrite the above as, 
\begin{align}\label{epsilonzerop}
          \|\mathbb{L} \bm{v}\|_{[L^{p_1}(Q_{1/2}(t_0,0))]^d} &\leq  N  \sum_{k =0}^\infty 2^{-2ks}  \left( \fint_{(t_0-1, t_0) \times B_{2^k}} |\bm{f}(t, \bm{x})|^p \ d\bm{x} \ dt \right)^{\frac{1}{p}} \nonumber\\
&+ N \| \mathbb{L} \bm{v} \|_{[L^{p}( (t_0-1,t_0) \times B_{1})]^d}+ N \|\mathbb{L} \bm{v}\|_{[L^p((t_0-1,t_0); L^1(\mathbb{R}^d , \psi))]^d}. 
        \end{align}
  We use H\"older's inequality to estimate the last term in \eqref{epsilonzerop} as 
 \begin{align*}
      \|\mathbb{L} \bm{v}\|_{[L^p((t_0-1,t_0); L^1(\mathbb{R}^d , \psi))]^d }&= \left( \int_{t_0-1}^{t_0} \left(\int_{\mathbb{R}^d} |\mathbb{L} \bm{v}(t, \bm{x})| \psi(\bm{x}) \ d \bm{x} \right)^p \ dt \right)^{1/p}\\
&\leq \int_{\mathbb{R}^d} \left( \int_{t_0-1}^{t_0} |\mathbb{L} \bm{v}(t, \bm{x})|^p \ dt \right)^{1/p} \psi(\bm{x}) \ d \bm{x}  = I_1 + I_2,  
     \end{align*}
     where $I_1 =  \int_{B_1} \left( \int_{t_0-1}^{t_0} |\mathbb{L} \bm{v}(t, \bm{x})|^p \ dt \right)^{1/p} \psi(\bm{x}) \, d\bm{x}$ and $I_2 =  \int_{\mathbb{R}^{d}\setminus B_1} \left( \int_{t_0-1}^{t_0} |\mathbb{L} \bm{v}(t, \bm{x})|^p \ dt \right)^{1/p} \psi(\bm{x}) \, d\bm{x}$.  
     We can estimate $I_1$ and $I_2$ as follows:  
     \[
     I_1 \leq  \left( \int_{B_{1}}  \int_{t_0-1}^{t_0} |\mathbb{L} \bm{v}(t, \bm{x})|^p \ dt\, \psi(\bm{x}) \, d\bm{x} \right)^{1/p}\left( \int_{B_1}\psi(\bm{x})  \, d\bm{x} \right)^{1/p'}, 
     \]
     whereas, 
     \[
     \begin{split}
     I_2 &=   \sum_{k=1}^\infty \int_{B_{2^k} \setminus B_{2^{k-1}}} \left( \int_{t_0-1}^{t_0} |\mathbb{L} \bm{v}(t, \bm{x})|^p \ dt \right)^{1/p} \psi(\bm{x}) \, d \bm{x}\\
    &\leq  \sum_{k=1}^\infty \left( \int_{B_{2^k} \setminus B_{2^{k-1}}} \int_{t_0-1}^{t_0} |\mathbb{L} \bm{v}(t, \bm{x})|^p \ dt \,\psi(\bm{x})  \ d\bm{x} \right)^{1/p}\left( \int_{B_{2^k} \setminus B_{2^{k-1}}}\psi(\bm{x})  \ d\bm{x} \right)^{1/p'}\\
    & \leq \sum_{k=1}^\infty 2^{-2sk/p'} \left(2^{-2ks} \fint_{ (t_0-1,t_0) \times B_{2^k}} |\mathbb{L} \bm{v}(t, \bm{x})|^p \ d\bm{x} \ dt  \right)^{\frac{1}{p}}. 
     \end{split}
     \]
     Combing the estimates for $I_1$ and $I_2 $ we obtain that
     \[
       \|\mathbb{L} \bm{v}\|_{[L^p((t_0-1,t_0); L^1(\mathbb{R}^d , \psi))]^d } \leq \sum_{k=0}^\infty 2^{-2sk/p'} \left(2^{-2ks} \fint_{ (t_0-1,t_0) \times B_{2^k}} |\mathbb{L} \bm{v}(t, \bm{x})|^p \ d\bm{x} \ dt  \right)^{\frac{1}{p}}
     \]
     Using the relation  $\bm{v} = \bm{u} - \bm{w}$ and $(\ref{west})$, we obtain 
  {\small   \begin{align}
    \label{estim12}
       &\|\mathbb{L} \bm{v}\|_{[L^p((t_0-1,t_0); L^1(\mathbb{R}^d , \psi))]^d} \nonumber\\
&\leq N \sum_{k=0}^\infty  2^{-2ks} \left(\fint_{ (t_0-1,t_0) \times B_{2^k}} |\mathbb{L} \bm{v}(t, \bm{x})|^p \ d\bm{x} \ dt  \right)^{\frac{1}{p}} \nonumber\\
&\leq N \sum_{k=0}^\infty  2^{-2ks} \left(\fint_{ (t_0-1,t_0) \times B_{2^k}} |\mathbb{L} \bm{u}(t, \bm{x})|^p \ d\bm{x} \ dt  \right)^{\frac{1}{p}}+  N \sum_{k=0}^\infty  2^{-2ks} \left(\fint_{ (t_0-1,t_0) \times B_{2^k}} |\mathbb{L} \bm{w}(t, \bm{x})|^p \ d\bm{x} \ dt  \right)^{\frac{1}{p}} \nonumber\\
&\leq N  \sum_{k=0}^\infty 2^{-2ks} \left(\fint_{ (t_0-1,t_0) \times B_{2^k} } |\mathbb{L} \bm{u}(t, \bm{x})|^p \ d\bm{x} \ dt  \right)^{\frac{1}{p}} +  N   \left(\fint_{Q_{2}(t_0,0)} |\bm{f}(t, \bm{x})|^p \ d\bm{x} \ dt  \right)^{\frac{1}{p}},
    \end{align}}
       where we have used the estimate \eqref{west} for ${\bm w}$ and $N = N(d, s, p, \alpha_1, \alpha_2)$.  Combining $(\ref{epsilonzero})$ and $(\ref{estim12})$, we obtain $(\ref{no2})$ for $R=1$. 

In the event $p_1 = \infty$, repeat the same computations using point (2) of Lemma $\ref{cor}$ to replace the left hand side of \eqref{epsilon-level} by the $L^{\infty}$-norm using the inequality  $\|\mathbb{L} \bm{h}^\epsilon\|_{[L^{\infty}(Q_{1/2} (t_0,0))]^d} \leq  \|\mathbb{L} \bm{h}^\epsilon\|_{[C^{\tau/2s,\tau}(Q_{1/2}(t_0,0))]^d}$.
\end{proof}

   Once we have local estimates that are proved in the previous theorem, what remains is the level set estimate for $\mathbb{L}\bm{u}$. The argument is exactly the same as the one used in \cite{Dong:2023aa}. We include it here for completeness.  
   
   Given $\bm{f}$ a locally integrable vector-valued function on $\mathbb{R}^{d+1}$, its uncentered Hardy-Littlewood maximal function is defined as
       \[
       \mathcal{M}(\bm{f})(t, \bm{x}) = \sup_{Q_{r}(s, \bm{y}) \ni(t, \bm{x})} \fint_{Q_r(s, \bm{y})} |\bm{f}(r, \bm{z})| \ d\bm{z} \ dr
       \]
       and its strong maximal function is defined as  
       \[
       \mathcal{SM}(\bm{f}) (t, \bm{x}) = \sup_{Q_{r_1,r_2}(s, \bm{y})\ni (t, \bm{x})} \fint_{Q_{r_1,r_2}(s, \bm{y})} |\bm{f}(r, \bm{z})| \ d\bm{z} \ dr,
       \] 
where for $r_1, r_2>0$, $Q_{r_1,r_2}(t,\bm{x}) = (t-r_{1}^{2s}, t) \times B_{r_2}(\bm{x}) $. 
      For $1<p<\infty$, the maximal function is a bounded operator from $L^p(\mathbb{R}^d)$ to $L^p(\mathbb{R}^d)$ (see  \cite[Theorem 2.1.6]{grafakos2014classical}).
      
       For $R > 0$, $p \in (1, \infty)$ and $p_1 \in (p, \infty]$ from Proposition \ref{decomposition}, denote the level  superset of $\mathbb{L} \bm{u}$ by 
       \[
       \mathcal{A}(\tau) = \{(t, \bm{x}) \in  (-\infty, T) \times \mathbb{R}^d  : |\mathbb{L} \bm{u}(t, \bm{x}) | > \tau\}
       \]
       and for $\gamma>0$ 
       \[
       \mathcal{B_\gamma}(\tau) = \{ (t, \bm{x}) \in  (-\infty, T) \times \mathbb{R}^d : \gamma^{-1/p} (\mathcal{M} |\bm{f}|^p (t, \bm{x}))^{1/p} + \gamma^{-1/p_1} (\mathcal{SM} |\mathbb{L} \bm{u}|^p (t, \bm{x}))^{1/p} >\tau\}. 
       \]
       Also, we define 
       \[
       \mathcal{C}_R(t, \bm{x}) =  (t- R^{2s}, t+ R^{2s}) \times  B_R(\bm{x}) \quad \text{and} \quad \widehat{\mathcal{C}_R} (t, \bm{x}) =  \mathcal{C}_R(t, \bm{x}) \cap \{t \leq T\}. 
       \]         

       \begin{lemma}
       \label{balls}
           Let $\gamma \in (0,1)$, $s \in (0,1)$, $T \in (0, \infty), R>0, \lambda \geq 0$ and $p \in (1, \infty)$. Suppose Theorem \ref{timedep} holds for this $p$, and $\bm{u} \in \mathbb{H}^{(2s,p);1}_0(\mathbb{R}_T^d)$ satisfies \[
       \partial_t \bm{u} + (-\Delta)^s \bm{u}  + \lambda \bm{u} = \bm{f} \quad \text{in} \  (0,T) \times \mathbb{R}^d.
       \]
       Let $p_1 \in (p, \infty]$ be as obtained in  Proposition \ref{decomposition}. 
       Then, there exists a sufficiently large constant $\kappa>1$ such that for any $(t_0, \bm{x}_0) \in (-\infty, T] \times \mathbb{R}^d $ and any $\tau>0$, if 
       \[
       |\mathcal{C}_{\Tilde{R}} (t_0, \bm{x}_0) \cap \mathcal{A}(\kappa \tau)| \geq \gamma |\mathcal{C}_{\Tilde{R}} (t_0, \bm{x}_0)|,
       \]
       then we have 
       \[
       \widehat{\mathcal{C}_{\Tilde{R}}} (t_0, \bm{x}_0) \subset \mathcal{B}_\gamma(\tau),
       \]
       where $\Tilde{R} = 2^{-1-1/2s} R$. 
       
       \end{lemma}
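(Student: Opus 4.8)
The plan is to establish the contrapositive. Assume $\widehat{\mathcal{C}_{\tilde R}}(t_0,\bm{x}_0)\not\subset\mathcal{B}_\gamma(\tau)$ and fix a ``good point'' $(t_1,\bm{x}_1)\in\widehat{\mathcal{C}_{\tilde R}}(t_0,\bm{x}_0)\setminus\mathcal{B}_\gamma(\tau)$. Unwinding the definition of $\mathcal{B}_\gamma(\tau)$, this forces both
\[
\mathcal{M}|\bm{f}|^p(t_1,\bm{x}_1)\le\gamma\tau^p
\qquad\text{and}\qquad
\mathcal{SM}|\mathbb{L}\bm{u}|^p(t_1,\bm{x}_1)\le\gamma^{p/p_1}\tau^p.
\]
The goal is to deduce $|\mathcal{C}_{\tilde R}(t_0,\bm{x}_0)\cap\mathcal{A}(\kappa\tau)|<\gamma\,|\mathcal{C}_{\tilde R}(t_0,\bm{x}_0)|$ once $\kappa$ is taken large (depending only on $d,s,p,\alpha_1,\alpha_2$), which contradicts the hypothesis.

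First I would fix the geometry. Set $R:=2^{1+1/2s}\tilde R$ and $t_2:=\min\{t_0+\tilde R^{2s},T\}$. Comparing radii one checks that $\mathcal{C}_{\tilde R}(t_0,\bm{x}_0)\cap\{t\le T\}\subset Q_{R/2}(t_2,\bm{x}_0)\subset Q_R(t_2,\bm{x}_0)\subset Q_{2R}(t_2,\bm{x}_0)$, and that $(t_1,\bm{x}_1)$ lies in $Q_{2R}(t_2,\bm{x}_0)$ and in every cylinder of the form $(t_2-R^{2s},t_2)\times B_{2^kR}(\bm{x}_0)$ and $(t_2-(2^{k+1}+1)R^{2s},t_2)\times B_R(\bm{x}_0)$ — each of these has time-length at least $R^{2s}$, while $(t_1,\bm{x}_1)$ sits within $\tilde R^{2s}<R^{2s}$ of $t_2$ in time and within $B_{\tilde R}(\bm{x}_0)$ in space. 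Since $\bm{u}\in\mathbb{H}^{(2s,p);1}_0(\mathbb{R}^d_T)$ solves $\partial_t\bm{u}+(-\Delta)^s\bm{u}+\lambda\bm{u}=\bm{f}$ and Theorem~\ref{timedep} is assumed for this $p$, Proposition~\ref{decomposition} applies at $(t_2,\bm{x}_0)$ with radius $R$ and produces $\bm{u}=\bm{w}+\bm{v}$ on $Q_R(t_2,\bm{x}_0)$ together with \eqref{no1} and \eqref{no2} (estimate \eqref{no3} is not needed here).

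Next I would split, on $Q_{R/2}(t_2,\bm{x}_0)$, the set $\{|\mathbb{L}\bm{u}|>\kappa\tau\}$ into $\{|\mathbb{L}\bm{w}|>\kappa\tau/2\}$ and $\{|\mathbb{L}\bm{v}|>\kappa\tau/2\}$. For $\bm{w}$, Chebyshev at exponent $p$ and \eqref{no1} give, using $\fint_{Q_{2R}(t_2,\bm{x}_0)}|\bm{f}|^p\le\mathcal{M}|\bm{f}|^p(t_1,\bm{x}_1)\le\gamma\tau^p$ and $|Q_R|\asymp|\mathcal{C}_{\tilde R}|$,
\[
|\{|\mathbb{L}\bm{w}|>\kappa\tau/2\}\cap Q_{R/2}(t_2,\bm{x}_0)|\le\Big(\tfrac{2}{\kappa\tau}\Big)^{p}\int_{Q_R(t_2,\bm{x}_0)}|\mathbb{L}\bm{w}|^p\le\frac{C_1}{\kappa^{p}}\,\gamma\,|\mathcal{C}_{\tilde R}(t_0,\bm{x}_0)|.
\]
For $\bm{v}$, assuming first $p_1<\infty$, I would use Chebyshev at exponent $p_1$ and \eqref{no2}: each $|\bm{f}|^p$-average over $(t_2-R^{2s},t_2)\times B_{2^kR}(\bm{x}_0)$ is $\le 2^{2ks}\mathcal{M}|\bm{f}|^p(t_1,\bm{x}_1)$ after enlarging to the square cylinder $Q_{2^kR}(t_2,\bm{x}_0)$, so $\sum_k2^{-2ks}(2^{2ks}\gamma\tau^p)^{1/p}\le C\gamma^{1/p}\tau$ (the surviving exponent $2ks/p'$ is positive); each $|\mathbb{L}\bm{u}|^p$-average over a cylinder in \eqref{no2} is bounded directly by $\mathcal{SM}|\mathbb{L}\bm{u}|^p(t_1,\bm{x}_1)\le\gamma^{p/p_1}\tau^p$, so those series are $\le C\gamma^{1/p_1}\tau$; and $\fint_{Q_{2R}}|\bm{f}|^p\le\gamma\tau^p$. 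As $\gamma<1$ and $p_1>p$ every term is $\le C\gamma^{1/p_1}\tau$, hence $\big(\fint_{Q_{R/2}(t_2,\bm{x}_0)}|\mathbb{L}\bm{v}|^{p_1}\big)^{1/p_1}\le C\gamma^{1/p_1}\tau$, and raising to the $p_1$-th power gives the clean factor $\gamma$, so Chebyshev yields $|\{|\mathbb{L}\bm{v}|>\kappa\tau/2\}\cap Q_{R/2}(t_2,\bm{x}_0)|\le C_2\kappa^{-p_1}\gamma\,|\mathcal{C}_{\tilde R}(t_0,\bm{x}_0)|$. Adding the two bounds, $|\mathcal{C}_{\tilde R}\cap\mathcal{A}(\kappa\tau)|\le(C_1\kappa^{-p}+C_2\kappa^{-p_1})\gamma\,|\mathcal{C}_{\tilde R}|$, and choosing $\kappa$ with $C_1\kappa^{-p}+C_2\kappa^{-p_1}<1$ closes the contradiction. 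In the case $p_1=\infty$ one replaces $\fint_{Q_{R/2}}|\cdot|^{p_1}$ by $\|\cdot\|_{L^\infty(Q_{R/2})}$; then \eqref{no2} yields $\|\mathbb{L}\bm{v}\|_{L^\infty(Q_{R/2}(t_2,\bm{x}_0))}\le C\tau$, so the $\bm{v}$-set is empty for $\kappa>2C$ and only the $\bm{w}$-term remains.

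The substantive input is Proposition~\ref{decomposition}; granting it, the remaining work is purely geometric bookkeeping — the nesting of the parabolic and anisotropic cylinders and the fact that $(t_1,\bm{x}_1)$ belongs to each one, so that the two maximal-function bounds at $(t_1,\bm{x}_1)$ can be inserted, together with the check that the weights $2^{-2ks}$ beat the volume-ratio factors $2^{2ks/p}$ coming from the $\bm{f}$-sums. The main obstacle, such as it is, is this accounting: in particular, using the gain $p_1>p$ correctly so that the Chebyshev estimate for $\bm{v}$ emerges with a factor $\gamma$ rather than $\gamma^{p_1/p}$, which is exactly what lets $C_1\kappa^{-p}+C_2\kappa^{-p_1}$ be driven below $1$ by a single large $\kappa$.
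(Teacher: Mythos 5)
Your proof is essentially correct and follows the same strategy as the paper's: invoke Proposition~\ref{decomposition} to decompose $\bm{u}=\bm{w}+\bm{v}$, use the maximal-function bounds at the ``good point'' to control the averages in \eqref{no1}--\eqref{no2}, and then run Chebyshev on each piece. The contrapositive framing is logically the same as the paper's contradiction argument, and your symmetric split $\{|\mathbb{L}\bm{w}|>\kappa\tau/2\}\cup\{|\mathbb{L}\bm{v}|>\kappa\tau/2\}$ is a perfectly good alternative to the paper's $\{|\mathbb{L}\bm{w}|>\kappa-C_1\}\cup\{|\mathbb{L}\bm{v}|>C_1\}$; the volume-ratio bookkeeping with the weights $2^{-2ks}$ versus $2^{2ks/p}$ is carried out correctly.

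One small but genuine omission: you apply Proposition~\ref{decomposition} at the point $(t_2,\bm{x}_0)$ with $t_2=\min\{t_0+\tilde R^{2s},T\}$, but the proposition requires $(t_2,\bm{x}_0)\in\mathbb{R}^d_T=(0,T)\times\mathbb{R}^d$. When $t_0+\tilde R^{2s}\le 0$ the center $t_2\le 0$ and the proposition does not apply. You need to dispose of this case separately (as the paper does) by noting that then $\mathcal{C}_{\tilde R}(t_0,\bm{x}_0)\subset(-\infty,0)\times\mathbb{R}^d$ where $\bm{u}$, hence $\mathbb{L}\bm{u}$, vanishes by the zero extension, so $\mathcal{C}_{\tilde R}\cap\mathcal{A}(\kappa\tau)=\emptyset$ and the desired inequality $|\mathcal{C}_{\tilde R}\cap\mathcal{A}(\kappa\tau)|<\gamma|\mathcal{C}_{\tilde R}|$ holds trivially, so the contrapositive is vacuously satisfied.

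On the other hand, your handling of the $p_1=\infty$ case is an improvement over the paper's. The paper abandons the $\bm{v},\bm{w}$-decomposition there, applies Chebyshev at exponent $p$ directly to $\mathbb{L}\bm{u}$, and must take $\kappa$ large so that $N^p/\kappa^p<\gamma$; this makes $\kappa$ depend on $\gamma$, which then threatens to undo the subsequent choice ``take $\gamma$ small so $N\gamma^{1-p/p_1}<1/2$'' in the bootstrap of Theorem~\ref{timedep} (when $p_1=\infty$ the absorbed factor $\kappa^p\gamma$ is order one). Your route keeps the decomposition, reads off from \eqref{no2} (with the $L^\infty$ replacement) that $\|\mathbb{L}\bm{v}\|_{L^\infty(Q_{R/2})}\le C\tau$ with $C$ independent of $\gamma$, so $\{|\mathbb{L}\bm{v}|>\kappa\tau/2\}=\emptyset$ once $\kappa>2C$, and then only the $\bm{w}$-term remains; this yields a $\gamma$-independent $\kappa$, which is what the bootstrap actually needs.
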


       \begin{proof}
           Assume $\tau =1$.  Otherwise, divide the equation $ \partial_t \bm{u} + (-\Delta)^s \bm{u}  + \lambda \bm{u} = \bm{f}$ by $\tau$.     
  
         Note that if $t_0 + \Tilde{R}^{2s} <0$, since $\bm{u}$ vanishes for $t<0$, we have that  
         \[
         \mathcal{C}_{\Tilde{R}} (t_0, \bm{x}_0) \cap \mathcal{A}(\kappa ) \subset \left\{ (t, \bm{x}) \in  (-\infty, 0) \times \mathbb{R}^d  : |\mathbb{L} \bm{u}(t, \bm{x}) |>1 \right\} = \emptyset. 
         \]
     Thus, it suffices to consider the case $t_0 + \Tilde{R}^{2s} \geq 0$. We argue by contradiction.  
        Suppose that for any $\kappa >0$ there exist $(t_0, \bm{x}_0) = (t_0(\kappa), \bm{x}_{0}(\kappa))$ such that $|\mathcal{C}_{\Tilde{R}} (t_0, \bm{x}_0) \cap \mathcal{A}(\kappa)| \geq \gamma |\mathcal{C}_{\Tilde{R}} (t_0, \bm{x}_0)|,$ but for  some $(\hat{t}, \hat{\bm{y}}) = (\hat{t}(\kappa), \hat{\bm{y}}({\kappa})) \in \widehat{\mathcal{C}_{\Tilde{R}}} (t_0, \bm{x}_0)$, $(s, \bm{y}) \not \in \mathcal{B}_{\gamma}(1)$; i.e. 
           \[
           \gamma^{-1/p} (\mathcal{M} |\bm{f}|^p (\hat{t}, \hat{\bm{y}}))^{1/p} + \gamma^{-1/p_1} (\mathcal{SM} |\mathbb{L}\bm{u}|^p (\hat{t}, \hat{\bm{y}}))^{\frac{1}{p}} \leq 1. 
           \]
           In particular,
           \begin{align}
           \label{maximal}
           (\mathcal{M} |\bm{f}|^p (\hat{t}, \hat{\bm{y}}))^{1/p} \leq  \gamma^{1/p} \quad \text{and} \quad  (\mathcal{SM} |\mathbb{L}\bm{u}|^p (\hat{t}, \hat{\bm{y}}))^{\frac{1}{p}} \leq \gamma^{1/p_1}, 
           \end{align}
           where we understand that when $p_1 = \infty$, then ${1\over p_1 } = 0.$ 
           Let $t_1 := \min \{t_0+ \Tilde{R}^{2s} , T\}$. By Proposition \eqref{decomposition}, for $S = \min\{0, t_1- R^{2s}\}$,  there exist $\bm{w} \in  \mathbb{H}^{(2s,p); 1}_0((t_1 - R^{2s}, t_1) \times \mathbb{R}^d ) $ and $\bm{v} \in \mathbb{H}^{(2s,p); 1}_0((S,t_1) \times \mathbb{R}^d )$ such that $\bm{u} = \bm{v} + \bm{w}$ in $Q_R(t_1, \bm{x}_0)$ and estimates \eqref{no1} and \eqref{no2} hold. Now since  $(\hat{t}, \hat{\bm{y}})  \in \widehat{\mathcal{C}_{\Tilde{R}}}(t_0, \bm{x}_0)  \subset Q_{R/2}(t_1, \bm{x}_0) \subset Q_{2R}(t_1, \bm{x}_0) $ and $(\hat{t}, \hat{\bm{y}})  \in \widehat{\mathcal{C}_{\Tilde{R}}}(t_0, \bm{x}_0)  \subset (t_1 -(2^{k+1} +1) R^{2s} , t_1) \times  B_R(\bm{x}_0)$  and also $(\hat{t}, \hat{\bm{y}}) \in \widehat{\mathcal{C}_{\Tilde{R}}}(t_0, \bm{x}_0)  \subset (t_1 - R^{2s} , t_1) \times B_{2^kR}(\bm{x}_0)$  for all $k=\mathbb{N}\cup\{0\}, $ from the estimates \eqref{no1} , \eqref{no2} and \eqref{maximal}, we get:
  \[
\left(\fint_{Q_{R}(t_1, \bm{x}_0)} |\mathbb{L}\bm{w}(t, \bm{x})|^p \ d\bm{x} \ dt \right)^{1/p} 
\leq N \gamma^{1/p}
\]
and
\[
\left(\fint_{Q_{R/2}(t_1, \bm{x}_0)} |\mathbb{L} \bm{v}(t, \bm{x})|^{p_1} \ d\bm{x} \ dt \right)^{1/p_1}  \leq N \gamma^{1/p_1}.
\]
If  $p_1 < \infty$,  by Chebyshev's inequality and for a constant $C_1$ to be determined we have 
           \begin{align*}
           |\mathcal{C}_{\Tilde{R}} (t_0, \bm{x}_0) \cap \mathcal{A}(\kappa)| =& \  |\{ (t, \bm{x}) \in \mathcal{C}_{\Tilde{R}} (t_0, \bm{x}_0), t \in (-\infty, T) : | \mathbb{L} \bm{u} (t, \bm{x}) | > \kappa \}|\\
           \leq& \  |\{ (t, \bm{x}) \in Q_{R/2} (t_1, \bm{x}_0) : | \mathbb{L} \bm{u} (t, \bm{x}) | > \kappa\}|\\
           \leq&  \  |\{ (t, \bm{x}) \in Q_{R/2} (t_1, \bm{x}_0) : | \mathbb{L} \bm{w} (t, \bm{x}) | > \kappa- C_1 \}| + |\{ (t, \bm{x}) \in Q_{R/2} (t_1, \bm{x}_0) : | \mathbb{L} \bm{v} (t, \bm{x}) | > C_1 \}|\\
           \leq&  \int_{Q_{R/2}(t_1, \bm{x}_0) } \ (\kappa-C_1)^{-p}  | \mathbb{L} \bm{w} (t, \bm{x}) |^{p} + C_1^{-p_1}  | \mathbb{L} \bm{v} (t, \bm{x}) |^{p_1}  d\bm{x} \ dt   \\
           \leq& \ \frac{N^p \gamma |Q_{R/2}|}{(\kappa - C_1)^p} + \frac{N^{p_1} \gamma |Q_{R/2}|}{C_1^{p_1}} \\
           \leq& \ N_0(d, s) |\mathcal{C}_{\Tilde{R}(t_0, \bm{x}_0)}| \gamma \left(\frac{N^p}{(\kappa - C_1)^p } + \frac{N^p}{C_1^p} \right).
           \end{align*}
  By first taking a sufficiently large $C_1$ such that $N_0(N /C_1)^p < 1/2$,
and then taking a large $\kappa$ so that
$N_0 N^p/ (\kappa- C_1)^p < 1/2$, we obtain 
\[
|\mathcal{C}_{\Tilde{R}} (t_0, \bm{x}_0) \cap \mathcal{A}(\kappa)|< \gamma |\mathcal{C}_{\Tilde{R}} (t_0, \bm{x}_0)|,
\]
which is a contradiction.  

If $p_1 = \infty$ instead, then from \eqref{maximal} we have $ (\mathcal{SM} |\mathbb{L} \bm{u}|^p (\hat{t}, \hat{\bm{y}}))^{\frac{1}{p}} \leq 1$ and by Chebyshev's inequality, 
           \begin{align*}
           |\mathcal{C}_{\Tilde{R}} (t_0, \bm{x}_0) \cap \mathcal{A}(\kappa)| 
           \leq& \ \kappa^{-p}  \int_{Q_{R/2}(t_1, \bm{x}_0)}   | \mathbb{L} \bm{u} (t, \bm{x}) |^{p} d\bm{x}  \ dt \leq  \ \frac{N^p  |\mathcal{C}_{\Tilde{R}(t_0, \bm{x}_0)}| }{\kappa ^p}. 
           \end{align*}
           By taking a large $\kappa$ so that
$ N^p/ \kappa^p < \gamma$, we obtain 
\[
|\mathcal{C}_{\Tilde{R}} (t_0, \bm{x}_0) \cap \mathcal{A}(\kappa)|< \gamma |\mathcal{C}_{\Tilde{R}} (t_0, \bm{x}_0)|,
\]
which is a contradiction again.  
       \end{proof}

       \begin{remark}
       \label{remarkballs}
           By replacing $\mathbb{L} \bm{u}$ with $\lambda \bm{u}$ in the above proof and using \eqref{no1} and \eqref{no3} from Proposition \ref{decomposition} we conclude that Lemma $\ref{balls}$ holds for 
           \[
           \mathcal{A}'(s) = \{(t, \bm{x}) \in  (-\infty, T) \times  \mathbb{R}^d  : |\lambda \bm{u} (t, \bm{x})| > \tau\}
           \]
           and 
            \[
       \mathcal{B}'_\gamma(s) = \{ (t, \bm{x}) \in  (-\infty, T) \times \mathbb{R}^d  : \gamma^{-1/p} (\mathcal{M} |\bm{f}|^p (t, \bm{x}))^{1/p} + \gamma^{-1/p_1} (\mathcal{SM} |\lambda \bm{u}|^p (t, \bm{x}))^{\frac{1}{p}} >\tau\}.  
       \]
       \end{remark}

       \begin{proof}[Proof of Theorem \ref{timedep}]
We first assume that  $\bm{u} \in [C_{0}^\infty(\mathbb{R}^d_{T})]^d$ with $\bm{u}(0, \cdot) =0$ solves \eqref{eqtimedep}. 

We first establish the result for $p \in [2,\infty)$ via a bootstrap argument by starting from $p=2$ and use an iterative argument to successively increase the exponent to any $p$.  

Now we assume that the theorem holds for some
$p_0 \in [2,\infty)$, and we prove the estimate $(\ref{est1})$ for $ p\in (p_0, p_1)$, where $p_1$ comes from Proposition \ref{decomposition}. Note that after change of variable 
\[
\|\mathbb{L} \bm{u}\|^p_{[L^p(\mathbb{R}_T^d)]^d} = p \int_0^\infty |\mathcal{A}(\tau) | \tau^{p-1} \ d\tau = p \kappa^p \int_0^\infty |\mathcal{A}(\kappa \tau) \tau^{p-1} \ d\tau. 
\]
Moreover, from Lemma \ref{balls} and the Lemma of Crawling of ink spot (Lemma A.5 in \cite{Dong:2023aa}), we get
\[
|\mathcal{A}(\kappa \tau)| \leq N(d) \gamma |\mathcal{B}_\gamma (\tau)|
\]
for all $\tau \in (0, \infty)$, where $\kappa$ is from Lemma \ref{balls}. Thus, by the Hardy-Littlewood theorem for (strong) maximal functions,
\begin{align*}
\|\mathbb{L} \bm{u}\|^p_{[L^p(\mathbb{R}_T^d)]^d} &\leq N p \kappa^p \gamma \int_0^\infty |\mathcal{B}_\gamma(\tau)| \tau^{p-1} \ d\tau\\
&\leq  N \gamma \int_0^\infty \left| \left\{ (t, \bm{x}) \in  (-\infty, T) \times \mathbb{R}^d : \gamma^{-1/p_1} (\mathcal{SM} |\mathbb{L} \bm{u}|^{p_0} (t, \bm{x}))^{1/p_0} > \tau/2 \right\} \right| \ \tau^{p-1} \ d\tau \\
&+  N \gamma \int_0^\infty \left| \left\{ (t, \bm{x}) \in (-\infty, T) \times \mathbb{R}^d: \gamma^{-1/p_0} (\mathcal{M} |\bm{f}|^{p_0} (t, \bm{x}))^{1/p_0} > \tau/2 \right\} \right| \ \tau^{p-1} \ d\tau \\
&\leq  N \gamma^{1-p/p_1} \|\mathbb{L} \bm{u} \|^p_{[L^p(\mathbb{R}^d_{T})]^d} + N \gamma^{1-p/p_0} \| \bm{f} \|^p_{[L^p(\mathbb{R}^d_{T})]^d}.  
\end{align*}
Similarly, using Remark $\ref{remarkballs}$, we can conclude that 
\[
\|\lambda \bm{u}\|^p_{[L^p(\mathbb{R}^d_{T})]^d} \leq N \gamma^{1-p/p_1} \|\lambda \bm{u}\|^p_{[L^p(\mathbb{R}^d_{T})]^d} + N \gamma^{1- p/p_0} \| \bm{f}\|^p_{[L^p(\mathbb{R}^d_{T})]^d}.  
\]
Note that since $\bm{u}\in [C_0^{\infty}(\mathbb{R}^{d}_{T})]^{d}$, then both $\mathbb{L} \bm{u}$ and $\lambda \bm{u}$ are in  $[L^p(\mathbb{R}^d_{T})]^d$.  By the fact that $p < p_1$, we take a sufficiently small $\gamma \in (0,1)$ so that
\[
N \gamma^{1-p/p_1}< 1/2,
\]
and therefore 
\[
\|\mathbb{L} \bm{u}\|^p_{[L^p(\mathbb{R}_T^d)]^d} +\|\lambda \bm{u}\|^p_{[L^p(\mathbb{R}^d_{T})]^d}\leq  N \| \bm{f}\|^p_{[L^p(\mathbb{R}^d_{T})]^d}. 
\]
But since $\bm{u}$ solves \eqref{eqtimedep}, we have $\partial_{t}\bm{u} = \bm{f} - (-\Delta)^s \bm{u}-\lambda \bm{u}$,  and thus $\| \partial_{t}\bm{u}\|_{[L^p(\mathbb{R}_T^d)]^d}$ can now be bounded be bounded by  $\| \bm{f}\|^p_{[L^p(\mathbb{R}^d_{T})]^d}$, 
which then implies $(\ref{esttimedep})$ for $p \in (p_0, p_1)$.   
We repeat this procedure. Recall that $p_1- p$ depends only on $d,s$. Thus in finite steps, we get a $p_0$ which is larger than $d/2+1$, so that
$p_1 =\infty$. Therefore, the theorem is proved for any $p \in [2,\infty)$ for smooth functions. 

We next prove \eqref{esttimedep} for $\bm{u}\in \mathbb{H}^{(2s, p); 1}_{0}(\mathbb{R}^{d}_{T})$ that solves \eqref{eqtimedep}. 
Once we have established \eqref{esttimedep}  for smooth vector functions, then we may apply Remark \ref{Lbdd-time}, a consequence of Corollary \ref{Lbdd}, to conclude that 
\[
\left\|\mathbb{L} \bm{u} \right\|_{[L^p (\mathbb{R}_{T}^d )]^d}  \leq N \left\|(-\Delta)^s \bm{u}\right\|_{[L^p (\mathbb{R}_{T}^d)]^d}, \quad \text{for any ${\bm  u}\in [C_{0}^{\infty}(\mathbb{R}^{d}_{T})]^d$ with $\bm{u}(0, \cdot) = 0$}. 
\]
Since $[C_{0}^{\infty}(\mathbb{R}^{d}_{T})]^d$ is dense in $\mathbb{H}^{(2s, p); 1}_{0}(\mathbb{R}^{d}_{T})$,  the above estimate implies $\mathbb{L}$ has a unique continuous extension from $\mathbb{H}^{(2s, p); 1}_{0}(\mathbb{R}^{d}_{T})$ to $[L^p (\mathbb{R}_{T}^d)]^d$.  Now take ${\bm  u}_{n} \in [C_{0}^{\infty}(\mathbb{R}^{d}_{T})]^d$ with $\bm{u}(0, \cdot) = 0$ such that 
\[
\|\bm{u} - \bm{u}_{n} \|_{\mathbb{H}^{(2s, p); 1}_{0}(\mathbb{R}^{d}_{T})} \to 0, \quad \text{as $n\to \infty.$ }
\]
Then ${\bm u_{n}}$ solves \eqref{eqtimedep} with the right hand side ${\bm g}_n$ that converges to ${\bm g}$ in $[L^{p}(\mathbb{R}^{d}_{T})]^{p}$. The estimate for $\bm{u}$ now follows from the estimate for the smooth case and the continuous extension of $\mathbb{L} $. 

For $p \in (1,2)$, we use a duality argument. Again, we assume that $\bm{u} \in [C^\infty_0(\mathbb{R}^d_{T})]^d $ with $ \bm{u}(0,\cdot) =0$ solves \eqref{eqtimedep} and prove \eqref{esttimedep}. For $p' = p/(p-1)$ and $\bm{\phi} \in[ L^{p'}(\mathbb{R}^d_{T})]^d$, there exists $\bm{U} \in \mathbb{H}^{2s, p'}_0( (-T,0) \times \mathbb{R}^d)$ satisfying 
\[
\partial_t \bm{U}(t, \bm{x}) + (-\Delta)^s \bm{U}(t, \bm{x}) + \lambda \bm{U}(t, \bm{x}) = \bm{\phi}(-t, \bm{x}) \quad \text{in} \  (-T,0) \times \mathbb{R}^d
\]
and
\begin{align*}
\|\partial_t  \bm{U}\|_{[L^{p'}((- T,0) \times \mathbb{R}^d ) ]^d} +  \|\mathbb{L} \bm{U}\|_{[L^{p'}((- T,0) \times \mathbb{R}^d ) ]^d} &+ \lambda \| \bm{U}\|_{[L^{p'}((- T,0) \times \mathbb{R}^d ) ]^d} \\  &\leq N \|\bm{\phi}(-t, \bm{x})\|_{[L^{p'}((- T,0) \times \mathbb{R}^d ) ]^d}\\
&\leq N \|\bm{\phi}\|_{[L^{p'}((0,T) \times \mathbb{R}^d)]^d} .
\end{align*}
It follows that 
\begin{align*}
&\int_0^T \int_{\mathbb{R}^d} \bm{\phi}(t, \bm{x}) \cdot  \mathbb{L} \bm{u}(t, \bm{x}) \ d\bm{x} \ dt\\
&= \int_{-T}^0 \int_{\mathbb{R}^d} \bm{\phi}(-t, \bm{x}) \cdot \mathbb{L} \bm{u}(-t, \bm{x}) \ d\bm{x} \ dt\\
&= \int_{-T}^0 \int_{\mathbb{R}^d}[ \partial_t \bm{U}(t, \bm{x}) + (-\Delta)^s \bm{U}(t, \bm{x}) + \lambda \bm{U}(t, \bm{x}) ] \cdot \mathbb{L} \bm{u}(-t, \bm{x}) \ d\bm{x} \ dt.
\end{align*}
If $\bm{U}$ is smooth, we can apply Plancherel's theorem:
\begin{align*}
 &\int_{-T}^0 \int_{\mathbb{R}^d}[ \partial_t \bm{U}(t, \bm{x}) + (-\Delta)^s \bm{U}(t, \bm{x}) + \lambda \bm{U}(t, \bm{x}) ] \cdot \mathbb{L} \bm{u}(-t, \bm{x}) \ d\bm{x} \ dt\\
 &=  \int_{-T}^0 \int_{\mathbb{R}^d}[ \partial_t \widehat{\bm{U}}(t, \bm{\xi}) - |\bm{\xi}|^{2s}  \widehat{\bm{U}}(t, \bm{\xi}) + \lambda \widehat{\bm{U}}(t, \bm{\xi}) ] \cdot \widehat{\mathbb{L} \bm{u}}(-t, \bm{\xi}) \ d\bm{\xi} \ dt. 
\end{align*}
Moreover, using integration by parts on the term $\partial_t \widehat{\bm{U}}(t, \bm{\xi}) \cdot \widehat{\mathbb{L} \bm{u}}(-t, \bm{\xi})$ and the zero initial conditions of $\bm{u}$ and $\bm{U}$, we have 
\begin{align*}
 & \int_{-T}^0 \int_{\mathbb{R}^d}[ \partial_t \widehat{\bm{U}}(t, \bm{\xi}) -|\bm{\xi}|^{2s}  \widehat{\bm{U}}(t, \bm{\xi}) + \lambda \widehat{\bm{U}}(t, \bm{\xi}) ] \cdot \widehat{\mathbb{L} \bm{u}}(-t, \bm{\xi}) \ d\bm{\xi} \ dt \\
 &=  \int_{-T}^0 \int_{\mathbb{R}^d}  \widehat{\bm{U}}(t, \bm{\xi})  \cdot \left[\partial_t \widehat{\mathbb{L} \bm{u}}(-t, \bm{\xi})    +(\lambda - |\bm{\xi}|^{2s})  \widehat{\mathbb{L} \bm{u}}(-t, \bm{\xi}) \right] \ d\bm{\xi} \ dt. 
\end{align*}
By applying Plancherel's again, we get 
\begin{align*}
 &  \int_{-T}^0 \int_{\mathbb{R}^d}  \widehat{\bm{U}}(t, \bm{\xi})  \cdot \left[\partial_t \widehat{\mathbb{L} \bm{u}}(-t, \bm{\xi})    +(\lambda - |\bm{\xi}|^{2s}) \widehat{\mathbb{L} \bm{u}}(-t, \bm{\xi}) \right] \ d\bm{\xi} \ dt\\
 &= \int_{-T}^0 \int_{\mathbb{R}^d}  \bm{U}(t, \bm{x}) \cdot \left[ \partial_t \mathbb{L} \bm{u}(-t, \bm{x})     +(\lambda + (-\Delta)^s  ) \mathbb{L} \bm{u}(-t, \bm{x})  \right] \ d\bm{x} \ dt\\
 &= \int_{-T}^0 \int_{\mathbb{R}^d}   \left[\partial_t \bm{u}(-t, \bm{x})     +(\lambda + (-\Delta)^s  )\bm{u}(-t, \bm{x})  \right] \cdot  \mathbb{L}^*  \bm{U}(t, \bm{x})  \ d\bm{x} \ dt\\
 &= \int_0^T \int_{\mathbb{R}^d}   \left[\partial_t \bm{u}(t, \bm{x})     +(\lambda + (-\Delta)^s  )\bm{u}(t, \bm{x})  \right]  \cdot  \mathbb{L}^*  \bm{U}(-t, \bm{x})  \ d\bm{x} \ dt\\
 &= \int_0^T \int_{\mathbb{R}^d}  \bm{f}(t, \bm{x}) \cdot  \mathbb{L}^*  \bm{U}(-t, \bm{x})  \ d\bm{x} \ dt\\
 &\leq N \|\bm{f}\|_{[L^{p}(\mathbb{R}_T^d )]^d}\|\bm{\phi}\|_{[L^{p'}(\mathbb{R}_T^d )]^d}. 
\end{align*}
We thus conclude that 
\begin{equation} \label{dualest}
    \int_0^T \int_{\mathbb{R}^d} \bm{\phi}(t, \bm{x}) \cdot  \mathbb{L} \bm{u}(t, \bm{x}) \ d\bm{x} \ dt\\\leq N \|\bm{f}\|_{[L^{p}(\mathbb{R}_T^d )]^d}\|\bm{\phi}\|_{[L^{p'}(\mathbb{R}_T^d )]^d}
\end{equation}
In the case in which $\bm{U}$ is not smooth, we consider a sequence $\{\bm{U}_k \} \subset [C^\infty_0( [-T,0] \times \mathbb{R}^d)]^d$ with $\bm{U}_k (-T, \cdot)=0$ such that 
\[
\bm{U}_k \to \bm{U} \quad \text{in} \ \mathbb{H}^{2s,p'}_0 ((-T,0) \times \mathbb{R}^d). 
\]
Let $\bm{\phi}_{k}(-t, \bm{x}) = \partial_t \bm{U}_k + (-\Delta)^s \bm{U}_k + \lambda \bm{U}_k. $ The $\bm{\phi}_{k} \to \bm{\phi}$ in $L^{p'}((0, T)\times \mathbb{R}^d)$.   Applying the previous estimates to each $k$, we have 
\[
\begin{split}
&\int_0^T \int_{\mathbb{R}^d} \bm{\phi}(t, \bm{x}) \cdot  \mathbb{L} \bm{u}(t, \bm{x}) \ d\bm{x} \ dt\\
&= \int_0^T \int_{\mathbb{R}^d} \bm{\phi}_k(t, \bm{x}) \cdot  \mathbb{L} \bm{u}(t, \bm{x}) \ d\bm{x} \ dt + \int_0^T \int_{\mathbb{R}^d} \left(\bm{\phi}(t, \bm{x})-\bm{\phi}_k(t, \bm{x})\right) \cdot  \mathbb{L} \bm{u}(t, \bm{x}) \ d\bm{x} \ dt\\
&\leq N \|\bm{f}\|_{[L^{p}(\mathbb{R}_T^d )]^d}\|\bm{\phi}_k\|_{[L^{p'}(\mathbb{R}_T^d )]^d} + \|\phi_k - \phi\|_{L^{p'}(\mathbb{R}^{d}_{T})}\|\lambda \bm{u} + \mathbb{L} \bm{u}\|_{L^{p}(\mathbb{R}^{d}_{T})}
\end{split}
\]
Letting $k\to \infty,$ \eqref{dualest} holds now in general.  
By replacing $\mathbb{L} \bm{u}$ with $\lambda \bm{u}$ in the previous computation, we analogously obtain
\begin{equation} \label{dualestlambda}
    \int_0^T \int_{\mathbb{R}^d} \bm{\phi}(t, \bm{x}) \cdot \lambda \bm{u} (t, \bm{x}) \ d\bm{x} \ dt\\\leq N \|\bm{f}\|_{[L^{p}(\mathbb{R}_T^d )]^d}\|\bm{\phi}\|_{[L^{p'}(\mathbb{R}_T^d )]^d}
\end{equation}
Now dividing both sides of \eqref{dualest} and \eqref{dualestlambda} by $\|\bm{\phi}\|_{[L^{p'}(\mathbb{R}_T^d )]^d}$ and taking the supremum over all nonzero $\bm{\phi}\in [L^{p'}(\mathbb{R}_T^d )]^d$, we obtain by duality that 
\begin{equation} \label{Ludual}
\|\mathbb{L} \bm{u}\|_{[L^{p}(\mathbb{R}_T^d )]^d} +\lambda \|\bm{u}\|_{[L^{p}(\mathbb{R}_T^d )]^d} \leq N \|\bm{f}\|_{[L^{p}(\mathbb{R}_T^d )]^d}.
\end{equation}
Lastly, since $\bm{u}$ solves \eqref{eqtimedep}, $\partial_t \bm{u} = \bm{f} - \lambda \bm{u} - (-\Delta)^s  \bm{u}$, which simply gives
\begin{equation} \label{partialudual}
 \|\partial_t \bm{u}\|_{[L^{p}(\mathbb{R}_T^d )]^d} \leq N \|\bm{f}\|_{[L^{p}(\mathbb{R}_T^d )]^d}
\end{equation}
By combining \eqref{Ludual} and \eqref{partialudual}, we obtain \eqref{esttimedep}.
\end{proof}
\section*{Acknowledgment}
The authors thank Prof. Hongjie Dong for bringing the manuscript \cite{Dong:2023aa} to their attention, which served as the foundation for this work.
The authors gratefully acknowledge funding from the US National Science Foundation via grants NSF-DMS 2206252 and 2509059. 
\bibliographystyle{plain}

 \bibliography{pj2.bib}

\appendix 
\section{Proof of Proposition \ref{property1}}

\begin{proof}
Fix an arbitrary bounded open set $\Omega \subset \mathbb{R}^d$. We prove existence of the integral and continuity of the map $\bm{x} \mapsto \mathbb{L} \bm{u} (\bm{x})$ on $\Omega$.

By definition $\bm{u} \in[L^1(\mathbb{R}^d,\psi)]^d$ if and only if $\psi \bm{u} \in [L^1(\mathbb{R}^d)]^d$. 
Hence, as $N \to \infty$, we have $\|\psi \bm{u} - \psi \bm{u} \chi_{[-N,N]} \|_{[L^1(\mathbb{R}^d)]^d} \to 0$. This occurs if and only if $\|\bm{u} - \bm{u} \chi_{B_N}\|_{[L^1(\mathbb{R}^d, \psi)]^d} \to 0$ as $N \to \infty$. 

Now we convolve $\bm{u}_N := \bm{u} \chi_{B_N}$ with the standard mollifier $\eta_\delta$, where $\eta_\delta(\bm{x}) = \delta^{-d} \eta(\bm{x}/\delta)$ for a smooth, nonnegative,  $\eta\in C_c^{\infty}(B_{1}(\bm{ 0}))$ satisfying 
$\int_{\mathbb{R}^d} \eta = 1$. 
Set
\[
\bm{u}_{N,\delta} := \eta_\delta * \bm{u}_N.
\]
Then $\bm{u}_{N,\delta} \in [C_c^\infty(\mathbb{R}^d)]^d$ (supp $\bm{u}_{N,\delta} \subset B_{N+\delta}$). Moreover, 
\[
\|\bm{u}_{N,\delta} - \bm{u}\|_{[L^1(\mathbb{R}^d, \psi)]^d} \leq \|\bm{u}_{N,\delta} - \bm{u}_N\|_{[L^1(\mathbb{R}^d, \psi )]^d} + \| \bm{u}_N - \bm{u}\|_{[L^1(\mathbb{R}^d, \psi )]^d} 
\]
The second term on the right hand side goes to 0 as $N \to \infty$ by construction. Let us consider the first term. For $\delta\leq 1$, we have 
\begin{align*}
\|\bm{u}_{N,\delta} - \bm{u}_N\|_{[L^1(\mathbb{R}^d, \psi )]^d}  &= \int_{\mathbb{R}^d} \psi(\bm{x}) |\bm{u}_{N,\delta}(\bm{x}) - \bm{u}_N(\bm{x})| \ d\bm{x}\\
&= \int_{B_{N+1}} \psi(\bm{x}) |\bm{u}_{N,\delta}(\bm{x}) - \bm{u}_N(\bm{x})| \ d\bm{x}\\
&\leq \|\bm{u}_{N,\delta} - \bm{u}_N\|_{[L^1(B_{N+1})]^d} \to 0 \quad \text{as } \delta \to 0 \ \text{for all } N \in \mathbb{N},
\end{align*}
where the convergence of the last term follows from the standard approximation property of the mollification, given $\bm{u}_N \in [L^1(B_{N+1})]^d$ for all $N \in \mathbb{N}$. 
We conclude that   
$\|\bm{u}_{N, \delta} - \bm{u}\|_{[L^1(\mathbb{R}^d, \psi)]^d} \to 0$ as $N \to \infty$ 
and $\delta \to 0$.  
Choose $\tilde{N}$, positive integer large enough that $\Omega\subset B_{\tilde{N}}$. Then for any $N \geq 2\tilde{N}$, the set $\overline{\Omega} + B_{1}(0) $ is in the region where  $\bm{u}_{N} = \bm{u}$ and that $\bm{u}_{N, \delta} = \bm{u}\ast\eta_{\delta}$
\[
\bm{u}_{N,\delta} \to \bm{u} \  \text{uniformly in } \overline\Omega \ \text{as } \delta \to 0 \ \text{and} \ N \to \infty.
\]
Moreover, $\{\bm{u}_{N,\delta}\}_{N, \delta}$ is uniformly bounded in $[C^{0, 2s+\epsilon}(\Omega)]^d $ if $s\leq 1/2$ or  $[C^{1, 2s+\epsilon-1}(\Omega)]^d$ if $s\in (1/2, 1)$. Indeed,  
\begin{align*}
\|\bm{u}_{N,\delta}\|_{[C^{0, 2s+\epsilon}(\Omega)]^d} &= \|\bm{u}_{N,\delta}\|_{[C^{0}(\Omega)]^d} + \sup\left\{\frac{|\bm{u}_{N,\delta}(\bm{x}) - \bm{u}_{N,\delta}(\bm{y})|}{|\bm{x} - \bm{y}|^{2s+\epsilon}}: \bm{x}\not = \bm{y}, \bm{x}, \bm{y}\in \Omega \right\}\\
&\leq \|\bm{u}_N\|_{[L^\infty(\mathbb{R}^d)]^d} \|\eta_\delta\|_{[L^1(\mathbb{R}^d)]^d} + \sup_{\bm{x}\not = \bm{y}} \frac{  \int_{\mathbb{R}^d }|\bm{u}_{N}(\bm{x}- \bm{z} ) - \bm{u}_{N}(\bm{y}-\bm{z})| \eta_\delta(\bm{z}) \ d\bm{z}  }{|\bm{x} - \bm{y}|^{2s+\epsilon}}\\
&\leq \|\bm{u}\|_{[L^\infty(\mathbb{R}^d)]^d}  +  [\bm{u}]_{[C^{0,2s+\epsilon}(\mathbb{R}^d)]^d} = \|\bm{u}\|_{[C^{0, 2s+\epsilon}(\mathbb{R}^d)]^d}
\end{align*}
Similarly, $\|\bm{u}_{N,\delta}\|_{[C^{1, 2s+\epsilon-1}(\mathbb{R}^d)]^d} \leq \|\bm{u}\|_{[C^{1, 2s+\epsilon-1}(\mathbb{R}^d)]^d}$.

Now, for each fixed $\delta, N$ and $\bm{x} \in \Omega$ the integral
\[
I_{N,\delta}(\bm{x}) := \int_{\mathbb{R}^d} \left( \frac{\bm{y} \otimes \bm{y}}{|\bm{y}|^2} \right) \left( \bm{u}_{N,\delta}(\bm{x} + \bm{y}) - \bm{u}_{N,\delta}(\bm{x}) - D[\bm{u}_{N,\delta}](\bm{x}) \bm{y} \chi^{(s)}(\bm{y}) \right) K(\bm{y}) \ d\bm{y}
\]
is absolutely convergent and agrees with $\mathbb{L} \bm{u}_{N,\delta} (\bm{x})$ because $\bm{u}_{N,\delta}$ is smooth and compactly supported. We next prove that $I_{N,\delta}(\bm{x})$ converges to the same integral for $\bm{u}$, say $I(\bm{x})$, for all $\bm{x} \in \Omega$ and that this integral is finite.

To that end, we bound the integrand  $\left( \frac{\bm{y} \otimes \bm{y}}{|\bm{y}|^2} \right) \left( \bm{u}_{N,\delta}(\bm{x} + \bm{y}) - \bm{u}_{N,\delta}(\bm{x})  - D[\bm{u}_{N,\delta}](\bm{x}) \bm{y} \chi^{(s)}(\bm{y}) \right) K(\bm{y})$ by an integrable function $\bm{h}(\bm{y})$ uniformly in $ \bm{x} \in \Omega$, for all $N \in \mathbb{N}$ and $\delta>0$. 
We treat the two regions $\{\bm{y} \leq 1\}$ and $\{\bm{y}>1\}$ separately. 
 
 {\bf The case $|\bm{y}|\leq 1$:}   We further split the proof depending on the $s$ value. 

If $s\in(0,1/2)$, then by the H\"older regularity,
\[
|\bm{u}_{N,\delta}(\bm{x}+\bm{y})-\bm{u}_{N,\delta}(\bm{x})|\leq C|\bm{y}|^{2s+\epsilon}.
\]
Hence
\[
\left|\left(\frac{\bm{y}\otimes\bm{y}}{|\bm{y}|^2}\right)\left(\bm{u}_{N,\delta}(\bm{x}+\bm{y})-\bm{u}_{N,\delta}(\bm{x})\right)K(\bm{y})\right|
\leq C \alpha_2 |\bm{y}|^{2s+\epsilon}|\bm{y}|^{-d-2s}
= C|\bm{y}|^{\epsilon-d}.
\]
Since $\epsilon>0$, the function $|\bm{y}|^{\epsilon-d}$ is integrable near $0$. 
If $s\in[1/2,1)$: then since $\chi^{(s)}(\bm{y}) = 1$ for $|\bm{y}|\leq 1$, by Taylor’s expansion, we have
\[
\bm{u}_{N,\delta}(\bm{x}+\bm{y}) - \bm{u}_{N,\delta}(\bm{x}) - D[\bm{u}_{N,\delta}](\bm{x})\bm{y} \chi^{(s)}(\bm{y})
=[\nabla \bm{u}_{N,\delta}(\bm{x}) - D[\bm{u}_{N,\delta}](\bm{x}) \chi^{(s)}(\bm{y})] \cdot \bm{y}  + \mathcal{O}(|\bm{y}|^{2s+\epsilon}),
\]
uniformly for $\bm{x} \in \Omega$. 
Recall now that $D[\bm{u}](\bm{x})$ denotes the symmetric part of $\nabla\bm{u}(\bm{x})$,
\[
D[\bm{u}](\bm{x}) =\frac{1}{2} \left(\nabla\bm{u}(\bm{x})+\nabla\bm{u}(\bm{x})^T\right),
\]
so that
\[
\nabla \bm{u}(\bm{x}) -D \bm{u}(\bm{x}) =\frac{1}{2} \left(\nabla \bm{u}(\bm{x})- \nabla\bm{u}(\bm{x})^T\right)
\]
is skew-symmetric.  
 Therefore 
\[
\left(\frac{\bm{y} \otimes\bm{y}}{|\bm{y}|^2}\right)\left(\nabla\bm{u}_{N,\delta}(\bm{x})-D[\bm{u}_{N,\delta}](\bm{x})\right)\bm{y}
\]
vanishes identically for every $\bm{y}$. Hence
\[
\left|\left(\frac{\bm{y}\otimes\bm{y}}{|\bm{y}|^2}\right)
\left(\bm{u}_{N,\delta}(\bm{x}+\bm{y})-\bm{u}_{N,\delta}(\bm{x})-D[\bm{u}_{N,\delta}](\bm{x})\bm{y} \right)K(\bm{y})\right|
\leq C|\bm{y}|^{2s+\epsilon} |\bm{y}|^{-d-2s}
= C |\bm{y}|^{\epsilon-d},
\]
which is integrable near the origin because $\epsilon>0$. 

Thus the integral over $\{|\bm{y}|\leq1\}$ is absolutely convergent and uniformly bounded for $\bm{x} \in \Omega$.

{\bf The case $|\bm{y}|>1$:}  
For $|\bm{y}|>1$, we have the following estimate:   
\begin{align*}
&\left|\left( \frac{\bm{y} \otimes \bm{y}}{|\bm{y}|^2} \right) \left( \bm{u}_{N,\delta}(\bm{x} + \bm{y}) - \bm{u}_{N,\delta}(\bm{x})  - D[\bm{u}_{N,\delta}](\bm{x}) \bm{y} \chi^{(s)}(\bm{y}) \right) K(\bm{y})\right|\\
&\leq \alpha_2 \left[|\bm{u}_{N,\delta}(\bm{x} + \bm{y})| + |\bm{u}_{N,\delta}(\bm{x})|  +| D[\bm{u}_{N,\delta}](\bm{x}) \bm{y} \chi^{(s)}(\bm{y})|\right] |\bm{y}|^{-d-2s}\\
&\leq \alpha_2 C_d  \frac{|\bm{u}_{N,\delta} (\bm{x}+ \bm{y})|}{ 1+ |\bm{x}+ \bm{y}|^{d+2s}} + \alpha_2 \frac{|\bm{u}_{N,\delta} (\bm{x})|}{|\bm{y}|^{d+2s}} + \alpha_2 \frac{| D[\bm{u}_{N,\delta}](\bm{x})|}{|\bm{y}|^{d+2s-1}} \chi^{(s)}(\bm{y}). 
\end{align*}
Since $\bm{u}_{N,\delta} \to \bm{u}$ in $[L^1(\mathbb{R}^d, \psi)]^d$, we get that the first term on the right hand side (up to a subsequence) is bounded by some $\bm{g} \in [L^1(\mathbb{R}^d, \psi)]^d$. Indeed, $\bm{u}_{N,\delta} \to \bm{u}$ in $[L^1(\mathbb{R}^d, \psi)]^d$ if and only if $\psi \bm{u}_{N,\delta} \to \psi \bm{u}$ in $[L^1(\mathbb{R}^d)]^d$. By \cite[Theorem 4.9]{Brezis2010FunctionalAS}, there exists a function $\bm{f} \in [L^1(\mathbb{R}^d)]^d$ and a subsequence $\{\psi \bm{u}_{N_k,\delta_k}\}_{k}$ so that $|\psi \bm{u}_{N_k,\delta_k}| \leq \bm{f}$ a.e, $\psi \bm{u}_{N_k,\delta_k} \to \psi \bm{u}$ as $(N_k, \delta_k) \to (\infty, 0)$. Set $\bm{g}(\bm{x}) = \frac{\bm{f}(\bm{x})}{\psi(\bm{x})}$. Then $\bm{g} \in [L^1(\mathbb{R}^d, \psi)]^d$ and $| \bm{u}_{N_k,\delta_k}| \leq \bm{g}$ a.e. It follows that 
\[
\frac{|\bm{u}_{N_k,\delta_k} (\bm{x}+ \bm{y})|}{ 1+ |\bm{x}+ \bm{y}|^{d+2s}} \leq \frac{\bm{g} (\bm{x}+ \bm{y})}{ 1+ |\bm{x}+ \bm{y}|^{d+2s}} \in [L^1(\mathbb{R}^d)]^d
\]
Using the uniform bounds for the $\{\bm{u}_{N,\delta}\}_{N,\delta}'s$, we can bound the remaining terms uniformly by $C \frac{1}{|\bm{y}|^{d+2s}}$ and $C \frac{1}{|\bm{y}|^{d+2s-1}} \chi^{(s)}(\bm{y})$, which are integrable away from the origin and uniformly bounded in $\bm{x} \in \Omega$.

We now apply the Dominated Convergence Theorem. This yields
\[
 \lim_{N\to\infty , \delta \to 0} I_{N,\delta}(\bm{x}) = \int_{\mathbb{R}^d} \left( \frac{\bm{y} \otimes \bm{y}}{|\bm{y}|^2} \right) \left( \bm{u}(\bm{x} + \bm{y}) - \bm{u}(\bm{x}) - D[\bm{u}](\bm{x}) \bm{y} \chi^{(s)}(\bm{y}) \right) K(\bm{y}) \ d\bm{y}.
\]
That is, 
\[
\lim_{N\to\infty , \delta \to 0}  -\mathbb{L}\bm{u}_{N, \delta} = \int_{\mathbb{R}^d} \left( \frac{\bm{y} \otimes \bm{y}}{|\bm{y}|^2} \right) \left( \bm{u}(\bm{x} + \bm{y}) - \bm{u}(\bm{x}) - D[\bm{u}](\bm{x}) \bm{y} \chi^{(s)}(\bm{y}) \right) K(\bm{y}) \ d\bm{y}.
\]
On the other hand, for any $\bm{v}\in \mathcal[{S}]^{d}$, 
\[
\lim_{N\to\infty , \delta \to 0}\langle\mathbb{L}\bm{u}_{N, \delta},  \bm{v}\rangle = \lim_{N\to\infty , \delta \to 0}\langle \bm{u}_{N, \delta}, \mathbb{L}^\ast\bm{v}\rangle = \langle \bm{u}, \mathbb{L}^\ast\bm{v}\rangle = \langle\mathbb{L}\bm{u}, \bm{v} \rangle
\]
and so $\mathbb{L}\bm{u}_{N, \delta}$ to $\mathbb{L}\bm{u}$ in the sense of distributions. Since the limits must be the same, we have that 
\[
-\mathbb{L}\bm{u} = \int_{\mathbb{R}^d} \left( \frac{\bm{y} \otimes \bm{y}}{|\bm{y}|^2} \right) \left( \bm{u}(\bm{x} + \bm{y}) - \bm{u}(\bm{x}) - D[\bm{u}](\bm{x}) \bm{y} \chi^{(s)}(\bm{y}) \right) K(\bm{y}) \ d\bm{y}.
\]

We are left to prove continuity of $\mathbb{L}$. 
Let $\bm{x}_k \to \bm{x}$ with $\bm{x}_k,\bm{x} \in \Omega$. We need to show $I(\bm{x}_k) \to I(\bm{x})$ as $k \to \infty$.  
To apply Dominated Convergence Theorem to $I(\bm{x}_k)$, we have to bound 
\[
\left( \frac{\bm{y} \otimes \bm{y}}{|\bm{y}|^2} \right) \left( \bm{u}(\bm{x}_k + \bm{y}) - \bm{u}(\bm{x}_k)  - D[\bm{u}](\bm{x}_k) \bm{y} \chi^{(s)}(\bm{y}) \right) K(\bm{y})
\] by an integrable function $\bm{h}(\bm{y})$ uniformly in $ \Omega$, for all $k \in \mathbb{N}$. 

For $|\bm{y}| \leq 1$, the same Hölder estimate used for the approximation yields a uniform bound 
by $C|\bm{y}|^{\epsilon - d}$, independent of $k$. 
For $|\bm{y}| > 1$, the tail term
is dominated by the sum of an $L^1$-function coming from 
$\bm{u} \in [L^1(\mathbb{R}^d, \psi)]^d$ (as in the argument above)
and by the integrable tails $C|\bm{y}|^{-d-2s}$ and $C|\bm{y}|^{-d-2s+1}\chi^{(s)}(\bm{y})$.
Hence, there exists a fixed integrable function $\bm{h}(\bm{y})$, independent of $k$, such that the integrand is bounded by $\bm{h}(\bm{y})$ for all $k$. 
Since the integrand converges pointwise to the corresponding integrand at $\bm{x}$ 
(using the continuity of $\bm{u}$ and $D[\bm{u}]$ for $s \geq 1/2$) the Dominated Convergence Theorem implies that 
\[
I(\bm{x}_k) \to I(\bm{x}) \quad \text{for all } \bm{x} \in \Omega.
\] One can iterate the above argument to conclude that if $\bm{u}\in [C^{\infty}_{b}(\mathbb{R}^{d})]$, then so is $\mathbb{L}\bm{u}$.
\end{proof}

\end{document}